\newtheorem{theorem}{Theorem}
\newtheorem{proposition}[theorem]{Proposition}
\newtheorem{lemma}[theorem]{Lemma}
\newtheorem{corollary}[theorem]{Corollary}
\theoremstyle{definition}
\newtheorem{remark}[theorem]{Remark}
\newcommand{\cref}[1]{Corollary~\ref{c.#1}}
\numberwithin{equation}{section}
\numberwithin{theorem}{section}
\newcommand{\Z}{\mathbb{Z}}
\newcommand{\N}{\mathbb{N}}
\newcommand{\R}{\mathbb{R}}
\newcommand{\bT}{\mathbb{T}}
\newcommand{\cS}{\mathcal{S}}
\newcommand{\cD}{\mathcal{D}}
\newcommand{\cK}{\mathcal{K}}
\newcommand{\cL}{\mathcal{L}}
\newcommand{\pv}{\mathrm{p.v.}}
\newcommand{\ran}{\mathrm{ran}}
\DeclareMathOperator{\tr}{tr}
\renewcommand{\tilde}{\widetilde}
\renewcommand{\div}{\mathrm{div}\,}
\newcommand{\ol}{\overline}
\newcommand{\eps}{\varepsilon}
\title{Layer potentials for Lam\'e systems and homogenization of perforated elastic medium with clamped holes}
\author{Wenjia Jing}
\address{Yau Mathematical Sciences Center,
  Tsinghua University,
  No.1 Tsinghua Yuan,
  Beijing 100084, People's Republic of China}
\email{wjjing@tsinghua.edu.cn}
\date{\today}
\begin{document}

\begin{abstract}

We investigate Lam\'e systems in periodically perforated domains, and establish quantitative homogenization results in the setting where the domain is clamped at the boundary of the holes. Our method is based on layer potentials and it provides a  unified proof for various regimes of hole-cell ratios (the ratio between the size of the holes and the size of the periodic cells), and, more importantly, it yields natural correctors that facilitate error estimates. A key ingredient is the  asymptotic analysis for the rescaled cell problems, and this is studied by exploring the convergence of the periodic layer potentials for the Lam\'e system to those in the whole space when the period tends to infinity.

\smallskip

\noindent{\bf Key words}: periodic homogenization, perforated domain, Lam\'e systems, layer potentials, oscillating test function method.

\smallskip

\noindent{\bf Mathematics subject classification (MSC 2010)}: 35B27, 35J08

\end{abstract}

\maketitle

\section{Introduction}

In this paper we are motivated to establish the quantitative homogenization results for the elastostatic problem in a periodically perforated domain where the deformation of the material is prescribed at the boundary of the holes. Let $D^\eps = D^{\eps,\eta} \subseteq \R^d$, $d\ge 2$, model the perforated elastic medium,  obtained by removing a periodic array of identical holes. $\eps$ is the typical distance between neighboring holes, and  $\eta\eps$ is the length scale of each hole; $\eta$ in general depends on $\eps$. Mathematically, the homogenization problem corresponds to the asymptotic analysis of the following Lam\'e system. 
\begin{equation}
\label{eq:hetlame}
\left\{
\begin{aligned}
&-\mathcal{L}^{\lambda,\mu} [u](x) = f(x), &\qquad x\in D^\eps,\\
&u^\eps(x) = 0, &\qquad x \in \partial D^\eps.
\end{aligned}
\right.
\end{equation}
Here, $u^\eps : D^\eps \to \R^d$ is a vector field modeling the displacement field of the material reacting to a forcing field $f$. The differential operator $\cL^{\lambda,\mu}$ is given by
\begin{equation}
\mathcal{L}^{\lambda,\mu} := \mu \Delta + (\lambda + \mu) \nabla \nabla \cdot,
\end{equation}
where $\lambda$ and $\mu$ are the so-called Lam\'e parameters. In this paper they are assumed to be constants and satisfy
\begin{equation*}
\mu > 0, \qquad d\lambda + 2\mu > 0.
\end{equation*}
Note that $\lambda + \mu > 0$ always holds. The material occupied by $D^\eps$ is hence homogeneous but porous. In view of the boundary conditions, the porous elastic body $D^\eps$ has prescribed deformations at the holes. The holes can be realized by inclusions whose deformations can be controlled precisely through some mechanism. As we will see, this Dirichlet type boundary conditions result in various asymptotic regimes for \eqref{eq:hetlame} depending on the smallness of $\eta$ relative to $\eps$.

Partial differential equations in porous media, or more generally in domains with heterogenous geometric features, find many applications in applied physics and engineering, e.g.\;in reservoir engineering, environmental studies, material analysis and design, etc. The mathematical studies also attracted many attentions and produced fruitful results. The literature is enormous, and we only mention a few that are closely related to the homogenization of \eqref{eq:hetlame}. In \cite{CioMur-1}, the scalar conductivity problem in perforated domain with Dirichlet condition on the holes was considered, and the authors there first identified the critical smallness of $\eta$ at which the overall effect of the holes emerges in the homogenization limit. In fact, a ``strange term from nowhere'' appears in the effective equation in the critical setting. Error estimates were also obtained in \cite{Kacimi_Murat}. In \cite{Allaire91-1,Allaire91-2}, Allaire established the corresponding theory for Navier-Stokes system, and further clarified, in \cite{Allaire-3}, the relation between the ``Brinkman term" in the critical setting and the conductivity matrix in the Darcy's law, the latter being the effective model when the holes are much larger than the critical size. In \cite{Jing20}, the author developed a new method based on layer potential techniques and established quantitative homogenization for the scalar conductivity problem in a unified manner for various asymptotic regimes. We extend the approach to Lam\'e system in this paper. Some recent related works on homogenization in perforated domains with Dirichlet conditions on the holes can be found in \cite{LuYong19,GiuVel,Ger-Var,feppon:hal-02518528,feppon:hal-02880030}. We remark that when other boundary conditions such as Neumann, Robin or transmission conditions are imposed at the boundary of the holes/inclusions, the asymptotic behavior could be very different; see e.g. \cite{MR548785,AmmGar,AGGJS,J-CMS,Ger-Var}.

\medskip

As in \cite{Jing20}, our unified homogenization approach utilizes the standard oscillating test function method adapted to perforated domains (see \cite{Tartar}). The building blocks of the oscillating test functions are rescaled from the cell problem. In the classical periodic setting, when $\eta$ is fixed, one uses the ansatz
\begin{equation*}
u^\eps(x) = \left[u_0(x,y) + \eps u_1(x,y) + \eps^2 u_2(x,y) + \cdots\right]_{y = \frac{x}{\eps}},
\end{equation*}
and impose that $u_i$ is $\Z^d$-periodic in $y$ and vanishes for $y$ in the holes. Plugging this in \eqref{eq:hetlame}, replacing $\nabla$ by $\frac1\eps \nabla_y + \nabla_x$, we find, formally, $u^\eps/\eps^2 \approx \sum_k \chi_k(\frac{x}{\eps}) f^k(x)$. The vector field $\chi_k$, for each $k = 1,\dots,d$, is the solution to the cell problem
\begin{equation}
\label{eq:chicell}
-\cL^{\lambda,\mu}[\chi_k](y) = e_k \quad \text{in }\, \bT^d\setminus \eta\ol T, \qquad \chi_k = 0 \quad \text{in } \eta T.
\end{equation}
Here and in the sequel, $\bT^d = \R^d/\Z^d$ is the unit flat torus, and $T$ is the model hole. In view of the Riemann-Lebesgue lemma, we expect that the sequence $\frac{u^\eps}{\eps^2}$ converges weakly to $\langle\chi\rangle f$, where columns of $\langle\chi\rangle$ is the average of $\chi_k$'s in the torus. In the general setting considered in \eqref{eq:hetlame}, the holes are of size $\eta_\eps$ when the periodic cell is rescaled to $\bT^d$, \eqref{eq:chicell} hence still depends on $\eps$ through $\eta_\eps$, and we need to address the asymptotic behavior of $\chi_k(\eps)$ as $\eps$ tends to zero. Equivalently, we can rescale the function and define
\begin{equation*}
\chi^\eta_k(x) = \eta^{d-2}\chi_k(\eta x), \qquad x\in \frac1\eta \bT^d.
\end{equation*}
Then we need to consider the problem
\begin{equation}
\label{eq:chieta}
-\cL^{\lambda,\mu}_y [\chi^\eta_k](y) = \eta^d e_k \quad \text{in } \eta^{-1}\bT^d\setminus \ol T, \qquad \chi^\eta_k = 0 \; \text{ in } T,
\end{equation}
where the hole is at the unit scale and the cell is of size $1/\eta$. To establish quantitative homogenization of \eqref{eq:hetlame}, we need to identify the limit of $\chi^\eta_k$, as $\eta \to 0$, and to quantify the convergence rate of appropriate quantities.

Following the idea of \cite{Jing20}, we carry out those asymptotic analysis through an explicit representation of the solution to \eqref{eq:chieta}. This is obtained by using a particular double-layer potential operator which we introduce now. First we recast the Lam\'e system, $-\cL^{\lambda,\mu}[u] = 0$, as a symmetric and strongly elliptic system of the form
\begin{equation*}
-\partial_i \left(A^{\alpha\beta}_{ij} \partial_j u^\beta\right) = f^\alpha, 
\end{equation*}
where summations over $i,j$ and $\beta$ are taken. Symmetry means $A^{\alpha\beta}_{ij} = A^{\beta\alpha}_{ji}$ and ``strongly elliptic'' means:
\begin{equation*}
A^{\alpha\beta}_{ij}\xi^i\xi^j\zeta^\alpha \zeta^\beta > 0 \; \text{for all non-zero vectors}\; \xi = (\xi^i), \, \zeta = (\zeta^\alpha).
\end{equation*}
It turns out that there are in general infinitely many choice for $(A^{\alpha\beta}_{ij})$ with the above constraints. Each choice of $A$ yields a conormal derivative for $u$ on a surface with normal vector $N$, defined by
\begin{equation*}
\left(\frac{\partial u}{\partial \nu_A}\right)^\alpha = N^i A^{\alpha\beta}_{ij}\partial_j u^\beta.
\end{equation*}
Different choices of conormal derivatives induce different definitions of double-layer potentials. The physically most meaningful choice is 
\begin{equation*}
(A^{(1)})^{\alpha\beta}_{ij} = \lambda \delta_{i\alpha}\delta_{j\beta} + \mu(\delta_{ij}\delta_{\alpha\beta} + \delta_{i\beta}\delta_{j\alpha}),
\end{equation*}
which satisfies the additional symmetry $A^{\alpha\beta}_{ij} = A^{i\beta}_{\alpha j} = A^{\alpha j}_{i\beta}$. It results the conormal derivative
\begin{equation*}
\frac{\partial u}{\partial \nu_{A^{(1)}}} = \lambda (\div u) N + 2\mu \epsilon[u]N, \quad \epsilon[u] = \frac12 (\partial_j u^i + \partial_i u^j).
\end{equation*}
In elasticity theory, $\epsilon[u]$ is called the strain tensor and the conormal derivative above corresponds to the normal stress on the surface. In this paper, however, we use a different choice and set
\begin{equation*}
 A^{\alpha\beta}_{ij} = (\lambda+\mu)\delta_{i\alpha}\delta_{j\beta} + \mu \delta_{ij}\delta_{\alpha\beta},
\end{equation*}
or equivalently, we define the conormal derivative
\begin{equation}
\label{eq:conormal}
\frac{\partial u}{\partial \nu} = (\lambda+\nu)(\div u)N + \mu (\nabla u) N.
\end{equation} 
It turns out that the double-layer potential corresponding to \eqref{eq:conormal} (see the definition \eqref{eq:cDT} below) is more convenient to carry out the approach of \cite{Jing20} to Lam\'e systems, because, as we will see, the Green's identity involving this conormal derivative relates to a bilinear form that controls $\nabla u$ rather than $\epsilon[u]$, and this is a stronger control. The resulted jump formulas for the double-layer potential and for the conormal derivative of the single-layer potential, associated to $\partial T$, involve non-compact operators in $L^2(\partial T)$ even when $T$ has smooth boundary. We overcome this difficulty following the work of \cite{MR769382,MR975122}. With clear characterizations of the mapping properties of those operators and of their periodic variants, we can carry out the quantitative homogenization of \eqref{eq:hetlame}.

The rest of paper is organized as follows. In section \ref{sec:main} we set up the backgrounds for perforated domains and for elastostatic layer potentials, and state the main results of the paper. In section \ref{sec:lp} we study the proposed layer potential operators carefully, show that the trace formulas yield Fredholm  operators although compactness is not available, and establish important invertibility results for them and for their periodic variants. We present sufficient details for all $d\ge 2$. In section \ref{sec:cell} we solve \eqref{eq:chieta} using layer potentials, and, taking advantage of the explicit representation, identify the limits and quantify the convergence rates for various quantities involving the rescaled cell problems. Those results are then used in section \ref{sec:qualh} and in section \ref{sec:quanth}, respectively, to establish the qualitative homogenization results and to quantify the convergence rates. We emphasize again that, in this paper, the two dimensional setting is completely covered by the approach, which is  an improvement of \cite{Jing20}.

\smallskip

\noindent{\bfseries Notations.} We list some notations and conventions that are used throughout the paper. We write $x = (x^i)$ for a vector in $\R^d$, and components are always labeled by $i,j,k$ or $\ell$. The standard inner product on $\R^d$ is written as $x\cdot y$ or $\langle x,y\rangle$. For a vector field $u = (u^i)$, its derivative $\nabla u$ is written as a matrix $(\partial_j u^i)$ with row index $i$ and column index $j$; hence, its transpose $(\nabla u)^t$ has elements $\partial_i u^j$. We always use the summation convention, unless otherwise stated, so repeated index is summed over its range. Hence, the matrix-vector product $(\nabla u)N$ is given by $(N^j\partial_j u^i)$. For matrices $A, B$ of the same dimensions, $A:B = a_{ij}b_{ij}$ is the Frobenius inner product, and $|A|$ denotes the Frobenius norm of $A$; the determinant of a square matrix $A$ is written as $\det(A)$ instead. The tensor product of two vectors, $a$ with $b$, is denoted by $a\otimes b$ and has components $a^ib^j$. For vector fields $u,v$ both in $L^2(D)$ or in $L^2(\partial T)$, we use $\langle u,v\rangle_{L^2}$ to denote their inner product in those functional spaces. Let $E$ be a set with finite measure, $\langle u\rangle_E$ and $\fint_E u$ both denote the average of $u$ in $E$, and the subscript $E$ is often omitted when the reference is clear from the context. Finally, for $r > 0$, $rE$ is the rescaled set $\{rx\,:\,x \in E\}$.
\section{Preliminaries and main results}
\label{sec:main}

\subsection{Geometric set-ups and assumptions} We first present some details about the perforated domain $D^\eps$ and lay down some main assumptions of the paper.

Let $D \subseteq \R^d$ be an open set. Let $Y = Q_1$ denote the unit cube $(-\frac12,\frac12)^d$, and let $T$ be an open subset of $Y$. We assume that $D$ and $T$ satisfy the following assumptions.
\begin{itemize}
\item[(A1)] The set $D$ is open, bounded and simply connected. $T$ is open and, for simplicity, also simply connected.
\item[(A2)] There is an $\alpha \in (0,1)$, so that the boundaries $\partial T$ and $\partial D$ both are of class $C^{1,\alpha}$. 
\item[(A3)] For some $r_1,r_2$, satisfying $0 < r_1 < r_2 < 1/2$, the set $T$ satisfies 
\begin{equation*}
\ol B_{r_1}(0) \subset T, \qquad \ol T \subset B_{r_2}(0).
\end{equation*}
\end{itemize}
In the rest of the paper, if not further specified, the bounding constant $C$ in all estimates depends only on $d,\lambda,\mu$, and on $T$ and $D$ (through $\alpha$, $r_1,r_2$ and the $C^{1,\alpha}$ characterizations of the boundaries). As usual, the same $C$ is used although its value may change all the time.

Let $Y_f = \ol Y\setminus (\eta \ol T)$, then $Y_f$ denotes the perforated cell at the unit scale and it is connected. We view $Y_f$ as the material part and $\eta T$ the removed hole. Note that the boundary of the cube is included in the material. By tessellation, we obtain $\R^d_f := \cup_{z \in \Z^d} (z+Y_f)$, which is $\R^d$ with a periodic array of copies of $T$ removed. We think $\R^d_f$ as the perforated whole space at the unit scale. By rescaling, we get $\eps \R^d_f$ which is the perforated whole space at the $\eps$-scale. Finally, the perforated domain in \eqref{eq:hetlame} is given by
\begin{equation}
D^\eps = D^{\eps,\eta} = D\cap (\eps \R^d_f).
\end{equation}
We check that $D^\eps$ is connected, and $\partial D^\eps$ consists of $(\partial D)\cap \ol D^\eps$ and $\partial(\eps\R^d_f) \cap D$.

Given $\eps$ and $\eta$, there is a unique weak solution $u^\eps \in H^1_0(D^\eps)$ that solves \eqref{eq:hetlame}, or equivalently, satisfies
\begin{equation}
\label{eq:lamewf}
\int_{D^\eps} \mu \nabla u^\eps : \nabla w + (\lambda+\mu)(\div u^\eps)(\div w) = \int_{D^\eps} f\cdot w, \qquad \forall w \in H^1_0(D^\eps).
\end{equation}
This fact follows from the Lax-Milgram theorem with the help from a special form of Poincar\'e inequality (see Theorem \ref{thm:poincare}). For any function $w \in H^1_0(D^\eps)$, we define $\tilde w$ be the zero-extension
\begin{equation}
\label{eq:zeroext}
\tilde w = w \quad \text{in } \, D^\eps, \qquad \tilde w = 0 \quad \text{in } \,\eps(z+\ol \eta T), \, z\in \Z^d. 
\end{equation}
We use this notation for extension of functions on other perforated domains as well, e.g. on $Y_f$, $\frac1\eta \bT^d\setminus \ol T$ etc., and the extension set zero values inside the holes. 

\medskip

Using $w = u^\eps$ in \eqref{eq:lamewf}, one gets
\begin{equation*}
\mu\|\nabla \tilde u^\eps\|^2_{L^2(D)} + (\lambda+\mu)\|\div \tilde u^\eps\|^2_{L^2(D)} \le \|f\|_{L^2}\|\tilde u^\eps\|_{L^2(D)}.
\end{equation*}
By using the usual Poincar\'e inequality for $\tilde u^\eps \in H^1_0(D)$, we can find $C > 0$ such that 
\begin{equation}
\label{eq:uepsbdd}
\|\nabla \tilde u^\eps\|_{L^2(D)} + \|\tilde u^\eps\|_{L^2(D)} \le C\|f\|_{L^2}.
\end{equation}
On the other hand, if we use the Poincar\'e inequality \eqref{eq:poincare}, we also have
\begin{equation}
\label{eq:uepsbddp}
\|\nabla \tilde u^\eps\|_{L^2(D)} \le C\sigma_\eps \|f\|_{L^2}, \qquad \|\tilde u^\eps\| \le C\sigma_\eps \|f\|_{L^2}^2.
\end{equation}
Here $\sigma_\eps$ is defined by
\begin{equation}
\label{eq:sigeps}
\sigma^2_\eps := \begin{cases}
\eps^2 \eta^{-(d-2)}, \qquad &d\ge 3,\\
\eps^2|\log \eta|, \qquad &d =2.
\end{cases}
\end{equation}
In fact, $\sigma_\eps$ is precisely the bounding constant in \eqref{eq:poincare} when this inequality is applied on each of the $\eps$-cubes contained in $D^\eps$. 

\smallskip

\noindent{\bf Asymptotic regimes.} We identify several asymptotic regimes according to the behavior of the hole-cell ratio $\eta = \eta_\eps$ and the factor $\sigma_\eps$. If $\eta$ converges to a positive constant as $\eps \to 0$, then we are in the classical homogenization setting and the holes occupy a positive volume fraction in the limit. On the other hand, if $\eta = \eta_\eps \to 0$, we say the holes are dilute or their volume fraction is vanishing. 

In this dilute setting, we further identify three sub-cases. If $\sigma_\eps$ converges to a positive number $\sigma_0$ as $\eps \to 0$, we call it the \emph{critical} setting (of hole-cell ratios). In this setting, the size of the holes is critically small compared to the size of cells, which is also the distance of neighboring holes. It is at this critical setting that the asymptotic effect of the holes emerges. If $\sigma_\eps \to \infty$, we call it the \emph{sub-critical} setting; in this case, the holes are of smaller order and their effects can be neglected in the limit. If $\sigma_\eps \to 0$, we call it the \emph{super-critical} setting; the holes are of larger order and their asymptotic effect is more dramatic.

Clearly, \eqref{eq:uepsbddp} is a stronger estimate for the super-critical setting, and \eqref{eq:uepsbdd} is the better one for sub-critical holes. 

\medskip

\subsection{Elastostatic layer potentials} A main ingredient of our analysis is the layer potential theory for Lam\'e systems. It not only provides representations for the solution of \eqref{eq:chieta} but also explains the parameters that enter the effective models for \eqref{eq:hetlame}, for all dilute regimes and for all $d\ge 2$.

Let $e_k$, $k=1,2,\cdots,d$, denote the standard orthonormal basis of $\R^d$. For each $k$, the fundamental solution $\Gamma_k = (\Gamma^j_k)_j$ to the problem
\begin{equation}
\label{eq:GamEq}
\mathcal{L}^{\lambda,\mu}[\Gamma_k] = \mu \Delta \Gamma_k(x) + (\lambda+\mu) \nabla \nabla \cdot \Gamma_k(x) = \delta_0(x) e_k, \qquad \text{in } \R^d,
\end{equation}
subject to decay condition ($d\ge 3$) or logarithmic growth condition ($d=2$), is given by the following explicit formula:
\begin{equation}
\label{eq:Gamma}
\Gamma_{k}^j(x) = \left\{\begin{aligned}
\frac{c_1}{(2-d)\omega_d} \frac{\delta_{jk}}{|x|^{d-2}} - \frac{c_2}{\omega_d} \frac{x^j x^k}{|x|^{d}}, \qquad &d\ge 3,\\
\frac{c_1}{2\pi} (\log|x|)\delta_{jk} - \frac{c_2}{\omega_d} \frac{x^j x^k}{|x|^{d}}, \qquad &d=2,
\end{aligned}
\right.
\end{equation}
where $c_1$ and $c_2$ are two constants defined by
\begin{equation*}
c_1 = \frac{1}{2} \left( \frac{1}{\mu} + \frac{1}{\lambda+2\mu} \right), \quad c_2 = \frac{1}{2} \left( \frac{1}{\mu} - \frac{1}{\lambda+2\mu} \right).
\end{equation*}
The formulas above provide the unique (for $d=2$, up to unimportant additive constants) solution to \eqref{eq:GamEq} with conditions at infinity.


\medskip

Let $T \subseteq \R^d$ be an open set satisfying assumptions (A1) and (A2). The standard \emph{single-layer} potential for Lam\'e system, with momentum $\phi \in L^2(\partial T)$, is defined, through its components, by
\begin{equation}
\label{eq:cST}
(\cS_T[\phi])^k(x) = \int_{\partial T} \Gamma_k(x;y) \cdot \phi(y) dy, \qquad x\in \R^d\setminus \partial T.
\end{equation}
We denote the exterior domain $\R^d\setminus \ol T$ by $T_+$, and, also write $T_- = T$ sometime to emphasize the contrast with $T_+$. It can be checked directly that $\cL^{\lambda,\mu}[\cS_T[\phi]] = 0$ in $T_\pm$. Moreover, $w = \cS_T[\phi]$ is smooth in $T_\pm$ and verifies the decay condition:
\begin{equation}
\label{eq:cSTdecay}
|w(x)| = O(|x|^{-d+2}) \quad \text{for }\,d\ge 3, \qquad |\nabla w(x)| = O(|x|^{-d+1}) \quad \text{for }\, d\ge 2, \qquad \text{as } |x| \to \infty.
\end{equation}
The decay of $|w(x)|$ does not hold for $d=2$ in general, but we have $|w(x)| = O(|x|^{-1})$ at infinity if $\phi \in L^2_0(\partial T)$. Here and in the sequel, $L^2_0(\partial T)$ denotes the subspace of $L^2(\partial T)$ that consists of mean-zero functions.


As mentioned in the Introduction, to define double-layer potentials, we need to fix a conormal derivative. Throughout the paper, we adopt \eqref{eq:conormal}. Then for vector fields $u,v$ in $T$ with sufficient regularity, we have the Green's identity
\begin{equation}
\label{eq:GreenI}
\int_{\partial T} v\cdot \frac{\partial u}{\partial \nu} = \int_T \mu \nabla v:\nabla u + (\lambda+\mu)(\nabla\cdot v)(\nabla\cdot u) + \int_T v \cdot \cL^{\lambda,\mu}[u].
\end{equation}
By switching $u$ and $v$, we also have
\begin{equation}
\label{eq:GreenII}
\int_{\partial T} v\cdot \frac{\partial u}{\partial \nu} - u\cdot \frac{\partial v}{\partial \nu} = \int_T v \cdot \cL^{\lambda,\mu}[u] -  u \cdot \cL^{\lambda,\mu}[v].
\end{equation}
Moreover, \eqref{eq:GreenI} still holds on $T_+$, if $|u(x)||\nabla v(x)|$ is of order $o(|x|^{-d+1})$.

Those Green's identities suggest us to define the \emph{double-layer} potential, with momentum $\phi$, by
\begin{equation}
\label{eq:cDT}
(\cD_T[\psi])^k(x) = \int_{\partial T} \frac{\partial \Gamma_k}{\partial \nu_y}(x;y) \cdot \psi(y) dy, \qquad x\in \R^d\setminus \partial T.
\end{equation}
The subscript $\nu_y$ emphasizes that the derivatives in \eqref{eq:conormal} are taken for the $y$-variable. Direct computations on \eqref{eq:Gamma} show that the integral kernel, written as $K(x;y)$ with components $K_{ik}(x;y)$, is given by
\begin{equation*}
\begin{aligned}
K_{ik}(x;y):= \left(\frac{\partial{\Gamma_k}}{\partial \nu_y}(x;y)\right)^i =& -\frac{\mu c_1}{\omega_d} \frac{\langle N_y, x-y\rangle \delta_{ik}}{|x-y|^d} - \frac{d\mu c_2}{\omega_d} \frac{\langle N_y,x-y\rangle (x-y)^i(x-y)^k}{|x-y|^{d+2}} \\
&\;+ \frac{\mu c_2}{\omega_d} \frac{(x-y)^iN_y^k - (x-y)^k N^i_y}{|x-y|^d}.
\end{aligned}
\end{equation*}
Again, $\cD_T[\psi]$ are smooth vector fields and satisfy the homogeneous Lam\'e systems on $T_\pm$. It is also clear that $|\cD_T[\phi]| = O(|x|^{-d+1})$ at infinity, for all $d\ge 2$. 

We use $K(x;y)$, $x,y \in \partial T$, as the integration kernel and define, for $k=1,\dots,d$,
\begin{equation}
\label{eq:cKT}
(\cK_T[\psi])^k(x) = \pv \int_{\partial T} K_{ik}(x;y)\psi^i(y), \qquad x\in \partial T.
\end{equation}
We need to take the principal value integral because of the last term in the formula of $K_{ik}$. In fact, the the other terms are absolutely integrable in $y$ uniformly in $x$, because $\partial T \in C^{1,\alpha}$ implies
\begin{equation}
\label{eq:C1alphaN}
\langle x-y, N_x\rangle \le C|x-y|^{1+\alpha}, \qquad |N_x - N_y| \le C|x-y|^\alpha, \qquad \forall \, x, y \in \partial T.
\end{equation}
Contributions of those terms form a compact operator on $L^2(\partial T)$. The last term, however, is not integrable even for smooth $\partial T$. As a result, $\cK_T$ is a genuine singular integral. Invoking classical theory on singular integrals, namely \cite{MR672839}, we confirm that $\cK_T$ is a bounded linear operator on $L^2(\partial T)$.

\medskip

\noindent{\bfseries Trace formulas.} Layer potential operators are useful to solve boundary value problems for Lam\'e systems because their traces on $\partial T$, or more precisely, their non-tangential limits on $\partial T$ from $T_-$ or $T_+$, can be computed. In the sequel, for a function $F$ defined on $T_-$ and $T_+$, we use the notation
\begin{equation*}
F\rvert_\pm(x) = \lim_{t\to 0+} F(x\pm tN_x), \qquad x\in \partial T,
\end{equation*}
provided that the limit exists. In other words, $F\rvert_-$ is the limit from the inside of $T$, and $F\rvert_+$ is the limit from the exterior of $T$. For the single-layer potential defined in \eqref{eq:cST} and for the conormal derivative in \eqref{eq:conormal}, it is known (see \cite{MR975122}) that
\begin{equation}
\label{eq:traceu}
\partial_i u^j_\pm(x) = \pm\left\{\frac{1}{2\mu} N^i_x \phi^j(x) - \gamma_2 N^i_x N^j_x N_x \cdot \phi(x) \right\} 
+ \pv \int_{\partial T} \partial_i \Gamma_{j}^k(x-y) \phi^k(y) dy. 
\end{equation}
Plug this formula in the definition of the conormal derivative, we get
\begin{equation}
\label{eq:traceuN}
\begin{aligned}
\frac{\partial\,\cS_T[\phi]}{\partial \nu}\Big\rvert_\pm (x) &= \pm \frac12 \phi(x) + \pv \int_{\partial T} \left(\mu \partial_j \Gamma^i_k(x-y)N^j_x + (\lambda + \mu)(\div \Gamma_k)(x-y) N^i_x\right) \phi^k(y)\\
&= \pm \frac12 \phi(x) + \cK^*_T[\phi](x).
\end{aligned}
\end{equation}
For the double-layer potential defined in \eqref{eq:cDT}, we have
\begin{equation}
\label{eq:jumpD}
\cD_T[\phi]\rvert_\pm (x) = (\mp \frac12 I + \cK_T)[\phi](x), \qquad \text{in } \partial T.
\end{equation}
In the second line of \eqref{eq:traceuN}, we recognized the integral operator as the adjoint of $\cK_T$ defined in \eqref{eq:cKT}. Indeed, the singular integral operator in the first line of \eqref{eq:traceuN} can be written as
\begin{equation*}
\cK^*_T[\phi](x) = \pv \int_{\partial T} K^*_{ik}(x;y) \phi^i(y),
\end{equation*}
and explicit computation shows
\begin{equation*}
K^*_{ik}(x;y) = K_{ki}(y;x).
\end{equation*}
Both $\cK_T$ and $\cK^*_T$ are bounded linear transformations on $L^2(\partial T)$, but they are not compact. Nevertheless, we can compute and check that
\begin{equation}
\label{eq:Kdifference}
\begin{aligned}
K^*_{ik}(x;y) - K_{ik}(x;y) =& \frac{\mu c_1}{\omega_d} \frac{\langle x-y, N_x+N_y\rangle \delta_{ik}}{|x-y|^d} + \frac{dc_2 \mu}{\omega_d} \frac{(x-y)^i(x-y)^k \langle x-y,N_x+N_y\rangle}{|x-y|^{d+2}} \\
&\qquad + \frac{\mu c_2}{\omega_d} \frac{(x-y)^i (N_x-N_y)^k - (x-y)^k (N_x-N_y)^i}{|x-y|^d}.
\end{aligned}
\end{equation}
Thanks to \eqref{eq:C1alphaN}, the function above is integrable in $y$ over $\partial T$, uniformly for $x\in \partial T$. As a result, $\cK^*_T - \cK_T$ is a compact operator on $L^2(\partial T)$. Finally, we also know that $\cS_T[\phi]\rvert_+$ and $\cS_T[\phi]\rvert_-$ agree on $\partial T$, and agree with \eqref{eq:cST} with $x \in \partial T$. Moreover, the tangential derivative of $\cS_T$ on $\partial T$, i.e. the traces of $\tau_x \cdot \nabla \cS_T[\phi]$ from $T_+$ and $T_-$, where $\tau_x$ belongs to the tangent space $X_x(\partial T)$ of $\partial T$ at $x\in \partial T$. This can checked directly from the trace formula \eqref{eq:traceu}.

In section \ref{sec:lp}, we will introduce the periodic variants of the above layer potentials, and use them to solve and analyze \eqref{eq:chieta}.  

\subsection{Main results}

The first main result of the paper concerns some mapping properties of the operators $-\frac12 I + \cK_T$ and $-\frac12 I + \cK^*_T$,  which appear in the trace formula \eqref{eq:jumpD}. 

\begin{lemma}
\label{lem:key}
Suppose $d\ge 2$, $T \subseteq \R^d$ is an open bounded set satisfying (A1) and (A2). Then the operators $-\frac12 I + \cK_T$ and $-\frac12 I + \cK^*_T$, as bounded linear transformations on $L^2(\partial T)$, satisfy the following properties.
\begin{itemize}
\item[(1)] The ranges of the operators are closed, and both of their kernels have dimension $d$. Moreover, $\ker(-\frac12 I + \cK_T)$ is the subspace of constant vector fields over $\partial T$.
\item[(2)] The direct sum decomposition $L^2(\partial T) = \ran(-\frac12 I + \cK_T) \oplus \ker(-\frac12 I + \cK_T)$ holds.
\end{itemize}
\end{lemma}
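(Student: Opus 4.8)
The plan is to deduce the statement from Fredholm theory combined with the feature, special to the conormal derivative \eqref{eq:conormal}, that its Green bilinear form controls the full gradient. The main obstacle is the Fredholmness of the operators, because $\cK_T$ is genuinely singular (its last kernel term is not compact even for smooth $\partial T$), so one cannot simply invoke the Fredholm alternative for ``$-\tfrac12 I + \text{compact}$''. Following \cite{MR769382,MR975122}, I would establish directly that $-\tfrac12 I + \cK_T$ is Fredholm on $L^2(\partial T)$: for instance, by computing the principal symbol of $\cK_T$ on the $C^{1,\alpha}$ hypersurface $\partial T$ and checking that its non-compact part, which by \eqref{eq:Kdifference} is self-adjoint modulo compacts, has a real symbol whose eigenvalues are $0$ and $\pm\tfrac14\tfrac{\lambda+\mu}{\lambda+2\mu}$, all of modulus $<\tfrac12$, so that $-\tfrac12 I + (\text{symbol})$ is everywhere invertible and the operator is elliptic; alternatively, a Rellich–Ne\v{c}as identity for $u=\cD_T[\phi]$ gives an a priori estimate $\|\phi\|_{L^2(\partial T)}\le C\|(-\tfrac12 I + \cK_T)\phi\|_{L^2(\partial T)} + \|\mathfrak{C}\phi\|_{L^2(\partial T)}$ with $\mathfrak{C}$ compact (and likewise for $\cK_T^\ast$), yielding closed range and finite-dimensional kernel. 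Once Fredholmness is in hand, note that $(-\tfrac12 I + \cK_T)^\ast = -\tfrac12 I + \cK_T^\ast$ in $L^2(\partial T)$ (since $K_{ik}^\ast(x;y)=K_{ki}(y;x)$), so $-\tfrac12 I + \cK_T^\ast$ is also Fredholm with $\operatorname{ind}(-\tfrac12 I + \cK_T^\ast) = -\operatorname{ind}(-\tfrac12 I + \cK_T)$; but $\cK_T^\ast - \cK_T$ is compact by \eqref{eq:Kdifference}, so the two operators differ by a compact perturbation and have the same index, forcing $\operatorname{ind}(-\tfrac12 I + \cK_T) = 0$. Hence both operators have closed range and $\dim\ker(-\tfrac12 I+\cK_T)=\dim\ker(-\tfrac12 I+\cK_T^\ast)=\operatorname{codim}\ran(-\tfrac12 I+\cK_T)$.

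It then remains to identify $\ker(-\tfrac12 I+\cK_T)$. For a constant vector field $c$, the Green representation formula (obtained from \eqref{eq:GreenII} with the fundamental solution, applied to $u\equiv c$, for which $\partial c/\partial\nu = 0$) gives $\cD_T[c]\equiv 0$ in $T_+$, hence $(-\tfrac12 I+\cK_T)c = \cD_T[c]\rvert_+ = 0$ by \eqref{eq:jumpD}; so constants lie in the kernel. Conversely, suppose $(-\tfrac12 I+\cK_T)\phi = 0$ and set $w = \cD_T[\phi]$. Then $w\rvert_+ = 0$; since $w$ solves the homogeneous Lamé system in $T_+$ with $|w(x)|=O(|x|^{1-d})$ and $|\nabla w(x)|\to 0$, the identity \eqref{eq:GreenI} is valid on $T_+$ and forces $\int_{T_+}\mu|\nabla w|^2 + (\lambda+\mu)(\div w)^2 = 0$, so $\nabla w\equiv 0$ and, by the decay, $w\equiv 0$ in $T_+$. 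The jump relations for $\cD_T$ give $w\rvert_- = \phi$, and the continuity across $\partial T$ of the conormal derivative of the double layer (a companion trace formula, established together with \eqref{eq:jumpD} in Section~\ref{sec:lp}) gives $(\partial w/\partial\nu)\rvert_- = (\partial w/\partial\nu)\rvert_+ = 0$; applying \eqref{eq:GreenI} on $T_-$ then yields $\int_{T_-}\mu|\nabla w|^2 + (\lambda+\mu)(\div w)^2 = \int_{\partial T}\phi\cdot(\partial w/\partial\nu)\rvert_- = 0$, whence $\nabla w\equiv 0$ in $T_-$ and $\phi = w\rvert_-$ is constant. It is precisely here that the choice \eqref{eq:conormal} matters: its Green form controls $\nabla w$, so the kernel collapses to constants, whereas the classical stress conormal would only give $\epsilon[w]=0$ and a kernel of rigid motions. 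This identifies $\ker(-\tfrac12 I+\cK_T)$ with the $d$-dimensional space of constant vector fields, which, together with the index-$0$ property, establishes all of part (1).

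For part (2), recall that for a Fredholm operator $A := -\tfrac12 I + \cK_T$ of index $0$ the decomposition $L^2(\partial T)=\ran A\oplus\ker A$ holds if and only if $\ker A^2 = \ker A$. So let $A^2\phi = 0$; then $A\phi\in\ker A$, i.e.\ $A\phi = c$ for some constant vector $c$. With $w = \cD_T[\phi]$ we have $w\rvert_+ = c$, and \eqref{eq:GreenI} on $T_+$ (again valid thanks to the $O(|x|^{1-d})$ decay of the double layer, which holds for every $d\ge 2$) gives $\int_{T_+}\mu|\nabla w|^2 + (\lambda+\mu)(\div w)^2 = -\int_{\partial T} w\rvert_+\cdot(\partial w/\partial\nu)\rvert_+ = -\,c\cdot\int_{\partial T}(\partial w/\partial\nu)\rvert_+$. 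The total conormal flux $\int_{\partial T}(\partial w/\partial\nu)\rvert_+$ vanishes: the conormal stress of $w$ is divergence-free because $\cL^{\lambda,\mu}[w]=0$, so the divergence theorem on $T_+\cap B_R$ equates this flux with the one over $\partial B_R$, which tends to $0$ as $R\to\infty$ since $\nabla w(x)\to 0$. Hence $\nabla w\equiv 0$ and, by the decay, $w\equiv 0$ in $T_+$, so $c = w\rvert_+ = 0$ and $A\phi = 0$; thus $\ker A^2 = \ker A$ and the direct sum decomposition follows. All of this is dimension-independent: in $d=2$ the double-layer potential still decays at infinity (unlike the single-layer one), which is exactly why the two-dimensional case is covered with no separate treatment.
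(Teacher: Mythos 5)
Your overall architecture — establish that $-\frac12 I + \cK_T$ is Fredholm, deduce index zero from compactness of $\cK_T^* - \cK_T$, identify the kernel directly, and get the direct sum from $\ker A^2 = \ker A$ — is sound and, if completed, would be more streamlined than the paper's. But there is a real gap at the step you treat most lightly: the Fredholmness of $-\frac12 I + \cK_T$ (closed range plus finite-dimensional kernel). You offer two sketches — a principal-symbol computation and a Rellich--Ne\v{c}as estimate for $u = \cD_T[\phi]$ — and carry out neither. This is not a cosmetic omission; it is the technical heart of the lemma and precisely where the non-compactness of $\cK_T$ bites. The paper devotes most of Section~\ref{sec:lp} to this point, through the Rellich inequality of Lemma~\ref{lem:tractionbdd} applied to the \emph{single-layer} potential $\cS_T[\phi]$, followed by the contradiction-based closed-range argument of Lemma~\ref{lem:cKsL20} (Step~3); that then yields closed range for the adjoint $-\frac12 I + \cK_T$ by duality, and Lemma~\ref{lem:semifred} supplies the dimension count. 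Your claimed a priori bound $\|\phi\| \le C\|(-\tfrac12 I + \cK_T)\phi\| + \|\mathfrak{C}\phi\|$ does not follow in a few lines from \eqref{eq:rellich2}: a Rellich argument for $w = \cD_T[\phi]$ naturally compares tangential gradients $\nabla_t w\rvert_\pm$ and hinges on the continuity of $\partial w/\partial\nu$ across $\partial T$ — a trace formula which, contrary to your parenthetical, is \emph{not} stated or proved anywhere in Section~\ref{sec:lp} (it is standard, cf.\ \cite{MR975122}, but should be cited as such rather than attributed to the paper). Until either sketch is actually carried out, item~(1) is unproved.

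The portions you do work out are correct and take a genuinely different route from the paper's. Your direct identification of $\ker(-\tfrac12 I + \cK_T)$ with constants — via the Green's identity on $T_+$ (valid for all $d\ge 2$ because $\cD_T[\phi]$ decays, unlike $\cS_T[\phi]$), uniqueness for the exterior Dirichlet problem, and the flux/Green argument on $T_-$ — is more transparent than the paper's indirect count, which only proves $\mathbb{V}_0\subseteq\ker(-\frac12 I + \cK_T)$ and then pins down the dimension by showing $-\frac12 I + \cK^*_T$ is injective on $L^2_0(\partial T)$ and invoking Lemma~\ref{lem:semifred}. Likewise, using $\ker A^2 = \ker A$ for item~(2), with the vanishing of the total conormal flux $\int_{\partial T}\partial_\nu \cD_T[\phi]\rvert_+$ doing the work, is a clean replacement for the paper's route through the decomposition of $L^2$ with respect to the adjoint operator. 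Your remark that the whole argument is dimension-free because the double layer decays even when $d=2$, and your observation about why the conormal \eqref{eq:conormal} forces the kernel to be constants rather than rigid motions, are both exactly the right points to make. So the identification-of-kernel and part~(2) arguments are a worthwhile alternative to the paper's; the Fredholmness step is the one you still owe.
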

Those results are proved in section \ref{sec:proofkey}. It will be shown that we can find $\phi^*_1,\dots,\phi^*_d$ so that they form a basis for the kernel of $-\frac12 I + \cK^*_T)$, and they satisfy
\begin{equation*}
\int_{\partial T} \phi^*_j = e_j, \qquad -\cS_T[\phi^*_i] = a^*_j \quad\text{on } \ol T, \qquad j = 1,\dots,d.
\end{equation*}
Let $A_T$ be the matrix defined by
\begin{equation}\label{eq:ATdef}
A_T = ((a^*_j)^i) = \begin{bmatrix}
a^*_1 & a^*_2 & \cdots & a^*_d
\end{bmatrix}.
\end{equation}
We will show that $A_T$ is symmetric, and $A_T$ is positive definite for $d\ge 3$. For $d=2$, due to the abnormal rescaling property of $\Gamma_k$ in \eqref{eq:Gamma}, the matrix $A_T$ could be degenerate; however, when the homogenization of \eqref{eq:hetlame} is concerned, we can always assume (see Remark \ref{rem:AT2d}) that
\begin{equation}
\label{eq:AT2d}
\det\, A_T \ne 0.
\end{equation}
The decomposition in item (2) of Lemma \ref{lem:key} is easily done using $\phi^*_j$'s above; see Lemma \ref{lem:L2decom}.

\medskip

Now we state our main results concerning the homogenization of \eqref{eq:hetlame}. We define the matrix
\begin{equation}
\label{eq:Mdef}
M = M_T := \begin{cases}\left.
\begin{aligned}
A^{-1}_T \qquad &\text{if } d\ge 3\\
\frac{c_1}{2\pi} I \qquad &\text{if } d=2
\end{aligned}
\right\} \qquad &\text{in the dilute setting},\\
\smallskip
\left(\fint_Y \chi_j^i\right)^{-1} \qquad &\text{in the classical setting}.
\end{cases}
\end{equation}
In the classical setting, $\eta$ is essentially a fixed parameter, and the problem is in the super-critical setting. The cell problem \eqref{eq:chicell} does not depend on $\eps$, and no further asymptotic analysis is needed. Note that $M$ defined above is positive definite (see Proposition \ref{prop:AT}).

\begin{theorem}
\label{thm:qualh}
Assume $d\ge 2$, assume (A1)(A2) and (A3) holds. For each $\eps \in (0,1)$, let $u^\eps$ be the unique solution of \eqref{eq:hetlame} and $\tilde u^\eps$ be the zero extension, and assume $f \in L^2(D)$. Let $\sigma_\eps$ be defined by \eqref{eq:sigeps}. Then the following holds as $\eps \to 0$.
\begin{itemize}
\item[(1)] In the super-critical setting, i.e. when $\sigma_\eps \to 0$, the zero extension function $\frac{\tilde u^\eps}{\sigma^2_\eps}$ converges weakly to $u$ in $L^2(D)$, with $u = M^{-1}f$.

\item[(2)] In the critical setting, i.e. $\sigma_\eps \to \sigma_0$ for some positive real number $\sigma_0$, the sequence $\tilde u^\eps$ converges weakly in $H^1_0(D)$ to $u$, which is given by the unique solution to the problem
\begin{equation}
\label{eq:homcr}
-\cL^{\lambda,\mu} [u] + \frac{M}{\sigma^2_0} u = f \quad \text{in }\, D, \qquad u = 0 \quad \text{in }\, \partial D.
\end{equation}

\item[(3)] In the sub-critical setting, i.e. $\sigma_\eps \to \infty$, the sequence $\tilde u^\eps$ converges weakly in $H^1_0(D)$ to $u$, which is given by the unique solution to the unperturbed problem
\begin{equation}
\label{eq:homsubc}
-\cL^{\lambda,\mu} [u]  = f \quad \text{in }\, D, \qquad u = 0 \quad \text{in }\, \partial D.
\end{equation}
\end{itemize}
\end{theorem}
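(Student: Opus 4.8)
The plan is to prove \tref{qualh} by the oscillating test function method of Tartar, as adapted to perforated domains, using the layer-potential machinery of \sref{lp} and \sref{cell} to construct the correctors. The starting point is the uniform bound \eref{uepsbdd}–\eref{uepsbddp}: in all three regimes these give, after passing to a subsequence, a weak limit $u$ (of $\tilde u^\eps/\sigma^2_\eps$ in the super-critical case, of $\tilde u^\eps$ in $H^1_0(D)$ in the critical and sub-critical cases); one must identify the limiting equation and show uniqueness, so that the whole sequence converges. The heart of the matter is the construction of a suitable family of oscillating test functions $w^\eps_k = w^{\eps,\eta}_k$ built by rescaling the solution $\chi^\eta_k$ of the rescaled cell problem \eref{chieta}. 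Concretely I would set $w^\eps_k(x) = \sigma^2_\eps\, \eta^{-(d-2)}\chi^\eta_k(x/(\eps\eta))$ (with the logarithmic analogue for $d=2$), zero-extended inside the holes, so that $w^\eps_k$ vanishes on $\partial(\eps\R^d_f)$, solves $-\cL^{\lambda,\mu}[w^\eps_k] = g^\eps_k$ in $D^\eps$ for an explicitly computable right-hand side, and — this is what \sref{cell} delivers — satisfies $w^\eps_k \to M^{-1}e_k$ (equivalently $Mw^\eps_k \to e_k$) weakly in $L^2$, with $\nabla w^\eps_k$ controlled in $L^2$ and a precise identification of the weak limit of the associated conormal fluxes on the hole boundaries.

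The key computation is then the standard one: for $\varphi \in C_c^\infty(D)$ and a fixed $k$, test the weak formulation \eref{lamewf} of \eref{hetlame} with $\varphi\, w^\eps_k$ (which lies in $H^1_0(D^\eps)$ once one multiplies by a cutoff handling $\partial D$), and simultaneously test the weak formulation of the equation for $w^\eps_k$ with $\varphi\, \tilde u^\eps$. Subtracting the two identities produces, after the Leibniz rule, an expression in which the symmetric bilinear-form terms $\int \mu\nabla w^\eps_k:\nabla(\varphi \tilde u^\eps) + (\lambda+\mu)(\div w^\eps_k)(\div(\varphi\tilde u^\eps))$ cancel against their counterparts; what survives are (i) terms where $\nabla\varphi$ hits $w^\eps_k$ or $\tilde u^\eps$, which pass to the limit by the weak convergences plus strong convergence of $\varphi$-derivatives via a div-curl / compensated compactness argument, (ii) the forcing term $\int f\cdot\varphi w^\eps_k \to \int f\cdot\varphi\, M^{-1}e_k$, and (iii) the ``strange term'' coming from the right-hand side $g^\eps_k$ of the cell equation tested against $\varphi\tilde u^\eps$. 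The asymptotics of \sref{cell} must be invoked precisely here: in the critical regime $\sigma_\eps\to\sigma_0$ the distributional limit of $g^\eps_k$ acting on test functions produces the zeroth-order term $\sigma_0^{-2}(Mu)_k$ in \eref{homcr}; in the sub-critical regime the normalization forces $g^\eps_k \to 0$ in the relevant sense, giving \eref{homsubc}; in the super-critical regime the rescaling by $\sigma^2_\eps$ instead makes the gradient terms disappear in the limit and leaves the algebraic relation $Mu = f$, i.e. $u = M^{-1}f$. One then reads off the limiting equation for $u$ by letting $\varphi$ vary, uses the positive-definiteness of $M$ (\pref{AT}, and $M = (\lambda+\mu)$-elliptic in the obvious sense) together with Lax–Milgram to get uniqueness of $u$, and concludes that the full sequence converges.

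I expect the main obstacle to be item (iii) above: correctly extracting the ``strange term from nowhere'' and showing that the cross terms involving $\nabla w^\eps_k$ and $\nabla\tilde u^\eps$ localized near the holes converge to the right measure. This is delicate for two reasons. First, $\nabla w^\eps_k$ concentrates on a neighborhood of the holes of size $\sim \eps\eta$, so one cannot pass to the limit by naive weak convergence of gradients; one must use the special structure of $\chi^\eta_k$ — in particular that, away from the hole, $w^\eps_k$ is close to the harmonic-type profile governed by the single-layer potential $\cS_T[\phi^*_k]$, with the conormal derivative on $\partial T$ identified by the trace formula \eref{traceuN} and the kernel analysis of \lref{key} — to compute the limit of $\int_{\partial(\eps\R^d_f)\cap D}\varphi\,\tilde u^\eps\cdot \partial_\nu w^\eps_k$ via an integration by parts that replaces the boundary integral by a volume integral amenable to the Riemann–Lebesgue lemma. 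Second, the estimate \eref{C1alphaN} and the $C^{1,\alpha}$-hypothesis are needed to control the error between $\chi^\eta_k$ and its layer-potential approximation uniformly as $\eta\to 0$, and one must be careful that all constants are $\eta$-independent; the two-dimensional case additionally requires the nondegeneracy \eref{AT2d} to make $M_T$ invertible. A secondary technical point is the treatment of the outer boundary $\partial D$: since $w^\eps_k$ does not vanish there, one multiplies by a cutoff $\theta_\delta$ supported away from $\partial D$, proves the estimates with the cutoff, and then removes it by letting $\delta\to 0$, using \eref{uepsbdd} to control the error near $\partial D$. Once these points are handled, the passage to the limit is routine.
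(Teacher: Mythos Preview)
Your overall strategy is the same as the paper's: Tartar's oscillating test function method, using rescalings of the cell problem $\chi^\eta_k$ as the correctors, subtracting the two weak formulations so that the symmetric bilinear-form terms cancel, and then passing to the limit via the asymptotics of \sref{cell}. So the approach is correct.

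That said, you are anticipating difficulties that the paper's normalization simply dissolves, and it is worth seeing why. In the paper, the corrector $v^\eps_k$ (your $w^\eps_k$, up to a harmless scaling that you have slightly mis-stated) is chosen so that it solves $-\cL^{\lambda,\mu}[v^\eps_k]=\sigma_\eps^{-2}e_k$ \emph{as a genuine PDE in the perforated set} $\eps\R^d_f$, not merely away from the holes with a singular measure on $\partial T$. Consequently, when you test the cell equation against $\varphi\tilde u^\eps$ there is no boundary term on $\partial(\eps\R^d_f)$ at all: the ``strange term'' appears directly as $\int_D \varphi\, e_k\cdot(\tilde u^\eps/\sigma_\eps^2)$, and your item (iii) — the limit of $\int_{\partial(\eps\R^d_f)\cap D}\varphi\,\tilde u^\eps\cdot\partial_\nu w^\eps_k$ via layer-potential identities — never arises. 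Likewise, no compensated-compactness or div-curl argument is needed: the paper proves (Lemma \ref{lem:vasym} and the quantitative \eqref{eq:vstrbdd}) that $v^\eps_k\to M^{-1}e_k$ \emph{strongly} in $L^p_{\mathrm{loc}}$, so the surviving cross terms in \eqref{eq:wibp} are products of one weakly and one strongly convergent factor and pass to the limit trivially. Finally, no cutoff near $\partial D$ is required: $v^\eps_k$ is periodic on all of $\eps\R^d_f$, so $\varphi v^\eps_k\in H^1_0(D^\eps)$ already since $\varphi\in C^\infty_c(D)$. The layer-potential machinery you cite is used, but only \emph{upstream}, inside the proof of the strong $L^p$ convergence \eqref{eq:vstrbdd}; once that is in hand, the homogenization step itself is elementary bookkeeping with the identity \eqref{eq:wibp}.
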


The classical setting is included in item (1). It can be proved following the standard arguments in \cite{BLP}. In fact, we show that results in the other settings can be proved following the same arguments, except an additional asymptotic analysis for \eqref{eq:chieta} is needed. Those proofs are presented in section \ref{sec:qualh} below. An advantage of our method is that, it can be quantified relatively easily. This is addressed by the next main theorem.

\begin{theorem}
\label{thm:quanh}
Suppose that the assumptions of Theorem \ref{thm:qualh} hold, and $\eta \to 0$ as $\eps 0$. Let $v^\eps_k$'s be defined by \eqref{eq:vepsdef}. Assume further that the limiting function $u$ of Theorem \ref{thm:qualh}, in each regimes, satisfies: $u \in W^{2,d}(D)$ for $d\ge 3$ and $u \in W^{2,\infty}(D)$ for $d = 2$. Then the following, stated first for $d\ge 3$, holds: 
\begin{itemize}
\item[(1)] In the dilute super-critical setting, there exists $C > 0$ so that for all $\eps$ sufficiently small,
\begin{equation}
\|\frac{\tilde u^\eps}{\sigma^2_\eps} - f^k(x)v^\eps_k(x)\|_{H^1(D)} + \frac{1}{\sigma_\eps} \|\frac{\tilde u^\eps}{\sigma^2_\eps} - f^k(x)v^\eps_k(x)\|_{L^2(D)} \le C(\sigma_\eps + \eta^{\frac{d-2}{2}})\|f\|_{W^{2,d}}
\end{equation}
\item[(2)] In the critical setting, and suppose $\sigma_\eps \to \sigma_0$ for some $\sigma_0 \in (0,\infty)$, then there exists $C > 0$ so that for all $\eps$ sufficiently small,
\begin{equation}
\|\tilde u^\eps - \sigma^2_\eps (\frac{M}{\sigma_0^2} u)^k v^\eps_k \|_{H^1(D)} \le C(\eps + |\sigma_\eps^2 - \sigma^2_0|)\|u\|_{W^{2,d}}.
\end{equation}
\item[(3)] In the sub-critical setting, there exists $C > 0$ so that for all $\eps$ sufficiently small,
\begin{equation}
\|\tilde u^\eps - (M u)^k v^\eps_k \|_{H^1(D)} \le C(\sigma^{-2}_\eps + \eta^{\frac{d-2}{2}})\|u\|_{W^{2,d}}.
\end{equation}
\end{itemize}
For $d=2$, the above results hold with $W^{2,d}$ replaced by $W^{2,\infty}$, and $\eta^{\frac{d-2}{2}}$ replaced by $|\log \eta|^{-\frac12}$.
\end{theorem}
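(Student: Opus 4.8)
The plan is to obtain the quantitative rates in Theorem \ref{thm:quanh} by combining three ingredients: the energy (oscillating test function) method from the qualitative proof, the explicit layer-potential representation of the rescaled cell solutions $\chi^\eta_k$, and the correctors $v^\eps_k$ built from those cell solutions. Concretely, I would set $w^\eps := \tilde u^\eps - c_k^\eps v^\eps_k$, where $c_k^\eps = f^k$, $(\tfrac{M}{\sigma_0^2}u)^k$, or $(Mu)^k$ according to the regime, and write the equation solved by $w^\eps$ on $D^\eps$ by plugging $u^\eps = c_k^\eps v^\eps_k + w^\eps$ into \eqref{eq:lamewf}. Since $v^\eps_k$ is rescaled from the solution of the cell problem \eqref{eq:chieta}, $-\cL^{\lambda,\mu}[v^\eps_k]$ produces, on the $\eps$-scale, the ``source term'' that in the qualitative limit becomes the strange term $\tfrac{M}{\sigma_0^2}u$ (or, in the sub/super-critical cases, a term that converges to $f$ or to $0$ at the rate dictated by $\sigma_\eps$ versus $1$). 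The residual in the weak formulation for $w^\eps$ then consists of (i) the discrepancy between the macroscopic coefficient $c_k^\eps$ and the exact one at scale $\eps$, controlled by $\|u\|_{W^{2,d}}$ (resp.\ $W^{2,\infty}$ for $d=2$) via a Taylor expansion of $c_k^\eps(x)$ around the centers of the $\eps$-cells, and (ii) the error in the asymptotic analysis of \eqref{eq:chieta}, i.e. how well $\chi^\eta_k$ and its averages/traces approximate their $\eta\to 0$ limits $M$-related quantities, which is exactly what Section \ref{sec:cell} quantifies (this is where the $\eta^{(d-2)/2}$ and $|\log\eta|^{-1/2}$ factors enter), and (iii) boundary-layer errors near $\partial D$ where $v^\eps_k$ does not vanish, handled by a cutoff and the bound \eqref{eq:uepsbddp} with the correct power of $\sigma_\eps$.

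The key steps, in order, would be: first, assemble from Section \ref{sec:cell} the precise convergence rates for the relevant cell quantities --- the average $\fint \tilde\chi^\eta_k$, the quantity $-\cL^{\lambda,\mu}[\tilde\chi^\eta_k]$ tested against smooth functions (the emergence of $M$), and the $H^1$-norms $\|\nabla\tilde\chi^\eta_k\|$ and $\|\tilde\chi^\eta_k\|$ with their $\eta$-dependence --- recording each with an explicit power of $\eta$. Second, define the correctors $v^\eps_k(x) = \sigma_\eps^{-2}\eps^2\,(\tilde\chi_k)(x/\eps)$ (matching \eqref{eq:vepsdef}) and compute, cell by cell, the weak-form residual $\mathcal{R}^\eps(w) := \int_{D^\eps}\mu\nabla(c_k^\eps v^\eps_k):\nabla w + (\lambda+\mu)\div(c_k^\eps v^\eps_k)\div w - \int_{D^\eps} f\cdot w$ for test functions $w\in H^1_0(D^\eps)$; integrating by parts on each $\eps$-cell, using the cell equation and the trace/jump formulas from Lemma \ref{lem:key}, and Taylor-expanding $c_k^\eps$, split $\mathcal{R}^\eps(w)$ into a ``main'' contribution that cancels against the target effective equation and an ``error'' contribution bounded by $C(\text{rate})\|c_k^\eps\|_{W^{2,\cdot}}\|\nabla\tilde w\|_{L^2(D)}$. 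Third, choose $w = \tilde w^\eps$ in the (localized) energy identity and invoke the Poincaré inequality \eqref{eq:poincare}/\eqref{eq:uepsbddp} with the appropriate $\sigma_\eps$ to absorb the zeroth-order term, yielding the stated $H^1(D)$ bound; the $L^2$ improvement in part (1) follows by an Aubin--Nitsche duality argument using the same corrector machinery on the adjoint (which, by symmetry of $\cL^{\lambda,\mu}$ and of $M$, is the same problem). Fourth, handle the boundary layer: multiply $v^\eps_k$ by a cutoff $\theta_\eps$ equal to $1$ away from $\partial D$ and supported in an $O(\eps)$-neighborhood of $\partial D$, and estimate the commutator terms using $\|\nabla\theta_\eps\|_{L^\infty}\lesssim\eps^{-1}$ together with the decay of $\tilde\chi_k$ and the $W^{2,d}/W^{2,\infty}$ regularity of $u$; this contributes a term of order $\eps$ (absorbed in the critical case) or lower order relative to $\sigma_\eps^{\pm 2}$ in the other cases.

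I expect the main obstacle to be the cell-problem error analysis in step one, specifically quantifying the rate at which the source term $-\cL^{\lambda,\mu}[\tilde\chi^\eta_k]$ generates the matrix $M$ when tested against macroscopic functions, because this requires the full strength of the layer-potential representation: one writes $\tilde\chi^\eta_k$ via the periodic single-layer potential with a momentum determined by inverting $-\tfrac12 I + \cK^*_T$ (plus a correction in $\ker$, fixed by the mean-zero/solvability conditions), then compares the periodic layer potential to the whole-space one as the period $1/\eta\to\infty$, the error being governed by the decay \eqref{eq:cSTdecay} of $\cS_T$ and the convergence of the periodic fundamental solution to $\Gamma_k$ off the diagonal. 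Getting the sharp power $\eta^{(d-2)/2}$ (and the logarithmic analogue $|\log\eta|^{-1/2}$ in $d=2$, where the degeneracy issue around \eqref{eq:AT2d} and the anomalous scaling of $\Gamma_k$ must be dealt with carefully) is the delicate part; once those rates are in hand, the rest is a fairly mechanical, if lengthy, bookkeeping of the energy estimate over the $\eps$-cells together with the standard boundary-layer and duality arguments. A secondary technical point is that, unlike the scalar case of \cite{Jing20}, the operators $-\tfrac12 I+\cK_T$, $-\tfrac12 I+\cK^*_T$ are only Fredholm (not invertible), so one must consistently work on the complement of their $d$-dimensional kernels and track the induced ambiguities, but Lemma \ref{lem:key} and the explicit basis $\phi^*_1,\dots,\phi^*_d$ make this manageable.
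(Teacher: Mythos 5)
Your overall strategy --- form a discrepancy $\zeta^\eps = \tilde u^\eps - c_k^\eps v^\eps_k$, test it against its own equation in $H^1_0(D^\eps)$, and feed in the cell-level rates from Section~\ref{sec:cell} --- matches the paper's, and you correctly identify that the hard analytic input is the $\eta^{(d-2)/2}$ (resp.\ $|\log\eta|^{-1/2}$) rate for the rescaled cell solutions, which is Lemma~4.3 / \eqref{eq:vstrbdd}. But several of the steps you sketch are either unnecessary or misdirected compared with what actually makes the proof work, and one central mechanism is left vague.

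First, you don't need Taylor expansion of $c_k^\eps$ over $\eps$-cells, nor cell-by-cell integration by parts with trace/jump formulas: the paper computes $-\cL^{\lambda,\mu}[\zeta^\eps]$ pointwise in $D^\eps$ (a short product-rule calculation), tests against $\zeta^\eps\in H^1_0(D^\eps)$, and then integrates by parts \emph{globally} in $D^\eps$, with H\"older and Sobolev embedding supplying the $\|f\|_{W^{2,d}}$ factor. Second, the essential trick you gesture at but don't pin down is the insertion of the constant $M^{-1}e_k$ inside the derivatives of $v^\eps_k$ (harmless since constants have zero gradient), followed by integration by parts to move derivatives off $(v^\eps_k - M^{-1}e_k)$; this is exactly what converts a na\"ive estimate via $\|\nabla v^\eps_k\|_{L^2}\lesssim\sigma_\eps^{-1}$ (too weak) into one via $\|v^\eps_k - M^{-1}e_k\|_{L^{2d/(d-2)}}\lesssim\eta^{(d-2)/2}$, the sharp ingredient. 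Third, the $L^2$ improvement in part~(1) requires no Aubin--Nitsche duality: since $\zeta^\eps\in H^1_0(D^\eps)$, the $\sigma_\eps$-weighted Poincar\'e inequality \eqref{eq:poincare} upgrades the $H^1$ bound by a free factor of $\sigma_\eps$, which is all the statement asks for. Fourth, a boundary-layer cutoff near $\partial D$ is not part of the paper's argument --- $\zeta^\eps$ is treated directly as an element of $H^1_0(D^\eps)$ (with a tacit compatibility assumption on $f$ in the super-critical case); introducing $\theta_\eps$ with $\|\nabla\theta_\eps\|_{L^\infty}\sim\eps^{-1}$ would, if anything, ruin the rates. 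Finally, Lemma~\ref{lem:key} enters only through Section~\ref{sec:cell}; by the time one writes the energy estimate in Section~\ref{sec:quanth}, the Fredholm/kernel bookkeeping is already absorbed into the bound \eqref{eq:vstrbdd}, so there is no residual $d$-dimensional ambiguity to track at this stage.
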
 

The quantitative results above contain corrector informations. Take $d\ge 3$ and the sub-critical setting for example, we may write
\begin{equation*}
\tilde u^\eps - (Mu)^k v^\eps_k = \tilde u^\eps - u - r^\eps, \qquad r^\eps := (Mu)^k\left[v^\eps_k - M^{-1}e_k\right].
\end{equation*}
We can think $r^\eps$ as the leading order corrector. Indeed, adding it to $u$, we not only improve the weak convergence of item (3) in Theorem \ref{thm:qualh} to a strong convergence, but can also control the approximation error in $H^1$. Of course, using \eqref{eq:vstrbdd} below which yields estimates for the corrector, we also have the quantitative estimate
\begin{equation*}
\|\tilde u^\eps - u\|_{L^{\frac{2d}{d-2}}} \le C\eta^{\frac{d-2}{2}}\|u\|_{W^{2,d}}
\end{equation*}
We leave such discussions for the other settings to the reader.

Finally, we remark that the $C^{1,\alpha}$ assumption on $\partial T$, in (A2), can be relaxed to $\partial T$ being Lipschitz. We only need to borrow some further techniques of \cite{MR769382,MR975122} to deal with layer potentials on Lipschitz boundaries. Then results in section \eqref{sec:lp} and, hence, the main results of the paper still hold. To simplify the presentations, however, we use the stronger assumption (A2).


\section{Mapping properties for layer-potentials and their periodic variants}
\label{sec:lp}

In this section, we study the properties of the layer potentials and prove Lemma \ref{lem:key}. We also introduce and study their periodic variants, which will be used to analyze \eqref{eq:chieta}.

\subsection{The Rellich's identity}

The scalar version of Lemma \ref{lem:key}, as in \cite{Jing20}, is relatively easy because the Neumann-Poincar\'e operator $\cK_T$ associated to the Laplace operator is compact, for $\partial T \in C^{1,\alpha}$, and Fredholm theory can be invoked. This is not the case for $\cK_T$ in the elastostatic setting, even for smooth $\partial T$. 

To overcome this difficulty, we follow the line of reasoning in \cite{MR769382,MR975122}. An important step is to establish the closedness of the ranges of $-\frac12 I + \cK^*_T$. The key is to show the conormal derivatives of $\cS_T[\phi]$, taken from the two sides of $\partial T$, can bound each other in $L^2$.  To this purpose, we need the following elastostatic version of Rellich formula. Note that $\partial T \in C^{1,\alpha}$ implies, we can find a $C^{1,\alpha}$ vector field $\gamma$ over $\R^d$ with compact support, and for some constant $C > 0$, $\gamma$ satisfies
\begin{equation}
\label{eq:gamfield}
\langle \gamma, N\rangle \ge C > 0, \qquad \text{on } \partial T.
\end{equation}

\begin{proposition}
Let $d\ge 2$, and let $T \subseteq \R^d$ be an open bounded set satisfying (A1) and (A2). Then for any $u$ that verifies $\cL^{\lambda,\mu}[u] = 0$ in $T$ and that $\nabla u$ has trace on $\partial T$, we have    
\begin{equation}
\label{eq:rellich1}
\begin{aligned}
&\int_{\partial T} \langle \gamma,N\rangle \left((\lambda+\mu)(\div u)^2 + \mu|\nabla u|^2\right)\big\rvert_- = 2\int_{\partial T} \langle \gamma,\nabla u \rangle \frac{\partial u}{\partial \nu}\Big\rvert_- \\
&\quad+\int_T (\nabla \cdot \gamma) \left[(\lambda+\mu)(\div u)^2 + \mu|\nabla u|^2\right] -2\int_T (\nabla u\nabla \gamma):\left[(\lambda+\mu)(\div u) I + \mu \nabla u\right].
\end{aligned}
\end{equation}
Similarly, if $\cL^{\lambda,\mu}[u] = 0$ on $T_+$ and $\nabla u$ has trace on $\partial T$, then we have
\begin{equation}
\label{eq:rellich1-1}
\begin{aligned}
&\int_{\partial T} \langle \gamma,N\rangle \left((\lambda+\mu)(\div u)^2 + \mu|\nabla u|^2\right)\big\rvert_+ = 2\int_{\partial T} \langle \gamma,\nabla u \rangle \frac{\partial u}{\partial \nu}\Big\rvert_+ \\
&\quad-\int_T (\nabla \cdot \gamma) \left[(\lambda+\mu)(\div u)^2 + \mu|\nabla u|^2\right] +2\int_T (\nabla u\nabla \gamma):\left[(\lambda+\mu)(\div u) I + \mu \nabla u\right].
\end{aligned}
\end{equation}
\end{proposition}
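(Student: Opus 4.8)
The plan is to derive \eqref{eq:rellich1} as an integration-by-parts (divergence theorem) identity applied to a carefully chosen vector field, exactly in the spirit of the classical Rellich--Pohozaev--Ne\v{c}as identity, and then obtain \eqref{eq:rellich1-1} by an essentially identical computation with the orientation of the boundary reversed (which flips the signs of the volume terms). First I would introduce the shorthand stress-like tensor $\Sigma[u] := (\lambda+\mu)(\div u) I + \mu \nabla u$, so that $\cL^{\lambda,\mu}[u] = \partial_j \Sigma[u]_{ij}$ (this matches the conormal derivative \eqref{eq:conormal}, since $(\partial u/\partial \nu)^i = N^j \Sigma[u]_{ij}$) and the associated energy density is $W[u] := \Sigma[u]:\nabla u = (\lambda+\mu)(\div u)^2 + \mu |\nabla u|^2$. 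The key observation is the pointwise divergence identity
\begin{equation*}
\div\!\left(\gamma\, W[u]\right) - 2\,\partial_j\!\left(\gamma_\ell\, \Sigma[u]_{ij}\,\partial_\ell u^i\right) = (\div \gamma)\,W[u] - 2\,(\partial_j \gamma_\ell)\,\Sigma[u]_{ij}\,\partial_\ell u^i - 2\,\gamma_\ell\,\partial_\ell u^i\,\partial_j\Sigma[u]_{ij},
\end{equation*}
where the crucial cancellation is that the term $\gamma_\ell\,\Sigma[u]_{ij}\,\partial_j\partial_\ell u^i$ appearing when we differentiate the second divergence exactly matches $\tfrac12 \gamma_\ell\,\partial_\ell W[u]$ coming from the first — this uses the symmetry of $\Sigma[u]$ in the sense that $\Sigma[u]_{ij}\,\partial_j\partial_\ell u^i = \tfrac12 \partial_\ell W[u]$, which in turn is precisely why the conormal derivative \eqref{eq:conormal} (rather than the physical one $A^{(1)}$) is the convenient choice and controls $\nabla u$ rather than $\epsilon[u]$. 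When $\cL^{\lambda,\mu}[u]=0$ the last term on the right vanishes.

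Next I would integrate this identity over $T$ and apply the divergence theorem, using the outward normal $N$ on $\partial T$. The boundary contribution from $\div(\gamma W[u])$ is $\int_{\partial T}\langle\gamma,N\rangle W[u]|_-$, and the boundary contribution from $-2\partial_j(\gamma_\ell \Sigma[u]_{ij}\partial_\ell u^i)$ is $-2\int_{\partial T}\gamma_\ell\, (\partial_\ell u^i)\, N^j\Sigma[u]_{ij}|_- = -2\int_{\partial T}\langle\gamma,\nabla u\rangle\cdot\tfrac{\partial u}{\partial\nu}|_-$ (reading $\langle\gamma,\nabla u\rangle$ as the vector with components $\gamma_\ell\partial_\ell u^i$, i.e. $(\nabla u)\gamma$). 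Rearranging gives exactly \eqref{eq:rellich1}, with the remaining two volume integrals matching $\int_T (\div\gamma)W[u]$ and $-2\int_T (\nabla u\,\nabla\gamma):\Sigma[u]$ once one checks the index bookkeeping $(\partial_j\gamma_\ell)\Sigma[u]_{ij}\partial_\ell u^i = (\nabla u\,\nabla\gamma):\Sigma[u]$ with the paper's matrix conventions. For \eqref{eq:rellich1-1} I would run the same computation on $T_+ = \R^d\setminus\ol T$: the outward normal of $T_+$ on $\partial T$ is $-N$, producing the traces $|_+$ with the stated signs, and the volume integrals over $T_+$ require the decay \eqref{eq:cSTdecay}–\eqref{eq:cDT} of layer potentials to justify that the boundary term at infinity vanishes and the (a priori non-integrable-looking) volume integrands are actually integrable — since in applications $u$ will be $\cS_T[\phi]$ or $\cD_T[\phi]$ we have $|\nabla u| = O(|x|^{-d+1})$, so $|\nabla u|^2 = O(|x|^{-2d+2})$ is integrable at infinity for $d\ge 2$, and $\gamma$ having compact support makes this moot anyway; I would state the hypothesis as $\gamma$ compactly supported so only a bounded shell of $T_+$ contributes.

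The main technical obstacle is not the algebra but the \emph{regularity}: the identity is formally valid for smooth $u$, but we only assume $\cL^{\lambda,\mu}[u]=0$ in $T$ (or $T_+$) with $\nabla u$ having an $L^2$ trace on $\partial T\in C^{1,\alpha}$, and $\gamma$ is only $C^{1,\alpha}$. I would handle this by the standard approximation scheme used in \cite{MR769382,MR975122}: interior ellipticity regularity makes $u$ smooth in the open set, so one applies the identity on the domains $T_t$ obtained by pushing $\partial T$ inward a distance $t$ along a transversal $C^{1,\alpha}$ vector field (e.g. along $\gamma$ itself), where everything is legitimate, and then lets $t\to 0^+$; the non-tangential maximal function bound on $\nabla u$ together with $L^2$ convergence of the traces (which is exactly the hypothesis ``$\nabla u$ has trace on $\partial T$'') lets one pass to the limit in the boundary integrals, while the volume integrals converge by dominated convergence. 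The $C^{1,\alpha}$ (rather than merely Lipschitz) assumption on $\partial T$ is what makes the normal field $N$ continuous and the transversal field $\gamma$ with $\langle\gamma,N\rangle\ge C>0$ available via \eqref{eq:gamfield}, so this approximation is clean; I would mention that a Lipschitz version follows from \cite{MR769382,MR975122} with more care, consistent with the remark already made in the paper.
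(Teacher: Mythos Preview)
Your proposal is correct and follows essentially the same route as the paper: both derive the Rellich identity from a pointwise divergence identity (the paper computes $\nabla\cdot(\gamma|\nabla u|^2)$ and $\partial_i[(\gamma^j\partial_j u^i)\Sigma_{ik}]$ separately, you package them together via $\Sigma[u]$ and $W[u]$) and then integrate over $T$ or $T_+$ using the divergence theorem. Your treatment of the regularity issue via approximation by interior domains $T_t$ is in fact more careful than the paper's terse ``from direct computations,'' and your observation that the compact support of $\gamma$ disposes of the boundary at infinity for \eqref{eq:rellich1-1} is exactly right (note the paper's $\int_T$ in \eqref{eq:rellich1-1} is a typo for $\int_{T_+}$, consistent with \eqref{eq:rellich2} and the paper's own proof).
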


\begin{proof}
From direct computations, we check that, either in $T$ or in $\R^d\setminus \ol T$,
\begin{equation*}
\nabla \cdot (\gamma|\nabla u|^2) = (\div \gamma) |\nabla u|^2 + 2 [(\gamma\cdot \nabla)\nabla u ]: \nabla u.
\end{equation*}
where the last term is $2 (\partial_j u^k) \gamma^i \partial_i (\partial_j u^k)$ and the summation convention is envoked. On the other hand, using the fact that $u$ satisfies the Lam\'e system, we also have
\begin{equation*}
\begin{aligned}
(\lambda+\mu)\partial_i[(\gamma^j \partial_j u^i)(\div u)] + \mu  \partial_\ell[(\gamma^j \partial_j u^i)(\partial_\ell u^i)] =  \mu\left[(\partial_\ell \gamma^j)(\partial_j u^i)(\partial_\ell u^i) + (\gamma^j\partial_j \partial_\ell u^i)(\partial_\ell u^i)\right]\\
 (\lambda+\mu)\left[(\div u)(\partial_i \gamma^j)(\partial_j u^i) + \frac12 \partial_j(\gamma^j (\div u)^2) - \frac12 (\div \gamma) (\div u)^2 \right].
\end{aligned}
\end{equation*}
The desired equality is then obtained by integrating those identities in $T$ or in $\R^d\setminus \ol T$, using the divergence theorem, and combining the resulted integral identities.
\end{proof}

We can apply the above identities to $u = \cS_T[\phi]$ for a vector field $\phi \in L^2(\partial T)$. For such $u$, using integration by parts and by the jump formula \eqref{eq:traceuN}, we have
\begin{equation}
\label{eq:cSipT}
\int_{\partial T} \frac{\partial u}{\partial \nu}\Big\rvert_-  = 0, \quad \text{and} \quad 
\int_{\partial T} \frac{\partial u}{\partial \nu}\Big\rvert_+  = \int_{\partial T} \phi.
\end{equation}
For $d\ge 3$, in view of the decay condition \eqref{eq:cSTdecay}, we can apply the Green's identity and show
\begin{equation}
\label{eq:greenTm}
\int_{\partial T} u\cdot \frac{\partial u}{\partial \nu}\Big\rvert_-  = \int_T (\lambda+\mu)(\div u)^2 + \mu |\nabla u|^2,
\end{equation}
and
\begin{equation}
\label{eq:greenTp}
\int_{\partial T} u\cdot \frac{\partial u}{\partial \nu}\Big\rvert_+  = -\int_{\R^d \setminus \ol T} (\lambda+\mu)(\div u)^2 + \mu |\nabla u|^2.
\end{equation}
For $d=2$, the identities above still hold provided that $\phi \in L^2_0(\partial T)$. In \eqref{eq:rellich1} and \eqref{eq:rellich1-1}, if we subtract on both sides the twice of the left hand side, and then take negative signs, we obtain:
\begin{equation}
\label{eq:rellich2}
\begin{aligned}
&\int_{\partial T} \langle \gamma,N\rangle \left((\lambda+\mu)(\div u)^2 + \mu|\nabla u|^2\right)\big\rvert_\pm = 2\int_{\partial T} \langle \gamma,N \rangle \left[(\lambda+\mu)(\mathrm{div}_t\,u)^2 + \mu|\nabla_t u|^2\right]\\
&\quad - 2\int_{\partial T} \mu(\gamma_\parallel \cdot \nabla u)\cdot \frac{\partial u}{\partial N}\Big\rvert_\pm + (\lambda+\mu)(N\cdot (\gamma_\parallel\cdot \nabla) u)(N\cdot \frac{\partial u}{\partial N})\Big\rvert_\pm\\
&\quad \pm \int_{T_\pm} (\nabla \cdot \gamma) \left[(\lambda+\mu)(\div u)^2 + \mu|\nabla u|^2\right] \mp 2\int_{T_\pm} (\nabla u\nabla \gamma):\left[(\lambda+\mu)(\div u) I + \mu \nabla u\right].
\end{aligned}
\end{equation}
Here, we used the identity:
\begin{equation*}
\nabla_t u = (I-N\otimes N)\nabla u, \qquad \mathrm{div}_t\, u = \tr(\nabla_t u).
\end{equation*}
They are, respectively, the tangential gradient of $u$ and the tangential divergence of $u$. From the trace formula \eqref{eq:traceu}, we verify that those terms together with $\gamma_\parallel\cdot \nabla u$ are continuous across $\partial T$, for $u = \cS_T[\phi]$. 
The main step to derive the formula above is to compute
\begin{equation}
\label{eq:rellichdiff}
\langle \gamma, \nabla u\rangle \cdot \frac{\partial u}{\partial \nu} - \langle \gamma, N\rangle \left[(\lambda+\mu)(\div u)^2 + \mu|\nabla u|^2\right].
\end{equation}
We use the pointwise decomposition
\begin{equation*}
\gamma = \langle \gamma,N\rangle N + \gamma_\parallel, \qquad \gamma_\parallel \in X(\partial T).
\end{equation*}
Here $X(\partial T)$ is the tangent space of $\partial T$. Then the term in \eqref{eq:rellichdiff} is hence computed as
\begin{equation*}
\begin{aligned}
&\mu\left[-\langle \gamma,N\rangle |\nabla_t u|^2 + (\gamma_\parallel\cdot \nabla u)\cdot \frac{\partial u}{\partial N}\right] + (\lambda+\mu)(\div u)\left[(\gamma_\parallel\cdot \nabla u)\cdot N - \langle \gamma,N\rangle (\mathrm{div}_t\, u)\right]\\
=&-\langle \gamma,N\rangle \left[\mu|\nabla_t u|^2 + (\lambda+\mu)(\mathrm{div}_t\,u)^2\right] + \mu(\gamma_\parallel\cdot \nabla u) \cdot \frac{\partial u}{\partial N} + (\lambda+\mu) N\cdot (\gamma_\parallel\cdot \nabla u) N\cdot \frac{\partial u}{\partial N}.
\end{aligned}
\end{equation*}

The Rellich's identities \eqref{eq:rellich2} allow us to prove the following key results.
\begin{lemma}
\label{lem:tractionbdd}
Let $d\ge 3$, let $T\subseteq \R^d$ be an open bounded set satisfying (A1) and (A2). Then there exists $C>0$, and for all $\phi \in L^2(\partial T)$, we have
\begin{equation}
\label{eq:keyineq-1}
\|(-\frac12 I + \cK^*)[\phi]\|_{L^2(\partial T)} \le C\left\{\|(\frac12 I + \cK^*)[\phi]\|_{L^2(\partial T)} + \left|\int_{\partial T} \cS_T[\phi]\right|\right\},
\end{equation}
and
\begin{equation}
\label{eq:keyineq-2}
\|(\frac12 I + \cK^*)[\phi]\|_{L^2(\partial T)} \le C\left\{\|(-\frac12 I + \cK^*)[\phi]\|_{L^2(\partial T)} + \left|\int_{\partial T} \cS_T[\phi]\right|\right\}.
\end{equation}
Moreover, for $d = 2$, the above inequalities remain valid if $\phi \in L^2_0(\partial T)$ in addition.
\end{lemma}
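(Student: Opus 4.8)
The plan is to deduce Lemma~\ref{lem:tractionbdd} from the Rellich identities \eqref{eq:rellich2} applied to $u = \cS_T[\phi]$, exploiting that the tangential data of $u$ is continuous across $\partial T$ while the conormal derivatives jump by $\phi$ (cf.\ \eqref{eq:traceuN}). First I would write $u_\pm$ for the two-sided conormal derivatives $\frac{\partial u}{\partial \nu}\rvert_\pm$ and recall $u_+ - u_- = \phi$, together with $u_+ + u_- = 2\cK^*_T[\phi]$, so that $u_\pm = (\pm\frac12 I + \cK^*_T)[\phi]$. The quantities $\frac12 I + \cK^*_T$ and $-\frac12 I + \cK^*_T$ are thus exactly $u_+$ and $u_-$, and the inequalities claimed are the statement that each of these controls the other (modulo the rigid-motion-type defect measured by $\int_{\partial T}\cS_T[\phi]$). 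Subtracting the two Rellich identities in \eqref{eq:rellich2} (the $+$ and $-$ versions), the tangential terms on the right-hand side cancel because they are continuous across $\partial T$, and one is left with an identity of the schematic form
\begin{equation*}
\int_{\partial T}\langle\gamma,N\rangle\bigl(\mathcal{Q}(\nabla u_-) - \mathcal{Q}(\nabla u_+)\bigr) = (\text{volume terms over } T \text{ and } T_+),
\end{equation*}
where $\mathcal{Q}(\nabla u) = (\lambda+\mu)(\div u)^2 + \mu|\nabla u|^2$ is the coercive quadratic form singled out by our choice of conormal derivative.

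The next step is to turn this into an $L^2$-norm comparison. Using $\langle\gamma,N\rangle \ge C > 0$ on $\partial T$ from \eqref{eq:gamfield}, the boundary integral on the left controls $\|\nabla u\|^2_{L^2(\partial T)}$ (both sides), hence $\|u_\pm\|^2_{L^2(\partial T)}$ after absorbing the zeroth-order conormal term. The volume terms on the right I would estimate by the interior/exterior Green's identities \eqref{eq:greenTm}--\eqref{eq:greenTp}: these say $\int_T \mathcal{Q}(\nabla u) = \int_{\partial T} u\cdot u_-$ and $\int_{T_+}\mathcal{Q}(\nabla u) = -\int_{\partial T} u\cdot u_+$, which are bounded by $\|u\|_{L^2(\partial T)}\|u_\pm\|_{L^2(\partial T)}$. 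The trace $\|u\|_{L^2(\partial T)} = \|\cS_T[\phi]\|_{L^2(\partial T)}$ in turn is bounded, via a Poincar\'e-type inequality on $\partial T$, by $C(\|\nabla_t u\|_{L^2(\partial T)} + |\int_{\partial T}\cS_T[\phi]|)$, and $\|\nabla_t u\|_{L^2(\partial T)} \le \|\nabla u\rvert_\pm\|_{L^2(\partial T)} \le C(\|u_+\|_{L^2} + \|u_-\|_{L^2})$. The remaining volume-gradient term $\int (\nabla u\nabla\gamma):[\cdots]$ is handled the same way since $|\nabla\gamma|$ is bounded and $\int_{T}|\nabla u|^2$, $\int_{T_+}|\nabla u|^2$ are controlled as above. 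Assembling, one gets $\|u_-\|^2 + \|u_+\|^2 \lesssim (\|u_+\| + \|u_-\|)(\text{same}) + |\int\cS_T[\phi]|^2$-type terms; but to prove \eqref{eq:keyineq-1} I instead want the asymmetric bound, so I would keep track of signs carefully: the subtracted Rellich identity should be arranged so that, say, $\|u_-\|^2_{L^2(\partial T)}$ appears with a good sign on the left and the right-hand side involves only $\|u_+\|_{L^2(\partial T)}$, $|\int_{\partial T}\cS_T[\phi]|$, and a term $\|u_-\|_{L^2(\partial T)}$ that enters linearly (from the cross terms $\|u\|\,\|u_-\|$) and can be absorbed by Young's inequality. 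This yields \eqref{eq:keyineq-1}; \eqref{eq:keyineq-2} follows by the symmetric bookkeeping, swapping the roles of $T$ and $T_+$ and of $u_+$ and $u_-$.

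The main obstacle is the careful sign/absorption bookkeeping in the previous paragraph: because the interior and exterior Green's identities contribute with opposite signs and the volume divergence terms in \eqref{eq:rellich2} also flip sign between the $\pm$ versions, one must organize the linear combination of the two Rellich identities so that exactly one of $\|u_+\|_{L^2(\partial T)}$, $\|u_-\|_{L^2(\partial T)}$ is coercively controlled while the other appears only in terms that are either lower order or absorbable. A secondary technical point, specific to $d=2$, is that \eqref{eq:greenTm}--\eqref{eq:greenTp} and the decay needed to integrate by parts in the unbounded domain $T_+$ only hold when $\phi \in L^2_0(\partial T)$ (otherwise $\cS_T[\phi]$ grows logarithmically); this is precisely why the last sentence of the lemma restricts to mean-zero momenta when $d=2$, and the proof for $d=2$ is otherwise verbatim the same once that restriction is in force. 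I would also note that the Poincar\'e inequality on $\partial T$ used to bound $\|\cS_T[\phi]\|_{L^2(\partial T)}$ requires controlling $u$ modulo constants, which is exactly what the $|\int_{\partial T}\cS_T[\phi]|$ term on the right-hand side of \eqref{eq:keyineq-1}--\eqref{eq:keyineq-2} accounts for.
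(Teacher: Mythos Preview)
Your proposal assembles the right ingredients---Rellich identities, the interior/exterior Green's identities \eqref{eq:greenTm}--\eqref{eq:greenTp}, Poincar\'e on $\partial T$, absorption via Young---but the organizing idea of \emph{subtracting} the $\pm$ versions of \eqref{eq:rellich2} does not close. After subtraction the left side is $\int_{\partial T}\langle\gamma,N\rangle\bigl(\mathcal{Q}(\nabla u)\rvert_- - \mathcal{Q}(\nabla u)\rvert_+\bigr)$, a \emph{difference} of two nonnegative quantities; your sentence ``the boundary integral on the left controls $\|\nabla u\|^2_{L^2(\partial T)}$ (both sides)'' is therefore false as written. If you instead move the $\mathcal{Q}\rvert_+$ term to the right, what appears there is comparable to $\|\nabla u\rvert_+\|^2_{L^2(\partial T)}$, i.e.\ the \emph{full} exterior gradient, not $\|u_+\|^2=\|\tfrac{\partial u}{\partial\nu}\rvert_+\|^2$; passing from the former to the latter requires a second, independent use of Rellich on $T_+$, so the subtraction has bought nothing. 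The ``careful sign/absorption bookkeeping'' you flag as the main obstacle is in fact where the whole argument lives, and your sketch does not supply it.

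The paper's proof avoids this by using $\|\nabla_t u\|_{L^2(\partial T)}$ as a pivot, applying two \emph{different arrangements} of Rellich on the two sides rather than the same one twice. Step~1 (interior): from \eqref{eq:rellich2} on $T_-$ one gets, after absorbing the cross terms, $\|u_-\|^2 \le C\bigl(\|\nabla_t u\|^2 + \int_T\mathcal{Q}\bigr)$; then $\int_T\mathcal{Q} = \int_{\partial T} u\cdot u_- = \int_{\partial T}(u-\langle u\rangle)\cdot u_-$ because $\int_{\partial T} u_- = 0$ by \eqref{eq:cSipT}, so Poincar\'e on $\partial T$ plus Young give $\|u_-\| \le C\|\nabla_t u\|$ with \emph{no} mean-value term. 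Step~2 (exterior): from \eqref{eq:rellich1-1} on $T_+$---the arrangement with $\tfrac{\partial u}{\partial\nu}$ rather than $\nabla_t u$ on the right---one gets $\|\nabla u\rvert_+\|^2 \le C\bigl(\|u_+\|^2 + \int_{T_+}\mathcal{Q}\bigr)$; here $\int_{T_+}\mathcal{Q} = -\int_{\partial T} u\cdot u_+$ and now $\int_{\partial T} u_+ = \int_{\partial T}\phi$ need not vanish, so the Poincar\'e step leaves behind exactly $|\langle u\rangle_{\partial T}|$, yielding after absorption $\|\nabla_t u\| \le C\bigl(\|u_+\| + |\int_{\partial T}\cS_T[\phi]|\bigr)$. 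Chaining the two steps through the continuity of $\nabla_t u$ across $\partial T$ gives \eqref{eq:keyineq-1}; swapping the roles of $T_-$ and $T_+$ gives \eqref{eq:keyineq-2}. Your remark that $d=2$ requires $\phi\in L^2_0$ so that \eqref{eq:greenTp} applies is correct and matches the paper.
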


\begin{proof}
We only establish \eqref{eq:keyineq-1}; the other one can be proved similarly. Let $u = \cS_T[\phi]$ in $T$ and in $T_+$. By the trace formula and the definition in \eqref{eq:conormal}, we have
\begin{equation}
\label{eq:KKbdd-1}
\|(-\frac12 I + \cK^*)[\phi]\|^2_{L^2(\partial T)} = \left\|\frac{\partial u}{\partial \nu}\Big\rvert_-\right\|^2_{L^2(\partial T)} \le C\int_{\partial T} \langle \gamma,N\rangle[(\lambda+\mu)(\div u)^2 + \mu|\nabla u|^2]\big\rvert_-.
\end{equation}

\smallskip

\emph{Step 1}: Using the Rellich's identity \eqref{eq:rellich2}, we can deduce
\begin{equation}
\label{eq:KKbdd-2}
\left\|\frac{\partial u}{\partial \nu}\Big\rvert_-\right\|^2_{L^2(\partial T)}  \le C\left\{ \int_{\partial T} |\nabla_t u|^2 + \int_T (\lambda+\mu)(\div u)^2 + \mu|\nabla u|^2\right\}.
\end{equation}
Let us explain how this is done by considering a couple of typical terms on the right hand side of \eqref{eq:rellich2}. Take the second integral there for example; we can choose $c > 0$ sufficiently small so that
\begin{equation*}
 \left|\int_{\partial T} \mu(\gamma_\parallel \cdot \nabla u)\cdot \frac{\partial u}{\partial N}\right| = \left|\int_{\partial T}\mu(\gamma_\parallel \cdot \nabla_t u)\cdot \frac{\partial u}{\partial N}\right| \le c\int_{\partial T} \mu\left|\frac{\partial u}{\partial N}\right|^2 + \frac{1}{4c} \mu \|\gamma\|_{L^\infty}\|\nabla_t u\|^2_{\partial T}.
\end{equation*}
The goes to \eqref{eq:KKbdd-2} after the integral term for $\frac{\partial u}{\partial N}$ is swallowed. Let us also consider the last integral on the right hand side of \eqref{eq:rellich2}. By H\"older inequality and Young's inequality, we can choose $c > 0$ sufficiently small so that
\begin{equation*}
\begin{aligned}
&\left|\int_{T} (\nabla u\nabla \gamma):\left[(\lambda+\mu)(\div u) I + \mu \nabla u\right]\right| \le C\|\nabla \gamma\|_{L^\infty} \|\nabla u\|_{L^2} \left(\int_T (\lambda+\mu)(\div u)^2 + \mu|\nabla u|^2\right)^\frac12\\
\le &C\|\nabla \gamma\|_{L^\infty} \left(\int_T (\lambda+\mu)(\div u)^2 + \mu|\nabla u|^2\right).
\end{aligned}
\end{equation*}
This is then controlled by \eqref{eq:KKbdd-2}.

\smallskip

Next, to control \eqref{eq:KKbdd-2}, we observe that
\begin{equation*}
\int_T (\lambda+\mu)(\div u)^2 + \mu|\nabla u|^2 = \int_{\partial T} u\cdot \frac{\partial u}{\partial \nu}\Big\rvert_- = \int_{\partial T} (u - \langle u\rangle_{\partial T})\cdot \frac{\partial u}{\partial \nu}\Big\rvert_-.
\end{equation*}
Apply H\"older inequality, Poincar\'e inequality on $\partial T$, and Young's inequality, we deduce that
\begin{equation*}
\int_T (\lambda+\mu)(\div u)^2 + \mu|\nabla u|^2 \le c\left\|\frac{\partial u}{\partial \nu}\big\rvert_-\right\|^2_{L^2(\partial T)} + C\|\nabla_t u\|^2_{L^2(\partial T)}.
\end{equation*}
Using this estimate in \eqref{eq:KKbdd-2}, we get 
\begin{equation*}
\left\|\frac{\partial u}{\partial \nu}\Big\rvert_-\right\|_{L^2(\partial T)} \le C\|\nabla_t u_-\|_{L^2(\partial T)}.
\end{equation*}

\smallskip

\emph{Step 2}: We control $\|\nabla_t u\|_{L^2(\partial T)}$ by $\|\frac{\partial u}{\partial \nu}\big\rvert_+\|_{L^2}$. By continuity of tangential derivative of $\cS_T$, 
\begin{equation*}
 \|(\nabla_t u) \rvert_-\|^2_{L^2(\partial T)} = \|(\nabla_t u) \rvert_+\|^2_{L^2(\partial T)} \le \|(\nabla u) \rvert_+\|^2_{L^2(\partial T)}.
 \end{equation*}
Using the Rellich formula \eqref{eq:rellich1-1} and the same type of arguments in the previous step, we have
\begin{equation}\label{eq:KKbdd-3}
\|(\nabla u) \rvert_+\|^2_{L^2(\partial T)} \le C\left\{\left\|\frac{\partial u}{\partial \nu}\Big\rvert_+\right\|^2_{L^2(\partial T)} + \int_{T_+} (\lambda + \mu)(\div u)^2 + \mu|\nabla u|^2\right\}. 
\end{equation}
In view of \eqref{eq:cSipT}, we have the following identity
\begin{equation*}
\int_{T_+} (\lambda + \mu)(\div u)^2 + \mu|\nabla u|^2 = -\int_{\partial T}u\cdot \frac{\partial u}{\partial \nu}\Big\rvert_+ = -\int_{\partial T}(u-\langle u\rangle)\cdot \frac{\partial u}{\partial \nu}\Big\rvert_+ - \langle u\rangle \int_{\partial T} \frac{\partial u}{\partial \nu}\Big\rvert_+.
\end{equation*}
Note also, for $d=2$ we need $\phi \in L^2_0$ to apply the Green's identity. We now apply the Poincar\'e inequality on $\partial T$ to get
\begin{equation*}
\left|\int_{\partial T}u\cdot \frac{\partial u}{\partial \nu}\Big\rvert_+\right| \le c\|(\nabla_t u)\rvert_+\|^2_{L^2(\partial T)} + |\langle u\rangle_{\partial T}|^2 + \frac{C}{c}\left\|\frac{\partial u}{\partial \nu}\Big\rvert_+ \right\|^2.
\end{equation*}
Using this in \eqref{eq:KKbdd-3} yields
\begin{equation*}
\|\nabla_t u\|_{L^2(\partial T)} \le C\left\{ \left\|\frac{\partial u}{\partial \nu}\big\rvert_+ \right\| + \left|\int_{\partial T} u \right|\right\}.
\end{equation*}
Combine this with the conclusion of Step 1; we complete the proof of \eqref{eq:keyineq-1}.
\end{proof}

\subsection{Proof of Lemma \ref{lem:key}}
\label{sec:proofkey}

In this section, without further specifications, the operators $\pm \frac12 I + \cK_T$ and $\pm\frac12 I + \cK^*_T$ are viewed as bounded linear transformations on $L^2(\partial T)$. In addition, assumptions in (A1) and (A2) about $T$ are always invoked. We also denoted by $\mathbb{V}_0$ the space of constant fields in $\partial T$, and view $e_j$, $j=1,\dots,d$, as a basis for $\mathbb{V}_0$.

\begin{lemma}
\label{lem:kerK}
The inclusion $\mathbb{V}_0 \subseteq \ker(-\frac12 I + \cK_T)$ holds.
\end{lemma}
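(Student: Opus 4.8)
The claim is that every constant vector field on $\partial T$ lies in the kernel of $-\frac12 I + \cK_T$. By the jump formula \eqref{eq:jumpD}, for any $\psi \in L^2(\partial T)$ we have $\cD_T[\psi]\rvert_- = (\frac12 I + \cK_T)[\psi]$ and $\cD_T[\psi]\rvert_+ = (-\frac12 I + \cK_T)[\psi]$. So it suffices to show that when $\psi = c$ is a constant field, the exterior trace of $\cD_T[c]$ vanishes on $\partial T$; equivalently, that $\cD_T[c] \equiv 0$ on $T_+ = \R^d \setminus \ol T$. The natural route is: (i) identify $\cD_T[c]$ on the interior domain $T_-$ with the constant field $c$ itself; (ii) deduce the exterior behavior.

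\textbf{Step 1: the interior identity $\cD_T[c] = c$ on $T_-$.} The constant vector field $x \mapsto c$ solves the homogeneous Lam\'e system $\cL^{\lambda,\mu}[c] = 0$ in $\R^d$, and in particular in $T_-$, with conormal derivative $\partial c/\partial \nu = (\lambda+\mu)(\div c)N + \mu(\nabla c)N = 0$ since $\nabla c = 0$. The plan is to invoke the Green's representation formula for solutions of the Lam\'e system on $T_-$: for $u$ satisfying $\cL^{\lambda,\mu}[u] = 0$ in $T_-$ with traces on $\partial T$, one has, for $x \in T_-$,
\begin{equation*}
u(x) = -\cS_T\!\left[\tfrac{\partial u}{\partial \nu}\Big\rvert_-\right](x) + \cD_T[u\rvert_-](x).
\end{equation*}
This is the standard consequence of \eqref{eq:GreenII} applied with $v = \Gamma_k(x - \cdot)$ (the fundamental solution \eqref{eq:Gamma}), using $\cL^{\lambda,\mu}[\Gamma_k(x-\cdot)] = \delta_x e_k$ and the decay/growth conditions, exactly as in the scalar case of \cite{Jing20}; the conormal derivative here is the one fixed in \eqref{eq:conormal}, consistent with the definitions \eqref{eq:cST}, \eqref{eq:cDT}. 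Applying this to $u \equiv c$: the single-layer term drops because $\partial c/\partial \nu = 0$, leaving $c = \cD_T[c](x)$ for all $x \in T_-$. Hence $\cD_T[c]\rvert_- = c$ on $\partial T$, which already gives $(\frac12 I + \cK_T)[c] = c$, i.e. $(-\frac12 I + \cK_T)[c] = \cK_T[c] - \tfrac12 c$ differs from what we want by the jump, so we still need the exterior side.

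\textbf{Step 2: the exterior trace vanishes.} On $T_+ = \R^d \setminus \ol T$, the field $w := \cD_T[c]$ satisfies $\cL^{\lambda,\mu}[w] = 0$ and, as noted after \eqref{eq:cDT}, $|w(x)| = O(|x|^{-d+1})$ as $|x| \to \infty$, for all $d \ge 2$; in particular $w$ decays (and $|\nabla w|$ even faster). The jump relation for the double-layer potential gives $w\rvert_+ - w\rvert_- = -[\,(\tfrac12 I + \cK_T) - (-\tfrac12 I + \cK_T)\,][c] = -c$ — more directly, from \eqref{eq:jumpD}, $w\rvert_+ - w\rvert_- = -\psi = -c$ with $\psi = c$. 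Combined with Step 1 ($w\rvert_- = c$) this yields $w\rvert_+ = 0$ on $\partial T$. Thus $w$ solves the exterior Dirichlet problem for the Lam\'e system with zero boundary data and decay at infinity; by the uniqueness of such solutions (via the exterior Green's identity \eqref{eq:greenTp}, valid here because $w$ and $\nabla w$ decay fast enough, forcing $\int_{T_+}(\lambda+\mu)(\div w)^2 + \mu|\nabla w|^2 = 0$, hence $\nabla w = 0$, hence $w \equiv 0$ by the decay), we get $w \equiv 0$ on $T_+$. In particular $w\rvert_+ = (-\frac12 I + \cK_T)[c] = 0$, proving $c \in \ker(-\frac12 I + \cK_T)$, i.e. $\mathbb{V}_0 \subseteq \ker(-\frac12 I + \cK_T)$.

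\textbf{Main obstacle.} The only subtle point is making the Green's representation formula and the exterior uniqueness argument rigorous under the regularity at hand ($\partial T \in C^{1,\alpha}$, traces of $\nabla u$ in $L^2(\partial T)$) and, for $d = 2$, the logarithmic growth of $\Gamma_k$: one must check that constant fields cause no trouble with the $d=2$ growth condition (they don't, since the single-layer term is absent and $\cD_T[c]$ has the stated decay regardless of dimension), and that the exterior Green's identity applies to $w = \cD_T[c]$ given only the $O(|x|^{-d+1})$ decay — which is exactly the borderline case handled by the remark that \eqref{eq:GreenI} holds on $T_+$ when $|u||\nabla v| = o(|x|^{-d+1})$, applied with $u = v = w$ giving $|w||\nabla w| = O(|x|^{-2d+2}) = o(|x|^{-d+1})$ for $d \ge 2$. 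None of this requires compactness of $\cK_T$, so the argument is insensitive to the genuine-singular-integral nature of the operator.
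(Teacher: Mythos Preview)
Your argument is correct, but Step 2 is unnecessary: once Step 1 yields $(\tfrac12 I + \cK_T)[c] = c$, subtracting $c$ from both sides gives $(-\tfrac12 I + \cK_T)[c] = 0$ immediately --- your remark that this ``differs from what we want by the jump'' is a momentary algebraic slip, since $\cK_T[c] = \tfrac12 c$ is precisely the lemma. Step 1 is essentially the paper's own proof, which applies Green's identity with $u = \Gamma_k(x-\cdot)$ and $v = e_j$ on $T\setminus B_\delta(x)$ for $x \in \partial T$ and sends $\delta \to 0$ to obtain $\cK_T[e_j](x) = \tfrac12 e_j$ directly on the boundary; you instead take $x \in T_-$, derive the representation $c = \cD_T[c](x)$, and then invoke the jump formula \eqref{eq:jumpD} to pass to $\partial T$. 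The underlying computation is identical --- Green's identity with the fundamental solution and the constant field --- only the order in which the boundary limit is taken differs. (Incidentally, had you wanted the exterior side directly: for $x \in T_+$ the same Green's identity on $T_-$, now with no singularity to excise, gives $\cD_T[c](x) = 0$ at once, with no need for the exterior uniqueness argument.)
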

\begin{proof}
We need to check $\cK_T[e_j](x) = \frac12 e_j$ for all $x \in \partial T$ and for each $j = 1,\dots, d$. This is done by using the Green's identity \eqref{eq:GreenI} with $u = \Gamma_k$ and $v = e_j$ in $T\setminus B_\delta(x)$, compute the resulted boundary integral on $T \cap \partial B_\delta(x)$, and compute the limit of this integral as $\delta \to 0$. This is standard and the details are hence omitted.
\end{proof}
\begin{lemma}\label{lem:cKsL20}
The range of $-\frac12 I + \cK_T^*$ is contained in $L^2_0(\partial T)$ and is closed. Moreover, this operator restricted to $L^2_0$ is injective.
\end{lemma}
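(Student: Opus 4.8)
The plan is to prove the three assertions of Lemma~\ref{lem:cKsL20} in the natural order: first the range inclusion, then closedness, then injectivity on $L^2_0$.

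\textbf{Step 1 (range inclusion).} For $\phi \in L^2(\partial T)$, set $u = \cS_T[\phi]$ on $T_\pm$. By the jump formula \eqref{eq:traceuN} we have $(-\frac12 I + \cK^*_T)[\phi] = \frac{\partial u}{\partial\nu}\big|_-$, and the first identity in \eqref{eq:cSipT} states exactly that $\int_{\partial T}\frac{\partial u}{\partial\nu}\big|_- = 0$. Hence $(-\frac12 I + \cK^*_T)[\phi] \in L^2_0(\partial T)$ for every $\phi$, which is the claimed inclusion $\ran(-\frac12 I + \cK^*_T) \subseteq L^2_0(\partial T)$. (For $d=2$ one first notes that the identity $\int_{\partial T}\frac{\partial u}{\partial\nu}\big|_- = 0$ comes from integration by parts on $T$ and does not require the mean-zero condition; alternatively, since $\cK^*_T[e_j]$ is computed dually to Lemma~\ref{lem:kerK}, one can verify the inclusion directly on all of $L^2$.)

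\textbf{Step 2 (closed range).} This is where the Rellich machinery pays off. The key inequality \eqref{eq:keyineq-2} of Lemma~\ref{lem:tractionbdd}, combined with \eqref{eq:keyineq-1}, shows that on the subspace where $\int_{\partial T}\cS_T[\phi] = 0$ the two norms $\|(\pm\frac12 I + \cK^*_T)[\phi]\|_{L^2}$ are comparable. I would argue as follows: suppose $\psi_n = (-\frac12 I + \cK^*_T)[\phi_n] \to \psi$ in $L^2(\partial T)$. Since $\cK^*_T - \cK_T$ is compact and $\ker(\frac12 I + \cK^*_T)$ is finite-dimensional (this is the content of item (1) of Lemma~\ref{lem:key}, or one argues it directly from the compact perturbation), one may normalize $\phi_n$ modulo that kernel and reduce the additive term $|\int_{\partial T}\cS_T[\phi_n]|$ using a compactness/contradiction argument — if $\|\phi_n\|\to\infty$, rescale, extract a weakly convergent subsequence, and use the compactness of $\cK^*_T - \cK_T$ together with \eqref{eq:keyineq-1} to derive that the limit lies in $\ker(-\frac12 I + \cK^*_T)$, contradicting the normalization. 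This yields a uniform bound $\|\phi_n\|_{L^2} \le C(\|\psi_n\| + \text{correction})$, and then a standard subsequence/compactness argument produces $\phi$ with $(-\frac12 I + \cK^*_T)[\phi] = \psi$, so the range is closed. The precise handling of the $|\int_{\partial T}\cS_T[\phi]|$ term and of the mean-zero normalization for $d=2$ is the main technical obstacle here.

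\textbf{Step 3 (injectivity on $L^2_0$).} Suppose $\phi \in L^2_0(\partial T)$ and $(-\frac12 I + \cK^*_T)[\phi] = 0$. Then with $u = \cS_T[\phi]$ we have $\frac{\partial u}{\partial\nu}\big|_- = 0$. Apply the interior Green's identity \eqref{eq:greenTm}: since $\int_{\partial T} u\cdot\frac{\partial u}{\partial\nu}\big|_- = 0$, we get $\int_T (\lambda+\mu)(\div u)^2 + \mu|\nabla u|^2 = 0$, and strong ellipticity ($\mu>0$) forces $\nabla u \equiv 0$ on $T$, so $u$ is constant on $\ol T$; write $u|_{\partial T} = c$. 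For $d\ge 3$ the decay \eqref{eq:cSTdecay} lets us also use \eqref{eq:greenTp}; now $\frac{\partial u}{\partial\nu}\big|_+ = \frac{\partial u}{\partial\nu}\big|_- + \phi = \phi$ by \eqref{eq:traceuN}, and $u|_+ = u|_- = c$ by continuity of the single layer potential, so $\int_{T_+}(\lambda+\mu)(\div u)^2 + \mu|\nabla u|^2 = -\int_{\partial T} c\cdot\phi = -c\cdot\int_{\partial T}\phi = 0$ since $\phi \in L^2_0$. Again ellipticity gives $\nabla u \equiv 0$ on $T_+$; combined with decay at infinity, $u \equiv 0$ on $T_+$, and matching constants forces $c = 0$, hence $u \equiv 0$ on all of $\R^d\setminus\partial T$. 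Finally the jump relation for the conormal derivative of the single layer, $\frac{\partial u}{\partial\nu}\big|_+ - \frac{\partial u}{\partial\nu}\big|_- = \phi$, gives $\phi = 0$. For $d=2$ the hypothesis $\phi \in L^2_0$ is precisely what is needed to invoke \eqref{eq:greenTp} (the decay $|u(x)| = O(|x|^{-1})$ holds for mean-zero momenta), so the same argument goes through verbatim.

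I expect Step~2 to be the crux: extracting a closed range from the two-sided bound in Lemma~\ref{lem:tractionbdd} requires carefully quotienting out the finite-dimensional kernel and absorbing the nonlocal correction term $|\int_{\partial T}\cS_T[\phi]|$, and one must be attentive to the $d=2$ restriction to $L^2_0$ throughout. Steps~1 and~3 are essentially bookkeeping with the Green's identities already assembled in the excerpt.
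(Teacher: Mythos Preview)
Your Steps~1 and~3 are correct and essentially identical to the paper's arguments (the paper phrases Step~1 via duality with Lemma~\ref{lem:kerK}, but your use of \eqref{eq:cSipT} is equivalent). The issue is the \emph{ordering} of Steps~2 and~3, and a circularity in Step~2.

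In the paper, injectivity on $L^2_0$ is established \emph{before} closedness, because it is used inside the closedness argument. Concretely: working with preimages $h_j \in L^2_0$ (legitimate since the range already sits in $L^2_0$, which has finite codimension), when the rescaled $\tilde h_j = h_j/\|h_j\| \in L^2_0$ converges weakly to some $\tilde h \in L^2_0$ with $(-\frac12 I + \cK^*_T)[\tilde h] = 0$, it is precisely the already-proved injectivity on $L^2_0$ that gives $\tilde h = 0$. Only then does compactness of $\cS_T$ (as an operator on $L^2(\partial T)$, via its weakly singular kernel) yield $\int_{\partial T}\cS_T[\tilde h_j]\to 0$, so that Lemma~\ref{lem:tractionbdd} applied to $\tilde h_j \in L^2_0$ upgrades the weak convergence to strong, producing the contradiction with $\|\tilde h_j\|=1$. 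Your sketch says ``derive that the limit lies in $\ker(-\frac12 I + \cK^*_T)$, contradicting the normalization,'' but landing in the kernel is not a contradiction unless you already know the kernel meets $L^2_0$ trivially---which is your Step~3.

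Your appeal to ``item~(1) of Lemma~\ref{lem:key}'' inside Step~2 is circular: Lemma~\ref{lem:key} is proved \emph{using} Lemma~\ref{lem:cKsL20}. (You also write $\ker(\frac12 I + \cK^*_T)$ where presumably $\ker(-\frac12 I + \cK^*_T)$ is meant; the plus-sign kernel plays no role here.) The fix is simply to swap your Steps~2 and~3 and, in the closedness step, restrict to $L^2_0$ from the outset and use the injectivity you just proved in place of any forward reference.
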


\begin{proof} \emph{Step 1}: We check that $\ran(-\frac12 I + \cK^*_T) \subseteq L^2_0$. This is true because, for each $\ell = 1,\dots, d$, and for any $\phi \in L^2(\partial T)$ and in view of the previous lemma, we have
\begin{equation*}
\int_{\partial T} e_\ell \cdot (-\frac12 I + \cK^*_T)[\phi] = \int_{\partial T} (-\frac12 I + \cK_T)[e_\ell] \cdot \phi = 0.
\end{equation*}

\smallskip

\emph{Step 2}: We show $\ker(-\frac12 I + \cK^*_T)\cap L^2_0 = \{0\}$; in other words, $-\frac12 I + \cK^*_T$ is injective from $L^2_0$ to $L^2_0$. Suppose $\phi$ is an element in this intersection. Let $u = \cS_T[\phi]$. Then we have
\begin{equation*}
\cL^{\lambda,\mu} [u]  = 0 \text{ in } T_\pm, \qquad \frac{\partial u}{\partial \nu}\Big\rvert_- = 0, \qquad \frac{\partial u}{\partial \nu}\Big\rvert_+ = \phi, \qquad \int_{\partial T} \phi = 0.
\end{equation*}
By the Green's identity and by the continuity of $u$ across $\partial T$, we first get $u$ is a constant in $\ol T$. Since $\phi \in L^2_0$, the Green's identity \eqref{eq:greenTp} holds for all $d\ge 2$. The left hand side of \eqref{eq:greenTp} vanishes because of the observations above. Hence, $u$ is a constant over $\R^d$. The conormal of $u$ computed from $T_+$ is then zero, i.e. $\phi = 0$.

\smallskip

\emph{Step 3}: Since $L^2_0$ has finite codimension $d$, we confirm $\ran(-\frac12 I + \cK^*_T)$ is closed by showing that the restricted operator $-\frac12 I + \cK^*_T : L^2_0 \to L^2_0$ has closed range. 

Now suppose $\{g_j\} \subseteq L^2_0(\partial T)$ that satisfies $g_j \in \ran(-\frac12 I + \cK^*_T)$ and $g_j \to g$ strongly in $L^2_0$. We need to check that $g \in \ran(-\frac12 I + \cK^*_T)$. By assumption, we can find $\{h_j\} \subset L^2_0(\partial T)$ such that
\begin{equation*}
(-\frac12 I + \cK^*_T)[h_j] = g_j, \qquad \text{in } \partial T.
\end{equation*}
If the set $\{h_j\}$ is bounded, then there exists a subsequence still denoted by $\{h_j\}$, and $h_j \to h$ weakly in $L^2_0(\partial T)$. For any $\phi \in L^2(\partial T)$, we have
\begin{equation}
\label{eq:lem:Kstar-2}
\begin{aligned}
\langle g,  \phi\rangle_{L^2(\partial T)} &= \lim_{j\to \infty} \langle g_j , \varphi \rangle_{L^2(\partial T)} = \lim_{j\to \infty} \langle h_j, (-\frac12 I + \cK_T)[\phi] \rangle_{L^2(\partial T)} \\
&= \langle h,  (-\frac12 I + \cK_T)[\phi]\rangle_{L^2(\partial T)}  =  \langle(-\frac12 I + \cK_T^*)[h], \phi\rangle_{L^2(\partial T)}. 
\end{aligned}
\end{equation}
Since $\phi$ is arbitrary, we must have $g = (-\frac12 I + \cK^*_T)[h]$. The claim of this step follows in this case.

If $\{h_j\}$ is unbounded, we may assume (by extracting a subsequence if necessary) that $\|h_j\| \to \infty$. Then define $\tilde h_j = h_j/\|h_j\| \in L^2_0$; they satisfy
\begin{equation}
\label{eq:lem:Kstar-3}
\|\tilde h_j\|_{L^2(\partial T)} = 1, \quad\text{and}\quad (-\frac12 I + \cK^*_T)[\tilde h_j] = \frac{g_j}{\|h_j\|} \to 0 \text{ as } j\to \infty.
\end{equation}
We may assume that $\tilde h_j$ converges weakly to some $\tilde h \in L^2_0(\partial T)$. Very similar to \eqref{eq:lem:Kstar-2}, we can conclude that $(-\frac12 I + \cK^*_T)[\tilde h] = 0$. By the injectivity established in Step 2, we confirm that $\tilde h = 0$, and $\tilde h_j$ converges weakly in $L^2_0$ to $0$. Moreover, we abuse notations and denote the trace of $\cS_T[\phi]$ on $\partial T$ still by $\cS_T[\phi]$. It is clear that, from the properties of \eqref{eq:Gamma}, $\cS_T$ is a compact linear transform on $L^2(\partial T)$, and $\cS_T$ is self-adjoint. In particular, we have
\begin{equation*}
\int_{\partial T} \cS_T[\tilde h_j]\cdot e_k = \langle \tilde h_j, \cS_T[e_k]\rangle_{L^2(\partial T)} \to 0, \qquad \text{as $j \to \infty$}. 
\end{equation*} 
Now we use Lemma \ref{lem:tractionbdd} (this can be done for $d\ge 2$, as $\tilde h_j \in L^2_0$), by the above convergence and by the strong convergence in \eqref{eq:lem:Kstar-3}, we deduce that
\begin{equation*}
(\frac12 I + \cK^*_T)[\tilde h_j] \to 0 \quad \text{strongly in $L^2$ as $j\to \infty$}.
\end{equation*}
Combine this with \eqref{eq:lem:Kstar-3} again, we have shown that $\tilde h_j$ converges strongly to $0$ in $L^2$. It should follow that $\|\tilde h_j\| \to 0$, but this is a contradiction with \eqref{eq:lem:Kstar-3}. Hence, $\{h_j\}$ cannot be unbounded, and the conclusion of this step holds.
\end{proof}

\medskip

\begin{proof}[Proof of Lemma \ref{lem:key}] The closedness of $\ran(-\frac12 I + \cK^*_T)$ is established in Lemma \ref{lem:cKsL20}, and by duality, $\ran(-\frac12 I + \cK_T)$ is also closed. We prove rest of the conclusions in Lemma \ref{lem:key} in several steps.

\emph{Step 1:} We show that $\ker(-\frac12 I + \cK_T)$ and $\ker(-\frac12 I + \cK^*_T)$ both have dimension $d$, and characterize the first space.

Since $-\frac12 I + \cK^*_T : L^2_0 \to L^2_0$, and since $L^2_0(\partial T)$ has codimension $d$, we deduce that $\dim \ker(-\frac12 I + \cK^*_T) \le d$. On the other hand, Lemma \ref{lem:kerK} shows $\dim \ker(-\frac12 I + \cK_T) \ge d$. Now that both $-\frac12 I + \cK^*_T$ and $-\frac12 I + \cK_T$ have closed ranges, and their difference forms a compact operator (see the discussions below formula \eqref{eq:Kdifference}), we conclude, using Lemma \ref{lem:semifred}, that
\begin{equation*}
\dim \ker(-\frac12 I + \cK_T) = \dim \ker(-\frac12 I + \cK^*_T).
\end{equation*}
Those dimensions then must equal to $d$. In particular, we have $\ker(-\frac12 I + \cK_T) = \mathbb{V}_0$. As a byproduct, we also have $\ran(-\frac12 I + \cK^*_T) = \mathbb{V}_0^\perp = L^2_0(\partial T)$, and $-\frac12 I + \cK^*_T$, when restricted to $L^2_0(\partial T)$, is a bijection.

\emph{Step 3}: We establish the direct-sum decomposition (not orthogonal in general)
\begin{equation}
\label{eq:decstar}
L^2(\partial T) = \ran(-\frac12 I + \cK^*_T) \oplus \ker(-\frac12 I + \cK^*_T).
\end{equation}
Since the codimension of the first space matches the dimension of the second space, it remains to show their intersection contains only $\{0\}$. This is essentially proved by Step 2 in the proof of Lemma \ref{lem:cKsL20}.

\smallskip

\emph{Step 4}: We establish the direct-sum decomposition in item (2) of Lemma \ref{lem:key}, which, again, is not orthogonal in general. This follows directly from the decomposition in the previous step, and from the orthogonal decomposition
\begin{equation*}
L^2(\partial T) = \ran(-\frac12 I + \cK_T)\oplus \ker(-\frac12 I + \cK^*_T) = \ran(-\frac12 I + \cK^*_T)\oplus \ker(-\frac12 I + \cK_T).
\end{equation*}
This completes the proof.
\end{proof}

The following fact is a direct consequence of the proofs above.
\begin{corollary}
The operator $-\frac12 I + \cK_T : L^2_0 \to \ran(-\frac12 I + \cK_T)$ is invertible.
\end{corollary}

Our next goal is to derive a formula for the decomposition of $L^2(\partial T)$ stated in Lemma \ref{lem:key}. 

We have seen $\ker(-\frac12 I + \cK_T)$ and $\ker(-\frac12 I + \cK^*_T)$ both have dimension $d$. Following an argument in \cite[Theorem 2.26]{AmmKan} which treated layer potentials for the Laplace equation, we consider a mapping between $\ker(-\frac12 I + \cK^*_T)\times \R^d$ and $\ker(-\frac12 I + \cK_T) \times \R^d$. Both of them are product Hilbert space of dimension $2d$, and both are equipped with the standard inner product. The mapping is:
\begin{equation*}
\begin{aligned}
\mathcal{A}_T \quad \,:\,\quad \ker(-\frac12 I + \cK^*_T) \times \R^d \quad &\to \quad \ker(-\frac12 I + \cK_T) \times \R^d,\\
(\varphi, a) \quad &\mapsto \quad (\cS_T[\varphi]+a, \int_{\partial T} \varphi).
\end{aligned}
\end{equation*}
Here, the notation $\cS_T$ is abused to denote the trace on $\partial T$ of the single-layer potential. The mapping is well defined because, if $\phi \in \ker(-\frac12 I + \cK^*_T)$, then by the Green's identity \eqref{eq:greenTm}, $\cS_T[\phi]$ must be a constant in $\ol T$.

We claim that $\mathcal{A}_T$ is a bijection. It suffices to check the injectivity. Suppose $(\varphi,a)$ is such that $\varphi \in \ker(-\frac12 I + \cK^*_T)$ and $a \in \R^d$, and
\begin{equation*}
\int_{\partial T} \varphi = 0, \qquad \cS_T[\varphi] + a = 0.
\end{equation*}
By the decomposition \eqref{eq:decstar}, we conclude that $\varphi = 0$, and then $a = 0$. This proves the claim.

\begin{remark}
A very similar argument actually shows that, for $d\ge 3$, the mapping

\begin{equation*}
\begin{aligned}
\cS_T \quad:\quad \ker(-\frac12 I + \cK^*_T) \; &\to \; \mathbb{V}_0 = \ker(-\frac12 I + \cK_T)\\
\phi \; &\mapsto \; \cS_T[\phi]\rvert_{\partial T}.
\end{aligned}
\end{equation*}
is also a bijection. This is not true, in general, for $d = 2$. We will come back to this point.
\end{remark}

Now, for each $j=1,\dots,d$, consider the vector $(0,e_j)$ which is in the range of $\mathcal{A}_T$, we can find a unique pair $(\phi^*_j,a^*_j)$, with $\phi^*_j \in \ker(-\frac12 I + \cK^*_T)$ and $a^*_j \in \R^d$, as the preimage of $(0,e_j)$, i.e.
\begin{equation}
\label{eq:phistar}
\cS_T[\phi^*_j] = -a^*_j \; \text{ on } \; \ol T, \quad\text{and}\quad \int_{\partial T} \phi^*_j = e_j.
\end{equation}
Clearly, $\{\phi^*_j\}$ form a basis for $\ker(-\frac12 I + \cK^*_T)$.  Let $A_T$ be the matrix with $a^*_j$'s as columns, i.e. $A_T$ is defined by \eqref{eq:ATdef}. It has the following nice properties.

\begin{proposition}
\label{prop:AT}
For $d \ge 2$, the matrix $A_T$ is symmetric. For $d\ge 3$, $A_T$ is positive definite.
\end{proposition}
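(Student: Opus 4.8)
\textbf{Proof plan for Proposition \ref{prop:AT}.}

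The plan is to extract the matrix entries of $A_T$ as a bilinear pairing and then read off symmetry from the second Green's identity and positivity from the energy identity. Concretely, for each pair $i,j$ I would compute $(a^*_j)^i = e_i \cdot a^*_j = -e_i \cdot \cS_T[\phi^*_j]\rvert_{\partial T}$. Since $\phi^*_j \in \ker(-\frac12 I + \cK^*_T)$, the function $u_j := \cS_T[\phi^*_j]$ is a constant (equal to $-a^*_j$) on $\ol T$, satisfies $\cL^{\lambda,\mu}[u_j] = 0$ in both $T_\pm$, has conormal derivative $\frac{\partial u_j}{\partial \nu}\rvert_- = 0$ and $\frac{\partial u_j}{\partial\nu}\rvert_+ = \phi^*_j$ by the jump formula \eqref{eq:traceuN}, and $\int_{\partial T}\phi^*_j = e_j$. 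The key observation is then
\begin{equation*}
(a^*_j)^i = -a^*_j \cdot e_i = \int_{\partial T} u_j\rvert_{\partial T}\cdot \phi^*_i = \int_{\partial T} u_j \cdot \frac{\partial u_i}{\partial \nu}\Big\rvert_+,
\end{equation*}
where I used that $u_j\rvert_{\partial T} = -a^*_j$ is constant, $\int_{\partial T}\phi^*_i = e_i$, and that $\frac{\partial u_i}{\partial\nu}\rvert_+ = \phi^*_i$.

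\textbf{Symmetry.} I would now apply the second Green's identity \eqref{eq:GreenII} on the exterior domain $T_+$ to the pair $u_i, u_j$, both of which solve the homogeneous Lam\'e system there and decay at infinity (for $d = 2$ one uses the mean-zero property $\int_{\partial T}\phi^*_i = e_i$... here $e_i \ne 0$, so more care is needed: actually for $d=2$ one should instead work on $T_-$, where $u_i$ and $u_j$ are constants, which makes \eqref{eq:GreenII} trivially $0 = 0$, and symmetry of $A_T$ for $d=2$ must be argued separately, e.g. via the exterior identity after subtracting the logarithmic part, or directly from the fact that $\cS_T$ restricted to $\ker(-\frac12 I + \cK_T^*)$ is represented by a symmetric kernel). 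For $d \ge 3$ the exterior Green's identity gives $\int_{\partial T} u_j\cdot \frac{\partial u_i}{\partial\nu}\rvert_+ = \int_{\partial T} u_i \cdot \frac{\partial u_j}{\partial\nu}\rvert_+$, since the bulk terms vanish; this reads $(a^*_j)^i = (a^*_i)^j$, i.e. $A_T = A_T^t$. For $d=2$, since the mean-zero issue blocks the direct exterior argument, the cleanest route is to note $(a^*_j)^i = \langle \phi^*_i, \cS_T[\phi^*_j]\rangle_{L^2(\partial T)}$ up to sign and invoke that $\cS_T$ is self-adjoint on $L^2(\partial T)$ (stated in the excerpt), giving symmetry for all $d \ge 2$ in one stroke; I would present this self-adjointness argument as the main line and use the Green's-identity computation only as a cross-check.

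\textbf{Positive definiteness for $d\ge 3$.} For a nonzero vector $\xi = (\xi^j) \in \R^d$, set $u = \xi^j u_j = \cS_T[\xi^j\phi^*_j]$ and $\phi = \xi^j \phi^*_j$. Then
\begin{equation*}
\xi^t A_T \xi = \xi^i\xi^j (a^*_j)^i = \int_{\partial T} u \cdot \frac{\partial u}{\partial\nu}\Big\rvert_+ = -\int_{T_+} (\lambda+\mu)(\div u)^2 + \mu|\nabla u|^2 \le 0
\end{equation*}
by the exterior energy identity \eqref{eq:greenTp}; wait — this gives $\xi^t A_T\xi \le 0$, so in fact $-A_T$ is positive semidefinite, and I must recheck the sign conventions in \eqref{eq:phistar} and \eqref{eq:greenTp}. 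Assuming the signs work out so that $\xi^t A_T \xi = \int_{T_+}(\lambda+\mu)(\div u)^2 + \mu|\nabla u|^2 \ge 0$ (which is the intended statement; the sign in $\cS_T[\phi^*_j] = -a^*_j$ and in the $\pm$ of the conormal jump must be tracked carefully), the right-hand side is strictly positive unless $\nabla u \equiv 0$ on $T_+$, i.e. $u$ is constant on $T_+$. Combined with $u$ being constant on $T_-$, continuity of $\cS_T$ across $\partial T$, and the decay $u(x) \to 0$ as $|x|\to\infty$ (valid for $d \ge 3$), we get $u \equiv 0$ on all of $\R^d$, whence $\phi = \frac{\partial u}{\partial\nu}\rvert_+ - \frac{\partial u}{\partial\nu}\rvert_- = 0$; but $\int_{\partial T}\phi = \xi^j e_j = \xi \ne 0$, a contradiction. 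Hence $\xi^t A_T\xi > 0$, proving positive definiteness. For $d=2$ this last step fails precisely because $\cS_T[\phi]$ need not decay (it grows logarithmically unless $\phi \in L^2_0$, and here $\int_{\partial T}\phi = \xi \ne 0$), which is exactly the source of the possible degeneracy of $A_T$ flagged in \eqref{eq:AT2d}.

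\textbf{Main obstacle.} The principal subtlety is bookkeeping of signs and of the interior-versus-exterior domain in the two-dimensional case: the Green's identities \eqref{eq:greenTm}–\eqref{eq:greenTp} and the Rellich-type applications require $\phi \in L^2_0(\partial T)$ when $d = 2$, but $\phi^*_j$ has $\int_{\partial T}\phi^*_j = e_j \ne 0$, so for symmetry in $d=2$ I would lean on the self-adjointness of $\cS_T$ on $L^2(\partial T)$ rather than on a Green's-identity manipulation, and I would be explicit that positive definiteness is genuinely a $d \ge 3$ phenomenon tied to the decay of the fundamental solution.
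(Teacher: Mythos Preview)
Your proposal is correct and follows essentially the same route as the paper: symmetry via the self-adjointness of $\cS_T$ on $L^2(\partial T)$ (the paper writes $(a^*_j)^i = -\langle \phi^*_i,\cS_T[\phi^*_j]\rangle_{L^2(\partial T)}$ and swaps using $\cS_T = \cS_T^*$), and positive definiteness for $d\ge 3$ via the exterior energy identity \eqref{eq:greenTp} applied to $u=\cS_T[c^i\phi^*_i]$, concluding strict positivity because $\nabla u\equiv 0$ on $T_+$ forces $\phi=0$ and hence $c=0$.

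The sign glitch you flagged is a one-character slip: you wrote $(a^*_j)^i = -a^*_j\cdot e_i$, but of course the $i$-th component of $a^*_j$ is $+e_i\cdot a^*_j$. Since $u_j\rvert_{\partial T}=-a^*_j$, the correct chain is $(a^*_j)^i = e_i\cdot a^*_j = -\int_{\partial T} u_j\cdot\phi^*_i = -\int_{\partial T} u_j\cdot\frac{\partial u_i}{\partial\nu}\big\rvert_+$, so $\xi^t A_T\xi = -\int_{\partial T} u\cdot\frac{\partial u}{\partial\nu}\big\rvert_+ = \int_{T_+}(\lambda+\mu)(\div u)^2+\mu|\nabla u|^2\ge 0$ by \eqref{eq:greenTp}, exactly as the paper has it. Your detour through the exterior Green's identity for symmetry is unnecessary once you commit to the $\cS_T$-self-adjointness line, which handles all $d\ge 2$ uniformly; the paper does not invoke \eqref{eq:GreenII} here at all.
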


\begin{proof} We can write the component of $A_T$ as
\begin{equation*}
(a^*_j)^i = -e_i \cdot \cS_T[\phi^*_j] =  -(\int_{\partial T} \phi^*_i) \cdot \cS_T[\phi^*_j] = -\langle \phi^*_i, \cS_T[\phi^*_j]\rangle_{L^2(\partial T)}.
\end{equation*}
Using the fact that $\cS_T$ is self-adjoint, we can rewrite the right hand side as $-\langle \cS_T[\phi^*_i],\phi^*_j\rangle_{L^2(\partial T)}$, which is, according to the formula above, $(a^*_i)^j$. Hence, $A_T$ is symmetric.

Now we impose the condition $d\ge 3$. To check that $A_T$ is positive definite, consider any vector $c = (c^i) \in \R^d$ and we compute that
\begin{equation*}
(A_T c) \cdot c = -\int_{\partial T} \phi \cdot \cS_T[\phi],
\end{equation*} 
where $\phi = c^i \phi^*_i$ which belongs to $\ker(-\frac12 I + \cK^*_T)$. Let $u = \cS_T[\phi]$ in $\R^d$, we can recast the above identity as
\begin{equation*}
(A_T c) \cdot c = -\int_{\partial T} \frac{\partial u}{\partial \nu}\Big\vert_+ \cdot u.
\end{equation*}
In $d \ge 3$, we can apply the Green's identity \eqref{eq:greenTp} and conclude that
\begin{equation*}
(A_T c) \cdot c = \int_{\R^d\setminus \ol T} \mu |\nabla u|^2 + (\lambda + \mu)(\div u)^2.
\end{equation*}
The right hand side is non-negative, and it vanishes if and only if $u = \cS_T[\phi]$ is a constant on $T_+$, which would imply $\phi = c^i \phi^*_i = 0$, and finally $c = 0$. This shows $A_T$ is positive definite for $d\ge 3$.
\end{proof}

\begin{remark}\label{rem:AT2d} 
For $d = 2$, the matrix $A_T$ can be degenerate. In fact, there is an abnormal rescaling for $\cS_T$, which is due to the logarithmic term in $\Gamma_k$. Indeed, for $d = 2$, we note from \eqref{eq:Gamma} that, for any $r > 0$,
\begin{equation*}
\Gamma^j_k(\frac{x}{r}) = \Gamma^j_k(x) - \frac{c_1}{2\pi}(\log r) \delta_{jk}.
\end{equation*}
We then have
\begin{equation*}
\begin{aligned}
(\cS_T[\phi])(x) =& \int_{\partial T} \Gamma^j_k(\frac{x-y}{r}) \phi^j(y) dy + \frac{c_1}{2\pi}(\log r)\int_{\partial T} \phi\\
=& r\int_{\partial (\frac1r T)} \Gamma^j_k(\frac{x}{r} - z)\phi^j(rz) dz + \frac{c_1}{2\pi}(\log r)\int_{\partial T} \phi\\
=& r\cS_{\frac1r T} [\phi(r\cdot)](\frac{x}{r}) + \frac{c_1}{2\pi}(\log r)\int_{\partial T} \phi.
\end{aligned}
\end{equation*}
Consider the $\phi^*_j$'s in \eqref{eq:phistar}, and let $\phi^*_{j,r} \in L^2(\partial(\frac1r T))$ be the rescaled function
\begin{equation*}
\phi^*_{j,r}(z) = r \phi^*_j(r z), \qquad z \in \frac1r T.
\end{equation*}
Then we can check that
\begin{equation*}
\int_{\partial (\frac1r T)} \phi^*_{j,r}(z) dz = \int_{\partial T} \phi^*_j(y) dy = e_j,
\end{equation*}
and meanwhile, due to the homogeneity (of degree $-1$) of the integral kernel $K^*_{ik}$, we also have
\begin{equation*}
(-\frac12 I + \cK^*_{\frac{1}{r} T})[\phi^*_{j,r}](z) = r(-\frac12 I + \cK^*_T)[\phi](rz), \qquad z \in \frac{1}{r} \ol T.
\end{equation*}
In particular, $\phi^*_{j,r}$'s belong to $\ker(-\frac12 I + \cK^*_{\frac1r T})$. Finally, from the rescaling formula of $\cS_T$, we found that
\begin{equation*}
A_{rT} = A_T + \frac{c_1}{2\pi}(\log r) I, \qquad r > 0.
\end{equation*}
From this relation, we can see that, given a shape $T$, there always exist one or two $r > 0$ such that $A_{rT}$ can be degenerate, and there are at most two such $r$.

As a consequence, for $d=2$ and when the homogenization of \eqref{eq:hetlame} is considered for the dilute case, we can always assume $\det A_T \ne 0$. Indeed, if this fails, we can replace it by $r_0T$ for $r_0$ slightly less than one so that $\det A_{r_0T} \ne 0$. Because we are interested in $\eps \to 0$ only, the geometric set-up of the homogenization problem does not change once we replace $\eta$ by $\eta/r_0$.
\end{remark}

Finally, the proof above provides a formula for the decomposition.

\begin{lemma}\label{lem:L2decom}
Suppose $d\ge 2$, $T \subseteq \R^d$ is an open bounded set satisfying (A1) and (A2); for $d = 2$, we further assume \eqref{eq:AT2d}. Let $\{\phi^*_j\}$ be defined by \eqref{eq:phistar}.
Let $\Pi_0 : L^2(\partial T) \to \ker(-\frac12 I + \cK_T)$ and $\Pi_1 := I - \Pi_0$ be the projection operators to $\ker(-\frac12 I + \cK_T)$ and to $\ran(-\frac12 I + \cK_T)$. That is, for $\phi \in L^2(\partial T)$, $(\Pi_0[\phi],\Pi_1[\phi])$ be the unique pair such that
\begin{equation*}
\phi = \Pi_0[\phi] + \Pi_1[\phi], \quad \text{with} \quad \Pi_0[\phi] \in \ker(-\frac12 I + \cK_T), \; \Pi_1[\phi] \in \ran(-\frac12 I + \cK_T).
\end{equation*}
Then we have
\begin{equation*}
(\Pi_0[\phi])^k = \langle \phi^*_k,\phi\rangle.
\end{equation*}
\end{lemma}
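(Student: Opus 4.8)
The plan is to exploit the biorthogonality between the fixed basis $\{\phi^*_j\}$ of $\ker(-\frac12 I + \cK^*_T)$ and the basis $\{e_j\}$ of $\ker(-\frac12 I + \cK_T) = \mathbb{V}_0$, together with the fact, already available, that $\ran(-\frac12 I + \cK_T)$ is precisely the orthogonal complement of $\ker(-\frac12 I + \cK^*_T)$. No serious obstacle is expected; the one point that needs care is the identification $\ran(-\frac12 I + \cK_T) = \ker(-\frac12 I + \cK^*_T)^\perp$, which is legitimate only because the range has been shown to be closed in Lemma~\ref{lem:key} (via Lemma~\ref{lem:cKsL20} and duality).

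First I would record this identification. Since $(-\frac12 I + \cK_T)^* = -\frac12 I + \cK^*_T$ and $\ran(-\frac12 I + \cK_T)$ is closed, the standard closed-range relation $\overline{\ran \, S} = (\ker S^*)^\perp$ gives $\ran(-\frac12 I + \cK_T) = \ker(-\frac12 I + \cK^*_T)^\perp$. Because $\{\phi^*_j\}_{j=1}^d$ spans $\ker(-\frac12 I + \cK^*_T)$, this says: every $g \in \ran(-\frac12 I + \cK_T)$ satisfies $\langle \phi^*_k, g\rangle_{L^2(\partial T)} = 0$ for $k = 1,\dots,d$.

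Next, fix $\phi \in L^2(\partial T)$ and decompose $\phi = \Pi_0[\phi] + \Pi_1[\phi]$ as in the statement. Since $\Pi_1[\phi] \in \ran(-\frac12 I + \cK_T)$, the observation above yields $\langle \phi^*_k, \Pi_1[\phi]\rangle = 0$, hence $\langle \phi^*_k, \phi\rangle = \langle \phi^*_k, \Pi_0[\phi]\rangle$. Now $\Pi_0[\phi] \in \mathbb{V}_0$, so $\Pi_0[\phi] = \sum_{\ell} (\Pi_0[\phi])^\ell e_\ell$ with constant coefficients, and the normalization $\int_{\partial T} \phi^*_k = e_k$ from \eqref{eq:phistar} gives $\langle \phi^*_k, e_\ell\rangle_{L^2(\partial T)} = \int_{\partial T} \phi^*_k \cdot e_\ell = \big(\int_{\partial T} \phi^*_k\big)^\ell = (e_k)^\ell = \delta_{k\ell}$. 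Combining these, $\langle \phi^*_k, \phi\rangle = (\Pi_0[\phi])^k$, which is the asserted formula.

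As a consistency check one can instead argue directly that the map $R : \phi \mapsto \sum_k \langle \phi^*_k, \phi\rangle e_k$ is the projection $\Pi_0$: by the $\delta_{k\ell}$ computation above $R$ fixes every $e_j$, so it restricts to the identity on $\mathbb{V}_0 = \ker(-\frac12 I + \cK_T)$; by the first paragraph $R$ annihilates $\ran(-\frac12 I + \cK_T)$; hence $R$ agrees with $\Pi_0$ on each summand of the direct-sum decomposition of Lemma~\ref{lem:key}, and therefore on all of $L^2(\partial T)$. Either route completes the proof.
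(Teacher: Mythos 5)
Your proof is correct and takes the same route the paper leaves implicit: it combines the orthogonal decomposition $L^2(\partial T)=\ran(-\tfrac12 I+\cK_T)\oplus\ker(-\tfrac12 I+\cK^*_T)$ (available because the range is closed) with the biorthogonality $\langle\phi^*_k,e_\ell\rangle=\delta_{k\ell}$ coming from the normalization $\int_{\partial T}\phi^*_k=e_k$ in \eqref{eq:phistar}. The paper states the lemma without writing out these details, so your argument supplies exactly the expected verification; note also that the assumption \eqref{eq:AT2d} in the $d=2$ case plays no role in this particular computation, which you implicitly observed by never invoking it.
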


\subsection{Periodic layer potentials}

To solve the cell problem, we use periodic layer potentials. They are variants of the aforementioned layer potentials adapted for Lam\'e systems in the torus $\bT^d$, or in the rescaled torus $\eta^{-1}\bT^d$. In this subsection, assumptions (A1), (A2) and (A3) are all invoked.

We start with the unit torus, and consider the fundamental solution $G_k(x)$ that solves
\begin{equation}
\label{eq:pLame}
\cL^{\lambda,\mu} [G_k](x) =  (\delta_0(x) -1) e_k, \qquad \text{in } \bT^d,
\end{equation}
with the normalization condition
\begin{equation*}
\int_{\bT^d} G_k(x) = 0.
\end{equation*}
It is straightforward to check that, for each $k = 1,\dots,d$, there is a unique solution, $G_k$ is smooth in $\bT^d\setminus \{0\}$. Moreover, $G_k$ can be viewed as a ``perturbation'' of the free-space solution $\Gamma_k$, in the sense that there exists a unique $R_k(x) \in C^\infty([-\frac12,\frac12]^d)\cap C(\bT^d)$, such that
\begin{equation*}
G_k(x) = \Gamma_k(x) + R_k(x), \qquad \forall x \in \bT^d\setminus \{0\}.
\end{equation*}
In fact, derivatives of $R_k$ do not satisfy periodicity, so $R_k$ is not an element of $C^1(\bT^d)$. For rather explicit Fourier representations for $R_k$, we refer to \cite{AmmKan}.

On the rescaled torus $\eta^{-1}\bT^d$, we define the rescaled function
\begin{equation}
\label{eq:Getapert}
G^\eta_k(x) = \eta^{d-2} G_k(\eta x) = \Gamma_k(x) + \eta^{d-2} R_k(\eta x).
\end{equation}
Note that for $d=2$, we abuse notations and have subtracted a constant term of the form $\frac{c_1}{2\pi}(\log \eta) e_k$ in the second equality. In view of the scaling property of the Dirac distribution, we check that $G^\eta_k$ solves the problem
\begin{equation}
\label{eq:pLameeta}
\cL^{\lambda,\mu} [G^\eta_k](x) = (\delta_0(x) -\eta^d) e_k, \qquad \text{in } \eta^{-1}\bT^d.
\end{equation}
Using those fundamental solutions, we define the periodic single-layer potential associated to $T$, for $\phi \in L^2(\partial T)$, by
\begin{equation*}
(\cS_T^\eta[\phi])^k(x) = \int_{\partial T} G^\eta_k(x;y)\cdot \phi(y) dy, \qquad x\in \eta^{-1}\bT^d\setminus \partial T,
\end{equation*}
and define the periodic double-layer potential by
\begin{equation*}
(\cD^\eta_T [\phi])^k (x) = \pv \int_{\partial T} \frac{\partial G^\eta_k(\eta(x-y))}{\partial \nu_y} \cdot \phi(y) dy.
\end{equation*}
It is important to point out that $\cL^{\lambda,\mu}[\cS^\eta_T[\phi]] = 0$ in $T$ and in $\frac1\eta \bT^d\setminus \ol T$ only for $\phi \in L^2_0(\partial T)$; on the other hand, $\cL^{\lambda,\mu}[\cS^\eta_T[\phi]] = 0$ away from $\partial T$ for all $\phi \in L^2$.

In view of the decomposition of $G^\eta_k$, we can write
\begin{equation*}
\cS_T^\eta = \cS_T + \eta^{d-2} \cS^\eta_{T,1}, \qquad \cS^\eta_{T,1}[\phi] = \int_{\partial T} R_k(\eta(x-y))\cdot \phi(y) dy.
\end{equation*}
Because $R_k(\eta(x-y))$ is uniformly bounded with respect to $\eta,x$ and $y$, the operator $\cS^\eta_{T,1}$ is uniformly bounded (in $\eta$) and compact on $L^2(\partial T)$. Moreover, because $\nabla R_k$ is uniformly bounded, $\cS^\eta_{T,1}$ can be differentiated.
We then have the following trace formulas
\begin{equation*}
\frac{\partial \cS^\eta_T[\phi]}{\partial \nu}\Big\rvert_\pm (x) = \left(\pm\frac12 I + \cK^{\eta,*}_T\right)[\phi], \qquad x\in \partial T,
\end{equation*}
where $\cK^{\eta,*}_{T} = \cK^*_T + \eta^{d-1}\cK^{\eta,*}_{T,1}$ and
\begin{equation*}
\cK^{\eta,*}_{T,1}[\phi] = \int_{\partial T} (\lambda+\mu)(\nabla\cdot R_k)(\eta(x-y))\langle N_x,\phi(y)\rangle + \mu (N_x\cdot \nabla R_k(\eta(x-y)))\cdot \phi(y) \,dy.
\end{equation*}
In particular, $\cK^{\eta,*}_{T,1}$ is a compact operator on $L^2(\partial T)$ that is uniformly bounded in $\eta$.

\smallskip

Similarly, for the double-layer potential, we also have
\begin{equation*}
\cD^\eta_T = \cD_T + \eta^{d-1} \cD^\eta_{T,1},
\end{equation*}
where the perturbation operator $\cD^\eta_{T,1}$ is defined by
\begin{equation*}
\cD^\eta_{T,1}[\phi](x) = -\int_{\partial T} \left[(\lambda+\mu)(\nabla\cdot R_k)(\eta(x-y))N_y + (\mu N_y\cdot \nabla R_k)(\eta(x-y))\right]\cdot \phi(y) dy.
\end{equation*}
The trace formulas are
\begin{equation*}
\cD^\eta_T[\phi]\Big\rvert_\pm (x) = \left(\pm\frac12 I + \cK^{\eta}_T\right)[\phi], \qquad x\in \partial T,
\end{equation*}
where $\cK^{\eta}_{T} = \cK_T + \eta^{d-1}\cK^{\eta}_{T,1}$ and $\cK^\eta_{T,1}$ is simply the restriction of $\cD^\eta_{T,1}$ on $\partial T$. Again, because $\nabla R_k$ is uniformly bounded in $[-\frac12,\frac12]^d$, the integral kernel above is bounded and the resulted operator is compact in $L^2(\partial T)$ and its operator norm is uniformly bounded.

The trace formulas for $\cD^\eta_T$ can be used to solve the Dirichlet boundary value problems, namely \eqref{eq:chieta}. The following facts will be useful.

\begin{theorem}
\label{thm:perK}
For the operators $-\frac12 I + \cK^\eta_T$ and $-\frac12 I + \cK^{\eta,*}_T$, the following holds.
\begin{itemize}
\item[(1)] For each $\ell = 1,\dots,d$, $(-\frac12 I + \cK^\eta_T)[e_\ell] = -\eta^d|T| e_\ell$.
\item[(2)] The operators $-\frac12 I + \cK^{\eta,*}_T$ and $-\frac12 I + \cK^\eta_T$ are bijections in $L^2(\partial T)$. 
\end{itemize} 
\end{theorem}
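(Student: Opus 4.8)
The strategy is to leverage Lemma~\ref{lem:key} (the non-periodic Fredholm picture) together with the decomposition $\cK^\eta_T = \cK_T + \eta^{d-1}\cK^\eta_{T,1}$, where $\cK^\eta_{T,1}$ is compact and uniformly bounded in $\eta$, and similarly for the adjoint. Part (1) is a direct computation: apply the trace formula for $\cD^\eta_T$ to a constant field $e_\ell$, or equivalently use the periodic Green's identity \eqref{eq:GreenI} with $u = G^\eta_k$ and $v = e_\ell$. Since $G^\eta_k$ solves \eqref{eq:pLameeta}, the bulk term $\int v\cdot \cL^{\lambda,\mu}[u]$ over $T$ contributes $-\eta^d|T|e_\ell$ rather than zero (the $\delta_0$ is outside $T$ when $x\in\partial T$ is approached from $T_+$, and inside when approached from $T_-$), while the gradient bilinear form vanishes since $e_\ell$ is constant. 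Tracking the jump relation $\cD^\eta_T[e_\ell]|_\pm = (\mp\tfrac12 I + \cK^\eta_T)[e_\ell]$ then yields $(-\tfrac12 I + \cK^\eta_T)[e_\ell] = -\eta^d|T|e_\ell$.

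For part (2), I would first establish that $-\tfrac12 I + \cK^\eta_T$ is Fredholm of index zero. This follows because $-\tfrac12 I + \cK_T$ has closed range and both kernel and cokernel of dimension $d$ (Lemma~\ref{lem:key}), hence is Fredholm of index zero, and $\eta^{d-1}\cK^\eta_{T,1}$ is a compact perturbation; Fredholmness and index are stable under compact perturbations. The same holds for the adjoint operator, and since the two are adjoints of one another in $L^2(\partial T)$, it suffices to prove injectivity of one of them — I would work with $-\tfrac12 I + \cK^{\eta,*}_T$. Suppose $\phi \in L^2(\partial T)$ with $(-\tfrac12 I + \cK^{\eta,*}_T)[\phi] = 0$. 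Set $u = \cS^\eta_T[\phi]$ on the rescaled torus $\eta^{-1}\bT^d$. The trace formula gives $\partial u/\partial\nu|_- = 0$; applying the periodic Green's identity \eqref{eq:GreenI} over $T_-$ (where $u$ solves the homogeneous system provided we first check $\phi\in L^2_0$) yields that $u$ is constant on $\ol T$.

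The crux is the global (torus-side) argument, and this is where the periodic setting genuinely differs from the whole-space case and where I expect the main obstacle. One must show $\phi \in L^2_0(\partial T)$: integrating $(-\tfrac12 I + \cK^{\eta,*}_T)[\phi] = 0$ against $e_\ell$ and using part (1) together with the duality $\langle(-\tfrac12 I + \cK^{\eta,*}_T)[\phi], e_\ell\rangle = \langle \phi, (-\tfrac12 I + \cK^\eta_T)[e_\ell]\rangle = -\eta^d|T|\langle \phi, e_\ell\rangle$ forces $\int_{\partial T}\phi = 0$. Now $u$ solves $\cL^{\lambda,\mu}[u] = 0$ on all of $\eta^{-1}\bT^d\setminus\partial T$ (the mean-zero condition on $\phi$ removes the constant source term in $\cL^{\lambda,\mu}[\cS^\eta_T[\phi]]$). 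Applying the periodic Green's identity over $T_+ = \eta^{-1}\bT^d\setminus\ol T$ — which is legitimate because on the torus there is no decay issue at infinity — gives
\begin{equation*}
\int_{\partial T} u\cdot \frac{\partial u}{\partial\nu}\Big\rvert_+ = -\int_{T_+}\mu|\nabla u|^2 + (\lambda+\mu)(\div u)^2,
\end{equation*}
and since $u$ is constant on $\ol T$, the left side equals $\langle u\rangle_{\partial T}\cdot\int_{\partial T}\partial u/\partial\nu|_+$; but $\int_{\partial T}\partial u/\partial\nu|_+ - \int_{\partial T}\partial u/\partial\nu|_- = \int_{\partial T}\phi = 0$ and the $|_-$ integral vanishes, so the left side is zero. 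Hence $\nabla u \equiv 0$ on $T_+$, so $u$ is locally constant on the connected set $T_+$, matching the constant value on $\ol T$ by continuity of $\cS^\eta_T$; thus $u$ is globally constant, its conormal derivative from $T_+$ vanishes, and $\phi = \partial u/\partial\nu|_+ - \partial u/\partial\nu|_- = 0$. (The same argument with minor modifications, or a direct duality argument using that $\ker$ and $\mathrm{coker}$ have equal dimension, handles $-\tfrac12 I + \cK^\eta_T$; alternatively its kernel is computed directly from part (1) to be trivial since $-\eta^d|T| \ne \tfrac12$... more carefully, one uses injectivity of the adjoint plus index zero.) The subtle point to watch is the $d=2$ case: $\cS^\eta_T[\phi]$ on the torus does not suffer the whole-space logarithmic growth, so the Green's identities apply uniformly for all $d\ge 2$ once $\phi\in L^2_0$ is secured, which is precisely why the periodic operators are bijective even though $A_T$ may degenerate in the whole-space $d=2$ picture.
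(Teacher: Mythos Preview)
Your proposal is correct and follows essentially the same route as the paper's proof. For item (1) you both compute $\cD^\eta_T[e_\ell]$ via Green's identity using that $\cL^{\lambda,\mu}[G^\eta_k(x-\cdot)] = (\delta_x - \eta^d)e_k$ and then read off the trace; for item (2) you both first use item (1) and duality to force $\phi \in L^2_0(\partial T)$, then run Green's identities on $T_-$ and on $\eta^{-1}\bT^d\setminus\ol T$ to conclude $\cS^\eta_T[\phi]$ is constant and hence $\phi = 0$, and finally transfer injectivity to bijectivity via a Fredholm/index argument (you phrase it as ``index zero preserved under compact perturbation'', the paper phrases it via closed range plus Lemma~\ref{lem:semifred}, which is the same content).
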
 

\begin{proof}
Item (1) is a direct computation and follows from the Green's identity in the domain $\eta^{-1}\bT^d\setminus \ol T$. To be more precise, note that $e_\ell$ as a function solves the homogeneous Lam\'e system in $\eta^{-1}\bT^d$; it follows that, for $x \in T$,
\begin{equation*}
\int_{\partial T} \frac{\partial G^\eta_k}{\partial \nu_y}(x;y) \cdot e_\ell = \int_T e_\ell \cdot \cL^{\lambda,\mu} [G^\eta_k(x-\cdot)]  = \int_{T} (\delta_x(y)-\eta^d) e_k\cdot e_\ell = (1-\eta^d)|T| \delta_{j\ell}.
\end{equation*}
By the trace formula, we get
\begin{equation*}
(\frac12 I + \cK^\eta_T)[e_\ell] = (1-\eta^d)|T|e_\ell, \qquad (-\frac12 I + \cK^\eta_T)[e_\ell] = -\eta^d|T| e_\ell.
\end{equation*}
In particular, for any $\eta > 0$, non-zero elements in $\ker(-\frac12 I + \cK_T)$ is no longer in $\ker(-\frac12 I + \cK^\eta_T)$.

\smallskip

Suppose $\phi \in \ker(-\frac12 I + \cK^{\eta,*}_T)$, then from item (1) it follows that $\phi \in L^2_0(\partial T)$, and hence $\cS^\eta_T[\phi]$ solves the homogeneous Lam\'e system in $\eta^{-1}\bT^d\setminus \partial T$. Green's identity then shows that $\cS^\eta_T[\phi] = 0$ in $\eta^{-1}\bT^d$, and it follows that $\ker(-\frac12 I + \cK^{\eta,*}_T) = \{0\}$. On the other hand, in view of the perturbative relations and the compactness of $\cK^{\eta}_{T,1}$ and $\cK^{\eta,*}_{T,1}$, the ranges of $-\frac12 I + \cK^\eta_T$ and $-\frac12 I + \cK^{\eta,*}_T$ are still closed. Then Lemma \ref{lem:semifred} shows that $\ker(-\frac12 I + \cK^\eta_T) = \{0\}$, and that those operators are bijections on $\partial T$.
\end{proof}

\section{Asymptotic analysis for the rescaled cell problem}
\label{sec:cell}

As discussed in the Introduction, to prove homogenization results using the standard oscillating test function arguments, we need solve the rescaled cell-problem \eqref{eq:chieta}, which is imposed on $\frac1\eta \bT^d$. The existence and uniqueness of its solution $\chi^\eta_k$ can be obtained from the standard elliptic theory. Take the inner product with $\chi^\eta_k$ on both sides of \eqref{eq:chieta} and integrate by parts, we get
\begin{equation*}
\int_{\eta^{-1}\bT^d\setminus \ol T} \mu |\nabla \chi^\eta_k|^2 + (\lambda+\mu)(\div \chi^\eta_k)^2 = \eta^d \int_{\bT^d\setminus \ol T} e_k \cdot \chi^\eta_k.
\end{equation*}
Using the Poincar\'e inequality \eqref{eq:poincare}, we get
\begin{equation}
\label{eq:chigradbdd}
\|\nabla \chi^\eta_k\|_{L^2(\eta^{-1}\bT^d\setminus \ol T)} \le \begin{cases}
C, &\qquad d\ge 3,\\
C|\log\eta|^{\frac12}, &\qquad d=2.
\end{cases}
\end{equation}

To make the oscillation structure of the domain coincide with that of $D^\eps$, we define the further rescaled function
\begin{equation}
\label{eq:vepsdef}
v^\eps_k(x) = \begin{cases}
\chi^\eta_k(\frac{x}{\eps\eta}), \qquad &d\ge 3,\\
\frac{1}{|\log \eta|} \chi^\eta_k(\frac{x}{\eps\eta}), \qquad &d=2.
\end{cases}
\end{equation}
By the definition $v^\eps_k$ vanishes in the holes of $\eps\R^d_f$, and a direct computation shows that
\begin{equation}
\label{eq:vepseq}
-\cL^{\lambda,\mu}_x [v^\eps_k](x) =  \frac{1}{\sigma^2_\eps} e_k \qquad \text{in } \eps \R^d_f.
\end{equation}

We have the following result concerning the asymptotic behavior of $v^\eps_k$.

\begin{lemma}
\label{lem:vasym} Suppose the assumptions of Theorem \ref{thm:qualh} hold. Let $v^\eps_k$, $k=1,\dots,d$, be defined by \eqref{eq:vepsdef}. Then the following holds.
\begin{itemize}
\item[(1)] For all regimes of hole-cell ratios, there exists $C > 0$ depending only on $d$ and $T$ such that	 
\begin{equation}
\label{eq:vgradbdd}
\|\nabla v^\eps_k\|_{L^2(D)} \le C \sigma_\eps^{-1}.
\end{equation}
\item[(2)] In the critical setting, i.e. when $\sigma_\eps$ converges to some positive constant $\sigma_0$ as $\eps \to 0$,
\begin{equation}
\label{eq:dvweak}
\nabla v^\eps_k = (\partial_j (v^\eps_k)^j) \rightharpoonup 0 \quad \text{weakly in }\;  L^2(D).
\end{equation}
\item[(3)] For all dilute settings, i.e. when $\eta_\eps \to 0$ as $\eps \to 0$, let $M$ be defined by \eqref{eq:Mdef}. Then, for $d \ge 3$ with $p \in [1,\frac{2d}{d-2}]$, one has
\begin{equation}
\label{eq:vstrong}
 v^\eps_k \to M^{-1}e_k \quad \text{ in } L^{p}_{\rm loc}(\R^d).
\end{equation}
For $d=2$, the above holds for $p \in [1,2]$.
\end{itemize}
\end{lemma}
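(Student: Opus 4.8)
The plan is to treat the three items separately: items~(1) and~(2) are elementary once one rescales the a priori bound, while item~(3) is where the layer-potential machinery of Section~\ref{sec:lp} is used. For item~(1), since $D$ is bounded it is covered by at most $C\eps^{-d}$ of the cells $\eps(z+Y)$, $z\in\Z^d$, and on each such cell $v^\eps_k$ coincides with $c_\eta\,\chi^\eta_k\!\big(\tfrac{\,\cdot\,-\eps z}{\eps\eta}\big)$, where $c_\eta=1$ for $d\ge 3$ and $c_\eta=|\log\eta|^{-1}$ for $d=2$. The substitution $y=(x-\eps z)/(\eps\eta)$ carries a cell onto one period of $\eta^{-1}\bT^d$ with Jacobian $(\eps\eta)^d$, hence
\begin{equation*}
\|\nabla v^\eps_k\|_{L^2(D)}^2\ \le\ C\,\eps^{-d}(\eps\eta)^{d-2}\,c_\eta^2\,\|\nabla\chi^\eta_k\|^2_{L^2(\eta^{-1}\bT^d\setminus\bar T)},
\end{equation*}
and inserting \eqref{eq:chigradbdd} and recalling the definition \eqref{eq:sigeps} of $\sigma_\eps$ gives $\|\nabla v^\eps_k\|_{L^2(D)}\le C\sigma_\eps^{-1}$ for every $d\ge 2$. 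For item~(2), the zero extension $\tilde\chi^\eta_k$ lies in $H^1(\eta^{-1}\bT^d)$ --- there is no jump across $\partial T$ because the trace vanishes --- so by periodicity $\int_{\eta^{-1}\bT^d}\nabla\tilde\chi^\eta_k=0$; equivalently the mean of $\nabla v^\eps_k$ over every cell $\eps(z+Y)\subseteq D$ vanishes. In the critical regime $\sigma_\eps$ is bounded, so item~(1) gives a uniform $L^2(D)$ bound on $\nabla v^\eps_k$ and it suffices to test against $\varphi\in C^\infty_c(D)$; for $\eps$ small only full cells meet the support of $\varphi$, so writing $\int_{\text{cell}}\nabla v^\eps_k\cdot\varphi=\int_{\text{cell}}\nabla v^\eps_k\cdot(\varphi-\langle\varphi\rangle_{\text{cell}})$, estimating $\|\varphi-\langle\varphi\rangle_{\text{cell}}\|_\infty\le C\eps$, and summing by Cauchy--Schwarz bounds the total by $C\eps\,\|\nabla v^\eps_k\|_{L^2(D)}\le C\eps\sigma_\eps^{-1}\to 0$; density of $C^\infty_c(D)$ then yields \eqref{eq:dvweak}.

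For item~(3) I would solve \eqref{eq:chieta} by periodic layer potentials. Since $G^\eta_k$, restricted to $\eta^{-1}\bT^d\setminus\bar T$, already satisfies $-\cL^{\lambda,\mu}[G^\eta_k]=\eta^d e_k$, I set $\chi^\eta_k=G^\eta_k+\cD^\eta_T[\psi^\eta_k]$ and fix the density from the Dirichlet condition on $\partial T$, i.e.\ from $(-\tfrac12 I+\cK^\eta_T)[\psi^\eta_k]=-G^\eta_k|_{\partial T}$, which is uniquely solvable by Theorem~\ref{thm:perK}(2). The crux is the behaviour of $\psi^\eta_k$ as $\eta\to0$. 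Decomposing $\psi^\eta_k=c^\eta+\rho^\eta$ along the splitting $\mathbb{V}_0\oplus\ran(-\tfrac12 I+\cK_T)$ from Lemma~\ref{lem:key}, and using that $-\tfrac12 I+\cK^\eta_T$ multiplies constant fields by $-\eta^d|T|$ (Theorem~\ref{thm:perK}(1)) together with the perturbative structure $\cK^\eta_T=\cK_T+\eta^{d-1}\cK^\eta_{T,1}$, one finds that $\rho^\eta$ stays bounded and converges, while the constant part diverges: $c^\eta=(\eta^d|T|)^{-1}\big(\Pi_0 g^0+o(1)\big)$, with $g^0:=\lim_{\eta\to0}G^\eta_k|_{\partial T}$ and $\Pi_0$ the projection of Lemma~\ref{lem:L2decom}.

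Now a Green's-identity computation (as in the proof of Theorem~\ref{thm:perK}(1), but evaluated in the exterior) shows that $\cD^\eta_T$ maps a constant field $a$ to the constant $-\eta^d|T|\,a$ on all of $\eta^{-1}\bT^d\setminus\bar T$; hence the divergence of $c^\eta$ is exactly cancelled, $\cD^\eta_T[c^\eta]=-\Pi_0 g^0+o(1)$ uniformly. Together with the decay $\cD_T[\rho^\eta](x)=O(|x|^{1-d})$ of the free-space double layer and, for $d\ge 3$, the decay of $G^\eta_k$ itself, this yields $\chi^\eta_k(x)\to-\Pi_0 g^0$ uniformly on regions far from the hole. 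To identify the constant, use that $\cS_T[\phi^*_\ell]\equiv-a^*_\ell$ on $\bar T\ni0$ and the evenness of $\Gamma_k$: then $\langle\phi^*_\ell,\Gamma_k|_{\partial T}\rangle=(\cS_T[\phi^*_\ell])^k(0)=-(a^*_\ell)^k$, so $\Pi_0 g^0=-A_T e_k$ and $-\Pi_0 g^0=A_T e_k=M^{-1}e_k$ for $d\ge3$. For $d=2$ the same scheme applies, but $G^\eta_k$ no longer decays; instead its logarithmic growth, divided by the $|\log\eta|$ already built into $v^\eps_k$, is what produces the constant limit $M^{-1}e_k$, the bounded double-layer pieces being washed out by the $|\log\eta|^{-1}$ factor. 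Finally, $v^\eps_k(x)=c_\eta\chi^\eta_k(x/(\eps\eta))$ samples $\chi^\eta_k$ away from the holes, so the pointwise statement transfers to convergence of $v^\eps_k$ on compact sets.

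The main obstacle, I expect, is twofold. First, the blow-up analysis of item~(3): one must track the $\eta^{-d}$ divergence of $\psi^\eta_k$ precisely and confirm it is annihilated by the matching degeneracy of $\cD^\eta_T$ on constants, which requires uniform control --- in $x$ over all of $\eta^{-1}\bT^d\setminus\bar T$, not merely on $\partial T$ --- of the remainders $\eta^{d-1}\cD^\eta_{T,1}[\rho^\eta]$ and $\cD_T[\rho^\eta]$, and in $d=2$ a careful bookkeeping of which logarithmic terms survive the $|\log\eta|$-normalization. Second, upgrading pointwise convergence away from the hole to convergence in $L^p_{\mathrm{loc}}$: on each of the $O(\eps^{-d})$ cells the transition layer where $\chi^\eta_k$ is not yet close to $M^{-1}e_k$ occupies, after rescaling, a set of volume $o(\eps^d)$, so its contribution is negligible once $v^\eps_k$ enjoys a uniform $L^p_{\mathrm{loc}}$ bound there (for $d\ge3$ even $L^{2d/(d-2)}$, obtainable from the representation formula together with a Sobolev inequality on the perforated domain, and for $d=2$ an analogous bound using the $|\log\eta|^{-1}$ scaling). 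Assembling these pieces gives \eqref{eq:vstrong} in the stated ranges of $p$.
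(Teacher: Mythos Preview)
Your treatment of items~(1) and~(2) is essentially the paper's: the gradient bound is declared ``a rescaling of \eqref{eq:chigradbdd}'' there, and for the weak convergence the paper also freezes $\varphi$ at a cell value and uses mean zero of $\nabla v^\eps_k$ over a period. Your version is in fact slightly cleaner, since by taking $\varphi\in C^\infty_c(D)$ you avoid the boundary-cube estimate that the paper writes out separately.

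For item~(3) your layer-potential analysis is correct and close to the paper's, but organized differently. You decompose the \emph{density} $\psi^\eta_k$ along $\mathbb V_0\oplus\ran(-\tfrac12 I+\cK_T)$ and exploit that $\cD^\eta_T$ maps constants to $-\eta^d|T|$ times themselves, so the $\eta^{-d}$ blow-up of the constant part self-cancels. The paper instead decomposes the \emph{boundary data} $-G^\eta_k|_{\partial T}=c^\eta_k+h^\eta_k$ along the same splitting, solves for the constant piece by the trivial Lam\'e solution, and then decomposes the remaining density $g$ as mean plus $L^2_0$-part (using that $-\tfrac12 I+\cK_T:L^2_0\to\ran(-\tfrac12 I+\cK_T)$ is the natural bijection). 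Both routes arrive at the same representation $\chi^\eta_k=\Gamma_k+A_Te_k+\cD^\eta_T[g']+O(\eta^{d-2})$ with $g'$ bounded, and your identification $-\Pi_0 g^0=A_Te_k$ via $\cS_T[\phi^*_\ell](0)=-a^*_\ell$ matches the paper's computation of $c^\eta_k$.

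The substantive difference is the passage from the representation to $L^p_{\rm loc}$-convergence. You propose pointwise convergence on the bulk plus a transition-layer argument near the holes, backed by a uniform $L^p$ bound. This can be made to work, but the paper takes a shorter and quantitative path that avoids the transition layer entirely: on each $\eps$-cell it applies the scale-invariant Sobolev--Poincar\'e inequality $\|v^\eps_k-\langle v^\eps_k\rangle\|_{L^{2d/(d-2)}}\le C\|\nabla v^\eps_k\|_{L^2}$, controls the gradient term by \eqref{eq:chigradbdd}, and computes the discrepancy $\langle v^\eps_k\rangle_{Q_{\eps,i}}-M^{-1}e_k=\langle\chi^\eta_k\rangle_{\eta^{-1}\bT^d}-A_Te_k$ by \emph{integrating} the representation formula directly (the averages of $\Gamma_k$ and of $\cD^\eta_T[g']$ over $\eta^{-1}\bT^d\setminus\ol T$ are shown to be $O(\eta^{d-2})$ and $O(\eta^d)$ respectively). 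Summing over $O(\eps^{-d})$ cells yields the explicit rate $\|v^\eps_k-M^{-1}e_k\|_{L^{2d/(d-2)}(D)}\le C\eta^{(d-2)/2}$, which is what the paper actually records as \eqref{eq:vstrbdd}. In $d=2$ the same cell-wise argument with $p=2$ and ordinary Poincar\'e gives the $|\log\eta|^{-1/2}$ rate; the leading constant comes, as you say, from the logarithmic growth of $\Gamma_k$ divided by $|\log\eta|$, while the bounded pieces $A_Te_k$ and $\cD^\eta_T[g']$ are killed by that normalization. The paper's approach thus buys the quantitative estimate for free, whereas your pointwise scheme would need an extra step to upgrade to $L^p$ with a rate.
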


\begin{proof}
The gradient bound in \eqref{eq:vgradbdd} is essentially a rescaling of \eqref{eq:chigradbdd} and the proof is omitted. The proof of \eqref{eq:vstrong} is postponed to the next lemma where the results are stronger. We only establish the weak convergence \eqref{eq:dvweak} here.

We first note that in this critical hole-cell ratio setting, $\|\nabla v^\eps_k\|_{L^2}$ is uniformly bounded and, hence, it suffices to check that for all $\varphi \in C^\infty_c(D,\R)$, for all $j,\ell = 1,\dots,d$,
\begin{equation}
\label{eq:vklem-1}
\int_D (\partial_\ell v^j) \varphi \to 0, \qquad \text{as } \eps \to 0.
\end{equation}
Here and in the rest of the proof, we write $(v^\eps_k)^j$ simply as $v^j$.

Consider the $\eps$-cubes in the definition of $\eps\R^d_f$, i.e. cubes of the form $\eps (z + (-\frac12,\frac12)^d)$, $z \in \Z^d$, and label those that have non-empty intersection with $D$ by $i \in \N$. Among those cubes, let $\mathcal{I}_\eps$ denote those contained in $D$, and let $\mathcal{J}_\eps$ denote those that intersect with $\partial D$.

For a typical interior cube denoted by $Q_{\eps,i} = z_{\eps,i}+\eps (-\frac12,\frac12)^d$, where $z_{\eps,i} \in \eps \Z^d$, we compute
\begin{equation*}
\int_{Q_{\eps,i}} (\partial_\ell v^j)\varphi = \int_{Q_{\eps}} (\partial_\ell v^j) \varphi(z_{\eps,i}+y) dy = (\eps\eta)^{d-1}\int_{Q_{\frac1\eta}} (\partial_\ell \chi^\eta_k)^j(y) \varphi(z_{\eps,i} + \eps\eta y) dy.
\end{equation*}
We use Taylor expansion for $\varphi$, and check that
\begin{equation*}
\left|\varphi(z_{\eps,i} + \eps\eta y) - \varphi(z_{\eps,i})\right| \le \|\nabla \varphi\|_{L^\infty} \eps.
\end{equation*}
Since replacing $\varphi$ by $\varphi(z_{\eps,i})$ makes the integral vanish because $\partial_\ell v^j$ is periodic, we deduceb
\begin{equation*}
\left|\int_{Q_{\eps,i}} (\partial_\ell v^j)\varphi\right| \le \|\nabla \varphi\|_{L^\infty} \eps^d \eta^{d-1}\|\nabla \chi^\eta_k\|_{L^2}|Q_{\frac1\eta}|^{\frac12} \le C\eps^d \eta^{\frac{d-2}{2}}. 
\end{equation*}
The above holds for $d\ge 3$. If $d=2$, there is a further multiplicative factor $|\log\eta|^{-\frac12}$ on the right hand side, in view of the definition \eqref{eq:vepsdef} and the bound \eqref{eq:chigradbdd}. The above estimate is uniform for $i\in \mathcal{I}_\eps$. Since the number of interior cubes is of order $O(\eps^{-d})$, the overall contribution to the left hand side of \eqref{eq:vklem-1} from interior cubes vanishes in the limit.

For a typical boundary cube denoted by $Q_{\eps,i}$, $i \in \mathcal{J}_\eps$, we use H\"older inequality to get, for $d\ge 3$,
\begin{equation*}
\begin{aligned}
\left|\int_{Q_{\eps,i}} (\partial_\ell v^j)\varphi\right| &\le \|\varphi\|_{L^\infty}\|\partial_\ell v^j\|_{L^2(Q_{\eps,i})}\eps^{\frac d2}\\
&= \|\varphi\|_{L^\infty}\|\partial_\ell (\chi^\eta_k)^j\|_{L^2(Q_{\frac1\eta})}\eps^{\frac d2}(\eps\eta)^{\frac{d-2}{2}} \le C\eps^{d-1} \eta^{\frac{d-2}{2}}.
\end{aligned}
\end{equation*}
Again, for $d=2$, the right hand side is multiplied by $|\log\eta|^{-\frac12}$. Because $\mathcal{J}_\eps$ has a cardinality of order $\eps^{-d+1}$, the above estimate shows that the contribution of boundary cubes to the integral in \eqref{eq:dvweak} also vanishes in the limit. This proves \eqref{eq:dvweak}.
\end{proof}

\begin{lemma}
Under the same conditions of the previous lemma, there exists $C > 0$ depending only on $T$, $d$ and $D$, such that, for $\eps$ sufficiently small,
\begin{equation}
\label{eq:vstrbdd}
\|v^\eps_k -  M^{-1}e_k\|_{L^p(D)} \le \begin{cases}
C\eta^{\frac{d-2}{2}}, \qquad& d \ge 3 \,\text{and }\, p = \frac{2d}{d-2},\\
C|\log \eta|^{-\frac12}, \qquad& d = 2 \,\text{and }\, p = 2.
\end{cases}
\end{equation}
\end{lemma}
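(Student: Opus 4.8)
The plan is to combine a periodic layer-potential representation of $\chi^\eta_k$ with a change of variables adapted to the periodic geometry of $D^\eps$, reducing \eqref{eq:vstrbdd} to a uniform-in-$\eta$ bound for $\chi^\eta_k - M^{-1}e_k$ on the rescaled cell $\eta^{-1}\bT^d$. Following the layer-potential approach I would first write, on $\eta^{-1}\bT^d\setminus\ol T$,
\[ \chi^\eta_k = c^\eta_k + \cS^\eta_T[\phi^\eta_k], \]
where $\phi^\eta_k\in L^2(\partial T)$ is chosen so that $\cS^\eta_T[\phi^\eta_k]$ is constant on $\partial T$ and $\int_{\partial T}\phi^\eta_k = e_k$ (by the structure of $G^\eta_k$ the latter makes $-\cL^{\lambda,\mu}[\cS^\eta_T[\phi^\eta_k]]$ match the source $\eta^d e_k$), and $c^\eta_k\in\R^d$ is the constant forced by the Dirichlet condition $\chi^\eta_k\rvert_{\partial T} = 0$; solvability and uniqueness follow from the invertibility results of Section~\ref{sec:lp}, in particular Theorem~\ref{thm:perK}. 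The two facts I would then import from the asymptotic analysis of the cell problem are the quantitative convergences $\phi^\eta_k\to\phi^*_k$ in $L^2(\partial T)$ and $c^\eta_k\to M^{-1}e_k$: explicitly, for $d\ge3$, $\|\phi^\eta_k\|_{L^2(\partial T)}\le C$ and $|c^\eta_k - M^{-1}e_k|\le C\eta^{d-2}$ — recall $M^{-1}e_k = a^*_k = -\cS_T[\phi^*_k]\rvert_{\ol T}$, cf.\ \eqref{eq:ATdef}, \eqref{eq:phistar}, \eqref{eq:Mdef} — while for $d=2$, after the normalization in \eqref{eq:vepsdef}, the relevant statement is $|c^\eta_k - |\log\eta|M^{-1}e_k|\le C$.

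For the reduction, tile $D$ by the $\eps$-cubes underlying $\eps\R^d_f$. On each such cube $x\mapsto\chi^\eta_k(x/(\eps\eta))$, extended by zero in the hole, is a translate of $\chi^\eta_k$ viewed on $\eta^{-1}\bT^d$; since the number of $\eps$-cubes meeting $D$ is $O(\eps^{-d})$ and one may integrate over all of each cube, the change of variables $y = x/(\eps\eta)$ gives
\[ \|v^\eps_k - M^{-1}e_k\|_{L^p(D)}^p \le C|D|\,\eta^{d}\,\|\chi^\eta_k - M^{-1}e_k\|_{L^p(\eta^{-1}\bT^d)}^p, \qquad d\ge3, \]
with an extra factor $|\log\eta|^{-p}$ in the $d=2$ case. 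As $\eta^{d/p} = \eta^{(d-2)/2}$ when $p = \tfrac{2d}{d-2}$, it suffices to prove $\|\chi^\eta_k - M^{-1}e_k\|_{L^p(\eta^{-1}\bT^d)}\le C$ uniformly in $\eta$ (respectively $\|\chi^\eta_k - |\log\eta|M^{-1}e_k\|_{L^2(\eta^{-1}\bT^2)}\le C\eta^{-1}|\log\eta|^{1/2}$ for $d=2$).

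For $d\ge3$ I would split, using $\cS^\eta_T = \cS_T + \eta^{d-2}\cS^\eta_{T,1}$,
\[ \chi^\eta_k - M^{-1}e_k = (c^\eta_k - a^*_k) + \eta^{d-2}\cS^\eta_{T,1}[\phi^\eta_k] + \cS_T[\phi^\eta_k - \phi^*_k] + \cS_T[\phi^*_k], \]
and bound each term in $L^p(\eta^{-1}\bT^d)$, $p = \tfrac{2d}{d-2}$. The constant $c^\eta_k - a^*_k$ contributes $|c^\eta_k - a^*_k|\,\eta^{-(d-2)/2}\le C\eta^{(d-2)/2}$; the term $\eta^{d-2}\cS^\eta_{T,1}[\phi^\eta_k]$ is bounded pointwise on the cell by $C\eta^{d-2}$ (the kernel $R_k(\eta(x-y))$ is bounded and $\|\phi^\eta_k\|_{L^2(\partial T)}\le C$), so its $L^p$-norm is again $\le C\eta^{(d-2)/2}$. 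The key point is that $\cS_T\colon L^2(\partial T)\to L^{\frac{2d}{d-2}}(\R^d)$ is bounded: on bounded sets this follows from the standard mapping property of $\cS_T$ into $H^1$ on neighborhoods of $\partial T$ (cf.\ the trace formula \eqref{eq:traceu}) together with the Sobolev embedding $H^1\hookrightarrow L^{\frac{2d}{d-2}}$, while for $|x|$ large one has $|\cS_T[\psi](x)|\le C\|\psi\|_{L^2(\partial T)}|x|^{-(d-2)}$ by \eqref{eq:cSTdecay} and (A3), and this lies in $L^{\frac{2d}{d-2}}(\{|x|>1\})$ precisely because $\tfrac{2d}{d-2}(d-2) = 2d > d$. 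Hence $\|\cS_T[\phi^*_k]\|_{L^p(\eta^{-1}\bT^d)}\le\|\cS_T[\phi^*_k]\|_{L^p(\R^d)}\le C$ (independent of $\eta$) and $\|\cS_T[\phi^\eta_k - \phi^*_k]\|_{L^p(\eta^{-1}\bT^d)}\le C\|\phi^\eta_k - \phi^*_k\|_{L^2(\partial T)}\le C$. Summing the four contributions gives $\|\chi^\eta_k - M^{-1}e_k\|_{L^p(\eta^{-1}\bT^d)}\le C$, hence $\|v^\eps_k - M^{-1}e_k\|_{L^p(D)}\le C\eta^{(d-2)/2}$; together with Hölder's inequality on the bounded domain this also establishes the postponed convergence \eqref{eq:vstrong}.

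For $d=2$ the free-space single layer $\cS_T[\phi^*_k]$ grows logarithmically and is not in $L^2(\R^2)$, so the preceding bookkeeping fails on the whole cell. Instead I would use the energy estimate \eqref{eq:chigradbdd} and the Poincaré inequality on the torus $\eta^{-1}\bT^2$ at scale $\eta^{-1}$, applied to the zero extension $\tilde\chi^\eta_k\in H^1(\eta^{-1}\bT^2)$, to get $\|\chi^\eta_k - \langle\chi^\eta_k\rangle\|_{L^2(\eta^{-1}\bT^2)}\le C\eta^{-1}\|\nabla\chi^\eta_k\|_{L^2}\le C\eta^{-1}|\log\eta|^{1/2}$; combining this with the quantitative identification of the cell average, $|\langle\chi^\eta_k\rangle - |\log\eta|M^{-1}e_k|\le C|\log\eta|^{1/2}$ — which follows from the representation above and is exactly where the value $M = \tfrac{c_1}{2\pi}I$ enters — and noting that the hole, where $\chi^\eta_k$ vanishes, contributes only $O(|\log\eta|)\le C\eta^{-1}|\log\eta|^{1/2}$ for $\eta$ small, one obtains $\|\chi^\eta_k - |\log\eta|M^{-1}e_k\|_{L^2(\eta^{-1}\bT^2)}\le C\eta^{-1}|\log\eta|^{1/2}$ and hence the claim. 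The main obstacle is exactly this sharp integrability bookkeeping: for $d\ge3$ one needs $\cS_T$ to land in the \emph{critical} Sobolev exponent $L^{2d/(d-2)}(\R^d)$, in which the decay $|x|^{-(d-2)}$ is only just integrable at infinity, while the failure of this for $d=2$ forces the detour through Poincaré's inequality together with a separate asymptotic analysis of $\langle\chi^\eta_k\rangle$, where the value of $M$ in dimension two is pinned down.
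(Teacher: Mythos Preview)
Your approach differs from the paper's in two ways. First, you represent $\chi^\eta_k$ via a \emph{single-layer} potential, $\chi^\eta_k = c^\eta_k + \cS^\eta_T[\phi^\eta_k]$, whereas the paper writes $\chi^\eta_k = G^\eta_k + \Phi^\eta_k$ and solves the Dirichlet problem for $\Phi^\eta_k$ by a \emph{double-layer} potential, arriving at $\chi^\eta_k = \Gamma_k + A_Te_k + \cD^\eta_T[g'] + O(\eta^{d-2})$ with $\|g'\|_{L^2(\partial T)}\le C$ (see \eqref{eq:phi-3}). Second, for $d\ge3$ you bound $\|\chi^\eta_k - M^{-1}e_k\|_{L^{2d/(d-2)}(\eta^{-1}\bT^d)}$ directly through the global mapping $\cS_T:L^2(\partial T)\to L^{2d/(d-2)}(\R^d)$, whereas the paper works cube by cube, splitting $v^\eps_k - M^{-1}e_k$ into the oscillation $v^\eps_k - \langle v^\eps_k\rangle$ (controlled by Sobolev--Poincar\'e and the energy bound \eqref{eq:chigradbdd}) and the mean deviation $\langle v^\eps_k\rangle - M^{-1}e_k$, which it computes to be $O(\eta^{d-2})$ by averaging $\Gamma_k$ and $\cD^\eta_T[g']$ over the cell. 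Your route is more streamlined once the representation is in hand; the paper's route avoids any global integrability statement for layer potentials and reuses only the $L^2$ gradient bound. For $d=2$ the two arguments are closer: both combine Poincar\'e on the torus with a separate identification of the cell average.

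There is, however, a genuine gap in your first step. The existence of $\phi^\eta_k$ with $\cS^\eta_T[\phi^\eta_k]$ constant on $\partial T$ and $\int_{\partial T}\phi^\eta_k = e_k$, together with the rate $|c^\eta_k - a^*_k|\le C\eta^{d-2}$, does \emph{not} follow from Theorem~\ref{thm:perK}: that theorem asserts bijectivity of $-\tfrac12 I + \cK^\eta_T$ and $-\tfrac12 I + \cK^{\eta,*}_T$, which are the trace operators for the double-layer and for the conormal derivative of the single-layer, not for the trace of $\cS^\eta_T$ itself. What you actually need is invertibility of $\cS^\eta_T:L^2(\partial T)\to H^1(\partial T)$ (so that the constraint ``$\cS^\eta_T[\phi]$ is a given constant'' determines $\phi$), followed by a perturbation argument yielding $\phi^\eta_k\to\phi^*_k$ and $c^\eta_k\to a^*_k$ at rate $\eta^{d-2}$. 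This is plausible for $d\ge3$ --- $\cS_T$ is known to be invertible and $\cS^\eta_T=\cS_T+\eta^{d-2}\cS^\eta_{T,1}$ is a compact perturbation --- but it is nowhere established in the paper and requires work of the same nature as the paper's analysis in \eqref{eq:phi-1}--\eqref{eq:phi-3}. Your $d=2$ argument likewise leans on the unproven single-layer asymptotics to pin down $\langle\chi^\eta_k\rangle$.
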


\begin{proof} Because $\eps$ is taken small, we can assume that \eqref{eq:AT2d} holds as remarked in \ref{rem:AT2d}. Our proof is based on an explicit representation of $\chi^\eta_k$, which is made possible by the layer potentials developed earlier. 

Compare the equations \eqref{eq:chieta} and \eqref{eq:pLameeta}, in the domain $\eta^{-1}\bT^d \setminus \ol T$, we must have 
\begin{equation*}
\chi^\eta_k(x) = G^\eta_k(x) + \Phi^\eta_k(x), \qquad x\in \eta^{-1} \bT^d\setminus \ol T,
\end{equation*}
where $\Phi^\eta_k$ is the unique solution to
\begin{equation}
\label{eq:Phietaeq}
\cL^{\lambda,\mu} [\Phi^\eta_k] = 0 \, \quad \text{ in }\, \eta^{-1}\bT^d\setminus \ol T, \qquad \Phi^\eta_k = - G^\eta_k \, \text{  on } \, \partial T.
\end{equation}
This is a Dirichlet boundary problem for the Lam\'e system on the torus $\eta^{-1}\bT^d$ and exterior to $T$. We can solve it using the double-layer potential $\cD^\eta_T$. However, to obtain necessary estimates, we first perform a decomposition of the boundary data according to Lemma \ref{lem:L2decom}. We have
\begin{equation}
\label{eq:phi-2}
-G^\eta_k = c^\eta_k + h^\eta_k,
\end{equation}
with $h^\eta_k \in \ran(-\frac12 I + \cK_T)$ and $c^\eta_k \in \R^d$. In view of the decomposition formula and the perturbation relation \eqref{eq:Getapert}, we have
\begin{equation*}
(c^\eta_k)^j = -\int_{\partial T} G^\eta_k(y)\cdot \phi^*_j(y) dy = -(\cS_T[\phi^*_j])^k(0) - \eta^{d-2}\cS^\eta_{T,1}[\phi^*_j](0) = -(a^*_k)^j  - \eta^{d-2}\cS^\eta_{T,1}[\phi^*_j](0).
\end{equation*}
In particular, the last term is a constant of order $O(\eta^{d-2})$. On the other hand, since $-\frac12 I + \cK^\eta_T$ is invertible on $L^2(\partial T)$, we can find a unique $g \in L^2(\partial T)$ such that
\begin{equation}
\label{eq:phi-1}
h^\eta_k = (-\frac12 I + \cK^\eta_T)[g] = -\eta^d|T|\langle g\rangle + (-\frac12 I + \cK_T)[g'] + \eta^{d-1}\cK^\eta_{T,1}[g'],
\end{equation}
where $\langle g\rangle := \fint_{\partial T} g $ is the mean-value of $g$ on $\partial T$, and $g' \in L^2_0(\partial T)$ is the fluctuation, and $g = g' + \langle g\rangle$.

Let $\Pi_1$ in Lemma \ref{lem:L2decom} operate on both sides of \eqref{eq:phi-1}, we get
\begin{equation*}
(-\frac12 I + \cK_T + \eta^{d-1}\Pi_1 \cK^\eta_{T,1})[g'] =  h^\eta_k.
\end{equation*}
The operator $\Pi_1\cK^\eta_{T,1}$ is compact on $L^2(\partial T)$ and the left hand side is hence a perturbation to $-\frac12 I + \cK_T$, which is invertible from $L^2_0(\partial T)$ to $\ran(-\frac12 I + \cK_T)$. We conclude that, for $\eta$ sufficiently small, the perturbed operator remains invertible and
\begin{equation*}
g' = (-\frac12 I + \cK_T + \eta^{d-1}\Pi_1 \cK^\eta_{T,1})^{-1}[h^\eta_k].
\end{equation*}
Both the inversion operator and $h^\eta_k$ can be uniformly bounded in $\eta$; we conclude that $\|g'\|_{L^2} \le C$. Finally, let the projection $\Pi_0$ operate on both sides of \eqref{eq:phi-1}, we get
\begin{equation*}
-\eta^d|T|\langle g\rangle + \eta^{d-1} \Pi_0\cK^\eta_{T,1}[g'] = 0. 
\end{equation*}
From this we deduce that $\langle g\rangle = O(\eta^{-1})$.

The the solution to the rescaled cell problem \eqref{eq:chieta} is hence represented by
\begin{equation}
\label{eq:phi-3}
\chi^\eta_k = \Gamma_k + A_T e_k +  \cD^\eta_T[g'] + O(\eta^{d-2}).
\end{equation}
The error term has an $L^\infty$ norm of order $\eta^{d-2}$, and it includes the constant error in \eqref{eq:phi-2}, the perturbation in \eqref{eq:Getapert} and the constant term in \eqref{eq:phi-1}. 

Back to the proof of \eqref{eq:vstrbdd}. 
We decompose the integral over $D$ into integrations over $\eps$-cubes as before, and consider first the case of $d\ge 3$. Let $p = \frac{2d}{d-2}$. We compute
\begin{equation*}
\|v^\eps_k - M^{-1}e_k\|_{L^p(D)}^p \le \sum_{i \in \mathcal{I}_\eps}  \int_{Q_{\eps,i}} |v^\eps_k(z) - M^{-1}e_k|^p dz.
\end{equation*}
Here, $\mathcal{I}_\eps$ is the index set for $\eps$-cubes that has non-empty intersection with $D$. In each $\eps$-cube, we estimate the integral by
\begin{equation}
\label{eq:vav-1}
\begin{aligned}
\int_{Q_{\eps,i}} |v^\eps_k (z) - M^{-1}e_k|^p dz \le& C \int_{Q_{\eps,i}}|v^\eps - \langle v^\eps\rangle_{Q_{\eps,i}}|^p + |\langle v^\eps_k\rangle_{Q_{\eps,i}} - M^{-1}e_k|^p\\
\le& C\left( \|\nabla v^\eps_k\|_{L^2(Q_{\eps,i})}^p + \eps^d |\langle v^\eps_k\rangle_{Q_{\eps,i}} - M^{-1}e_k|^p\right).
\end{aligned}
\end{equation}
We used the Sobolev embedding $L^{2^*}(r\bT^d)\subseteq H^1(r\bT^d)$, for any $r >0$, where $r\bT^d$ is the rescaled torus; moreover, the bounding constant in the embedding inequality is scaling invariant and hence independent of $r$. The constant $C$ above hence depends only on $p$ and $d$. We have
\begin{equation}\label{eq:vav-2}
\|\nabla v^\eps_k\|_{L^2(Q_{\eps,i})}^2 = (\eps\eta)^{d-2}\|\nabla \chi^\eta_k\|_{L^2(\eta^{-1}\bT^d)}^2 \le C(\eps\eta)^{d-2}.
\end{equation}
To control the contribution of $\langle v^\eps_k\rangle_{Q_{\eps,i}} - M^{-1}e_k$, we compute and find that
\begin{equation*}
\langle v^\eps_k\rangle_{Q_{\eps,i}} - M^{-1}e_k = \langle \chi^\eta\rangle_{\eta^{-1}\bT^d} - M^{-1}e_k = \langle \chi^\eta_k\rangle_{\frac{1}{\eta}\bT^d\setminus \ol T} - M^{-1}e_k + O(\eta^d)
\end{equation*}
From \eqref{eq:phi-3}, we have
\begin{equation*}
\langle \chi^\eta_k\rangle_{\frac{1}{\eta}\bT^d\setminus \ol T} - M^{-1}e_k = \langle \Gamma_k\rangle_{\frac1\eta \bT^d\setminus \ol T} + \langle \cD^\eta_T[g']\rangle_{\frac1\eta\bT^d \setminus \ol T} + O(\eta^{d-2}).
\end{equation*}
We need to estimate the first two terms on the right hand side. For the average of $\Gamma_k$, we note that
\begin{equation*}
|\Gamma_k(x)| \le \frac{C}{|x|^{d-2}}.
\end{equation*}
As a result, 
\begin{equation*}
\left|\int_{\frac{1}{\eta}\bT^d\setminus \ol T} \Gamma_k(x) dx\right| \le \int_{\frac{1}{\eta}\bT^d} \frac{C}{|x|^{d-2}} \le C\eta^{-2}, \quad \text{and} \quad
\left|\langle \Gamma_k\rangle_{\frac1\eta \bT^d\setminus \ol T}\right| \le C \eta^{d-2}.
\end{equation*}

For the second term, we compute
\begin{equation*}
\begin{aligned}
\int_{\frac1\eta \bT^d \setminus \ol T} \cD^\eta_T[g'](x) dx &= \int_{\frac1\eta \bT^d \setminus \ol T} \int_{\partial T} \left[(\lambda+\mu)(\div_y \Gamma_k(x;y))N_y + \mu N_y\cdot \nabla_y \Gamma_k(x;y) \right] \cdot g'(y) dy dx\\
&= -\int_{\frac1\eta \bT^d \setminus \ol T} \int_{\partial T} \left[(\lambda+\mu)(\div_x \Gamma_k(x;y))N_y + \mu N_y\cdot \nabla_x \Gamma_k(x;y) \right] \cdot g'(y) dy dx\\
&= \int_{\partial T} \int_{\partial T} \left[(\lambda+\mu)(N_x\cdot \Gamma_k(x;y) N_y + \mu N_y\cdot N_x \Gamma_k(x;y) \right] \cdot g'(y) dx dy.
\end{aligned}
\end{equation*}
Using the fact
\begin{equation*}
\sup_{y \in \partial T} \int_{\partial T} |\Gamma_k(x;y)| dx \le C,
\end{equation*} 
we deduce that
\begin{equation*}
\langle \cD^\eta_T[g']\rangle_{\frac1\eta \bT^d\setminus \ol T} \le C\eta^d.
\end{equation*}
It follows that
\begin{equation}
\label{eq:vav-3}
\left|\langle v^\eps_k\rangle_{Q_{\eps,i}} - M^{-1}e_k\right| \le C\eta^{d-2}.
\end{equation}

Use all the estimates above in \eqref{eq:vav-1}, we conclude that
\begin{equation*}
\|v^\eps_k - M^{-1}e_k\|_{L^p(Q_{\eps,i})}^p \le C\eps^d\eta^d.
\end{equation*}
This estimate is uniform for all the cubes $Q_{\eps,i}$'s, and there are $O(\eps^{-d})$ many of them. We hence conclude that
\begin{equation*}
\|v^\eps_k - M^{-1}e_k\|_{L^p(Q_{\eps,i})} \le C\eta^{\frac{d}{p}} = C\eta^{\frac{d-2}{2}}.
\end{equation*}
This completes the proof for $d\ge 3$.

In the two dimensional case, we repeat the argument above but for $p = 2$. In this case, we have
\begin{equation*}
\begin{aligned}
v^\eps_k(x) =& \frac{1}{|\log \eta|} \chi^\eta(\frac{x}{\eps \eta})
= \frac1{|\log \eta|} \left[\Gamma_k(\frac{x}{\eps \eta}) + A_T e_k + \cD^\eta_T[g'] + O(1)\right]\\
=& \frac{1}{|\log \eta|} \left[\frac{c_1}{2\pi}\left(\log\left|\frac{x}{\eps}\right|\right) e_k + \frac{c_1}{2\pi} \log \frac1\eta e_k + \cD^\eta_T[g'] + O(1) \right].
\end{aligned}
\end{equation*}
In particular, we note that
\begin{equation*}
v^\eps_k(x) - \frac{c_1}{2\pi} e_k = \frac{1}{|\log\eta|} \frac{c_1}{2\pi} (\log\left|\frac{x}{\eps}\right|) e_k + \frac{1}{|\log\eta|} \cD^\eta_T[g'](\frac{x}{\eps\eta}) + O\left(\frac{1}{|\log \eta|}\right).
\end{equation*}
To compute $\|v^\eps_k - \frac{c_1}{2\pi}e_k\|_{L^2(D)}^2$, we break the integrals into those on the cubes $Q_{\eps,i}$'s. Using the Poincar\'e inequality on $Q_{\eps,i}$, we get the following analog of \eqref{eq:vav-1}
\begin{equation*}
\int_{Q_{\eps,i}} |v^\eps_k - \frac{c_1}{2\pi}e_k|^2 \le C|\log\eta|^{-2} \eps^2\|\nabla \chi^\eta_k\|_{L^2(\frac1\eta\bT^d)}^2 + \eps^2\left|\langle v^\eps_k\rangle - \frac{c_1}{2\pi}e_k\right|^2.
\end{equation*}
The last term satisfies
\begin{equation*}
\left|\langle v^\eps_k\rangle - \frac{c_1}{2\pi}e_k\right| \le |\log\eta|^{-1}\left(\left|\langle \log|\frac{x}{\eps}|\rangle_{Q_{\eps,i}}\right| + |\langle \cD^\eta_T[g']\rangle_{\frac1\eta \bT^d\setminus \ol T} |\right) + C|\log\eta|^{-1}.
\end{equation*}
The term involving $\cD^\eta_T[g']$ is controlled exactly as before and its average is of order one. We compute
\begin{equation*}
\left|\langle \log|\frac{x}{\eps}|\rangle_{Q_{\eps,i}}\right| = \left|\langle \log|x|\rangle_{Q_1} \right| \le C.
\end{equation*}
We hence conclude that
\begin{equation*}
\left|\langle v^\eps_k\rangle - \frac{c_1}{2\pi}e_k\right| \le C|\log\eta|^{-1}.
\end{equation*}
Using those estimates together with \eqref{eq:chigradbdd} in \eqref{eq:vav-3}, we conclude that
\begin{equation*}
\|v^\eps_k - \frac{c_1}{2\pi}e_k\|^2_{L^2(Q_{\eps,i})} \le C\eps^2|\log \eta|^{-1}.
\end{equation*}
Again, this estimate is uniform for all cubes $Q_{\eps,i}$'s, and there are $O(\eps^{-2})$ many of them, and we hence conclude that
\begin{equation*}
\|v^\eps_k - \frac{c_1}{2\pi}e_k\|_{L^2(Q_{\eps,i})} \le C|\log \eta|^{-\frac12}.
\end{equation*}
This completes the proof.
\end{proof}

\section{A unified proof for qualitative homogenization}
\label{sec:qualh}

In this section, we prove Theorem \ref{thm:qualh} with a unified method. In view of the estimates \eqref{eq:uepsbdd} and \eqref{eq:uepsbddp}, the sequence $\{\tilde u^\eps/(1\wedge\sigma_\eps^2)\}$ and $\{\nabla \tilde u^\eps/(1\wedge \sigma_\eps)\}$ are uniformly bounded in $L^2$; here $a \wedge b$ means $\min\{a,b\}$.

Hence, in the super-critical setting, we can extract a subsequence that is still denoted by $\eps \to 0$, along which
\begin{equation*}
\frac{\tilde u^\eps}{\sigma_\eps^2} \to u \qquad \text{ weakly in } L^2(D).
\end{equation*}
In the critical and sub-critical settings, we can extract a subsequence along which
\begin{equation*}
\tilde u^\eps \to u \qquad \text{ weakly in } H^1_0(D).
\end{equation*}
The qualitative homogenization results amount to determining the limit $u$ and showing that the whole sequence converges.

In this section, we establish those results using the standard method of oscillating test functions. To start, let $\varphi \in C^\infty_c(D;\R)$ be a real valued test function with compact support in $D$. Along an aforementioned converging subsequence of $u^\eps$, test $\varphi v^\eps_k$, which belongs to $H^1_0(D^\eps)$, against the equation \eqref{eq:hetlame}, we get
\begin{equation*}
\begin{aligned}
\int_D \mu \varphi \nabla \tilde u^\eps : \nabla v^\eps_k + (\lambda+\mu) \varphi (\div \tilde u^\eps)(\div v^\eps_k) &+ \int_D \mu \nabla \tilde u^\eps : (\nabla \varphi \otimes v^\eps_k) + (\lambda + \mu) (\div \tilde u^\eps)(\nabla \varphi \cdot v^\eps_k)\\
&= \int_D \varphi (f\cdot v^\eps_k).
\end{aligned}
\end{equation*} 
On the other hand,  since $\varphi u^\eps$ belongs to $H^1(\eps\R^d_f)$, we can test it against equation \eqref{eq:vepseq}, and obtain
\begin{equation*}
\begin{aligned}
\int_D \mu \varphi \nabla v^\eps_k : \nabla \tilde u^\eps  + (\lambda+\mu) \varphi (\div v^\eps_k)(\div \tilde u^\eps) &+ \int_D \mu (\nabla\varphi \otimes \tilde u^\eps ): \nabla v^\eps_k+ (\lambda + \mu) (\nabla \varphi \cdot \tilde u^\eps)(\div v^\eps_k)\\
&= \int_D \varphi (e_k\cdot \frac{\tilde u^\eps}{\sigma^2_\eps}).
\end{aligned}
\end{equation*} 
Take the difference between those equations, we get the key identity
\begin{equation}
\label{eq:wibp}
\begin{aligned}
\int_D \mu \nabla \tilde u^\eps : (\nabla \varphi \otimes v^\eps_k) &+ \int_D (\lambda + \mu) (\div \tilde u^\eps)(\nabla \varphi \cdot v^\eps_k) - \int_D \mu (\nabla\varphi \otimes \tilde u^\eps ): \nabla v^\eps_k\\
&- \int_D (\lambda + \mu) (\nabla \varphi \cdot \tilde u^\eps)(\div  v^\eps_k) =\int_D \varphi (f\cdot v^\eps_k- e_k\cdot \frac{\tilde u^\eps}{\sigma^2_\eps})
\end{aligned}
\end{equation}
Let us name the five integrals in the identity above by $I_1, I_2, \dots, I_5$ in order of their appearance. We need to find their limits in each asymptotic regimes for $\sigma_\eps$. The trick of the procedure above is, the integral terms that involve products of a pair of weakly converging quantities, namely the integral of $\nabla \tilde u^\eps : \nabla v^\eps$, are all eliminated, and integrals that survived in \eqref{eq:wibp} only involve products of a weakly converging function with strongly converging ones. 

\subsection{The super-critical setting} We only address the dilute case. In this setting, $\sigma_\eps$ converges to zero, and along the converging subsequence, $\tilde u^\eps/\sigma^2_\eps \to u$ weakly in $L^2$, and $\nabla \tilde u^\eps$ is of order $O(\sigma_\eps)$. Inspecting the integrals in \eqref{eq:wibp}, we find, using \eqref{eq:uepsbddp}, \eqref{eq:vgradbdd} and \eqref{eq:vstrong}, as $\eps \to 0$,
\begin{equation*}
\begin{aligned}
|I_1| &\le C\|\nabla \varphi\|_{L^\infty}\|\nabla \tilde u^\eps\|_{L^2}\|v^\eps_k\|_{L^2} \le C\sigma_\eps \to 0,\\
|I_2| &\le C\|\nabla\tilde u^\eps\|_{L^2} \|\nabla \varphi\|_{L^\infty}\|v^\eps_k\|_{L^2} \le C\sigma_\eps \to 0,\\
|I_3| + |I_4| &\le C\|\tilde u^\eps\|_{L^2} \|\varphi\|_{L^\infty}\|\nabla v^\eps_k\|_{L^2} \le C\sigma_\eps,\\
I_5 &\to \int_D \varphi(M^{-1}f - u)\cdot e_k.
\end{aligned}
\end{equation*}
In the limit of $I_5$, we also used the fact that $M^{-1}$ is symmetric. As a result, passing $\eps \to 0$ in \eqref{eq:wibp}, we get
\begin{equation*}
\int_D \varphi(M^{-1}f - u)\cdot e_k = 0,
\end{equation*}
which holds for all test function $\varphi$ and for all $k = 1,\dots,d$. It follows that
\begin{equation*}
u = M^{-1}f.
\end{equation*}
The above formula dictates the possible limit of $\tilde u^\eps/\sigma^2_\eps$. Hence, the whole sequence converges to this $u$. This completes the proof in the super-critical setting.

\subsection{The critical setting}   

In this setting, $\sigma_\eps \to \sigma_0$ for some $\sigma_0 \in (0,\infty)$, and along a converging subsequence, $\tilde u^\eps \to u$ weakly in $H^1_0(D)$. By the Rellich's lemma, we also have $\tilde u^\eps \to u$ strongly in $L^2$, and $\nabla \tilde u^\eps \to \nabla u$ weakly in $L^2$.

We examine the integrals in \eqref{eq:wibp}, and by using the weak convergence of $\nabla \tilde u^\eps$ and $\nabla v^\eps_k$, together with the strong convergence of $\tilde u^\eps$ and $v^\eps_k$, we deduce that, by sending $\eps \to 0$,
\begin{equation*}
\begin{aligned}
\int_D \mu \nabla u : (\nabla \varphi\otimes M^{-1}e_k)  + \int_D (\lambda+\mu)(\div u)(M^{-1}\nabla \varphi \cdot e_k) = \int_D \varphi(M^{-1}f - \frac{u}{\sigma_0^2})\cdot e_k.
\end{aligned}
\end{equation*}
We emphasize that the limit of $I_3$ and $I_4$ vanishes because $\nabla v^\eps_k$ weakly converges to zero. Using integration by parts, we can recast the above as
\begin{equation*}
-\int_D \varphi \left(\mu \Delta u + (\lambda+\mu)\nabla \div u\right) \cdot M^{-1}e_k =  \int_D \varphi(M^{-1}f - \frac{u}{\sigma_0^2})\cdot e_k.
\end{equation*}
Since $M^{-1}$ is symmetric, we can move $M^{-1}$ on the left hand side to the front of $\cL^{\lambda,\mu}[u]$. Then we multiply $M$ on both sides to get
\begin{equation*}
-\int_D \varphi \left(\mu \Delta u + (\lambda+\mu)\nabla \div u\right) \cdot e_k =  \int_D \varphi(f - \frac{Mu}{\sigma_0^2})\cdot e_k.
\end{equation*}
This holds for all test functions $\varphi$ and for all $k \in \{1,\dots,d\}$. We conclude that
\begin{equation*}
-\cL^{\lambda,\mu} u + \frac{M}{\sigma_0^2}u = f \qquad \text{in distribution in $D$.}
\end{equation*}
Since we already have $u \in H^1_0(D)$, $u$ is the unique weak solution to \eqref{eq:homcr}. This determines the possible limit of $\tilde u^\eps$ uniquely and, hence, the whole sequence converges.

\subsection{The sub-critical setting} In this setting, $\sigma_\eps \to \infty$, and along a converging subsequence, $\tilde u^\eps \to u$ weakly in $H^1_0(D)$. We can argue almost exactly as in the previous setting. We point out two differences. Firstly, the term in $I_5$ involving $\sigma_\eps$ vanishes in the limit. Secondly, $I_3$ and $I_4$ vanish in the limit for a reason different from the previous settings, namely due to \eqref{eq:vgradbdd}. It follows that the only limit $u$ for $\tilde u^\eps$ is given by the solution to
\begin{equation*}
-\cL^{\lambda,\mu} [u] = f, \qquad \text{in } D
\end{equation*}
with $u \in H^1_0$. As a result, the whole sequence converges to this limit.

We also emphasize that our approach is uniform with respect to all the asymptotic regimes of $\sigma_\eps$ and for all $d\ge 2$. The necessary modifications for $d = 2$ is encoded in  the asymptotic analysis of $v^\eps_k$'s, and the matrix $M$ is defined accordingly.


\section{Correctors and error estimates}
\label{sec:quanth}

Another feature of our approach is that the method yields natural correctors and error estimates, with inspirations from the informal two-scale expansion method. We prove Theorem \ref{thm:quanh} in this section.

\subsection{Super-critical setting} We only consider the dilute case. For the super-critical setting, $\sigma_\eps$ is a small number. By rescaling the corrector suggested by the formal two-scale expansion, we should consider the discrepancy function
\begin{equation*}
\zeta^\eps = \frac{u^\eps}{\sigma^2_\eps} - f^k(x)v^\eps_k(x).
\end{equation*}
Note that $\xi^\eps \in H^1_0(D^\eps)$ and we set its value as zero inside the holes. Direct computation shows that
\begin{equation*}
 \begin{aligned}
 -\cL^{\lambda,\mu} [\zeta^\eps] =  \mu\left[v^\eps_k \Delta f^k + 2 \partial_\ell f^k \partial_\ell v^\eps_k\right] &+ (\lambda+\mu)\left[\partial_i(v^\eps_k)^\ell \partial_\ell f^k +  (\partial^2 f^k)v^\eps_k \right]\\
 &+ (\lambda+\mu)(\div v^\eps_k) \nabla f^k, \qquad \text{in } D^\eps.
\end{aligned}
\end{equation*} 
Here $\partial^2 f^k$ denotes the second order derivative matrix of $f^k$. We assume that $f \in W^{2,d}(D)$ so that the right hand side is an $L^2$ function and the equation is satisfied in the weak sense. Test $\zeta^\eps$ against this equation, we obtain
\begin{equation}
\label{eq:quant-1}
\begin{aligned}
&\mu\|\nabla\zeta^\eps\|^2_{L^2} + (\lambda+\mu)\|\div \zeta^\eps\|^2_{L^2} = \int_D \mu \zeta^\eps \cdot v^\eps_k \Delta f^k + (\lambda+\mu)\zeta^\eps \cdot [ (\partial^2  f^k)v^\eps_k]\\
&\qquad + (\lambda+\mu)\left[\int_D \div( v^\eps_k - M^{-1}e_k)\zeta^\eps \cdot \nabla f^k  + \int_D (\zeta^\eps)^i \partial_i (v^\eps_k - M^{-1}e_k)^\ell \partial_\ell f^k \right]\\
&\qquad  +2\mu\int_D \zeta^\eps \cdot[\partial_\ell f^k (\partial_\ell (v^\eps_k - M^{-1}e_k))].
\end{aligned}
\end{equation}
Let us label the four integrals on the right hand side as $I_1,\dots,I_4$. Note that in $I_2, I_3$ and $I_4$ we inserted the constant $M^{-1}e_k$ inside some derivatives without violating the equation. Assume $d\ge 3$ for the moment and set $p = 2d/(d-2)$. The first integral is then controlled by
\begin{equation}
\label{eq:quant-2}
|I_1| \le C\|\partial^2 f\|_{L^d}\|v^\eps\|_{L^p}\|\zeta^\eps\|_{L^2} \le C\sigma_\eps \|\partial^2 f\|_{L^d}\|v^\eps\|_{L^p}\|\nabla\zeta^\eps\|_{L^2}.
\end{equation}
For the rest of the integrals, we need to perform an integration by parts (in $D^\eps$, and, note that $\zeta^\eps \in H^1_0(D^\eps)$) first to shift the derivatives off $v^\eps$ terms. For $I_2$, the following holds.
\begin{equation*}
\begin{aligned}
I_2 &= -\int_D (v^\eps_k - M^{-1}e_k)^\ell \left( \partial_\ell (\zeta^\eps)^i \partial_i f^k + (\zeta^\eps)^i \partial_i \partial_\ell f^k\right)\\
&= -\int_D (v^\eps_k - M^{-1}e_k) \cdot (\nabla \zeta^\eps)^T \nabla f^k + (v^\eps_k - M^{-1}e_k)\cdot(\partial^2 f^k)\zeta^\eps.
\end{aligned}
\end{equation*}
We deduce that
\begin{equation}
\label{eq:quant-3}
\begin{aligned}
|I_2| \le& \sum_k \|v^\eps_k - M^{-1}e_k\|_{L^p}\left(\|\nabla f\|_{L^d}\|\nabla \zeta^\eps\|_{L^2} + \|\partial^2 f\|_{L^d} \|\zeta^\eps\|_{L^2}\right)\\
&\le C\eta^{\frac{d-2}{2}}(1+\sigma_\eps)\|f\|_{W^{2,d}}\|\nabla \zeta^\eps\|_{L^2}.
\end{aligned}
\end{equation}
The integrals $I_3$ and $I_4$ can be treated in the same manner and they satisfy the same bound above. Using \eqref{eq:quant-2} and \eqref{eq:quant-3} in \eqref{eq:quant-1}, we finally get
\begin{equation*}
\|\nabla \zeta^\eps\|_{L^2} \le C\left(\sigma_\eps + \eta^{\frac{d-2}{2}}\right)\|f\|_{W^{2,d}}.
\end{equation*}
By the Poincar\'e inequality, we also have
\begin{equation*}
\|\zeta^\eps\|_{L^2} \le C\left(\sigma^2_\eps + \eps \right)\|f\|_{W^{2,d}}.
\end{equation*}
This is the desired estimate for $d \ge 3$.

In the case of $d=2$, we only need to replace $p$ by $2$ and use $W^{2,\infty}$ control on $f$. The arguments above then follow and we get
\begin{equation*}
\|\nabla \zeta^\eps\|_{L^2} \le C\left(\sigma_\eps + |\log \eta|^{-\frac12}\right)\|f\|_{W^{2,d}},
\end{equation*} 
and
\begin{equation*}
\|\zeta^\eps\|_{L^2} \le C\left(\sigma^2_\eps + \eps \right)\|f\|_{W^{2,d}}.
\end{equation*}

\subsection{The critical setting} In this setting, $\sigma_\eps$ is of order one, and $\sigma_\eps \to \sigma_0$ as $\eps \to 0$. We consider the discrepancy function
\begin{equation*}
\zeta^\eps = u^\eps - \sigma^2_\eps (\frac{M}{\sigma_0^2}u)^k v^\eps_k.
\end{equation*}
We emphasize that $\zeta^\eps \in H^1_0(D^\eps)$. This can be seen as an analog of the discrepency used in the previous setting, except that  we replace $f$ by $\frac{M}{\sigma_0}u$. Direct computation then shows
\begin{equation*}
\begin{aligned}
-\cL^{\lambda,\mu} &[\zeta^\eps] = f - \frac{M}{\sigma_0^2}u + \mu \frac{\sigma_\eps^2}{\sigma^2_0}\left[v^\eps_k \Delta (Mu)^k + 2 \partial_\ell (Mu)^k \partial_\ell v^\eps_k\right] \\
&+ (\lambda+\mu)\frac{\sigma_\eps^2}{\sigma^2_0}\left[\partial_i(v^\eps_k)^\ell \partial_\ell (Mu)^k +  (\partial^2 (Mu)^k)v^\eps_k \right]
 + (\lambda+\mu)\frac{\sigma_\eps^2}{\sigma^2_0}(\div v^\eps_k) \nabla (Mu)^k
 \end{aligned}
  \qquad \text{in } D^\eps.
\end{equation*}
Using \eqref{eq:homcr} and by some algebraic manipulations, we can rewrite the above as
\begin{equation*}
\begin{aligned}
-\frac{\sigma_0^2}{\sigma^2_\eps} \cL^{\lambda,\mu} [\zeta^\eps] =& \left(\frac{\sigma^2_0}{\sigma^2_\eps}-1\right)\cL^{\lambda,\mu}u +  \mu (\Delta (Mu)^k)(v^\eps_k-M^{-1}e_k)  +  (\lambda+\mu) (\partial^2 (Mu)^k)(v^\eps_k - M^{-1}\cdot e_k) \\
&+ 2\mu \partial_\ell (Mu)^k \partial_\ell v^\eps_k + (\lambda+\mu)\left[\partial_i(v^\eps_k)^\ell \partial_\ell (Mu)^k 
 + (\div v^\eps_k) \nabla (Mu)^k\right] \qquad \text{in } D^\eps.
 \end{aligned}
\end{equation*}
After replacing $\nabla v^\eps_k$ by $\nabla(v^\eps_k - M^{-1}e_k)$, we test $\zeta^\eps$ against the equation and obtain
\begin{equation}
\label{eq:quant-4}
\begin{aligned}
&\frac1C \|\nabla\zeta^\eps\|^2_{L^2} \le \int_D \mu \zeta^\eps \cdot (v^\eps_k-M^{-1}e_k) \Delta (Mu)^k + (\lambda+\mu)\zeta^\eps \cdot [ (\partial^2  (Mu)^k)(v^\eps_k-M^{-1}e_k)]\\
&\qquad + (\lambda+\mu)\left[\int_D \div( v^\eps_k - M^{-1}e_k)\zeta^\eps \cdot \nabla (Mu)^k  + \int_D (\zeta^\eps)^i \partial_i (v^\eps_k - M^{-1}e_k)^\ell \partial_\ell (Mu)^k \right]\\
&\qquad  +2\mu\int_D \zeta^\eps \cdot[\partial_\ell (Mu)^k (\partial_\ell (v^\eps_k - M^{-1}e_k))] + |\sigma^2_\eps - \sigma^2_0|\sigma^{-2}_\eps \left|\int_{D^\eps} \zeta^\eps \cdot \cL^{\lambda,\mu}u\right|.
\end{aligned}
\end{equation}
The first four integrals on the right hand side of the inequality above can be controlled as before, and, for $d\ge 3$, they are bounded by
\begin{equation*}
C\sum_k \|v^\eps_k - M^{-1}e_k\|_{L^p} \|Mu\|_{W^{2,d}}\|\nabla \zeta^\eps\|_{L^2} \le C\eta^{\frac{d-2}{2}}\|\nabla \zeta^\eps\|_{L^2}.
\end{equation*}
The last integral can be recognized as the bilinear form associated to the Lam\'e system evaluated at the pair $(\zeta^\eps,u)$, and hence the last term in \eqref{eq:quant-4} is bounded by
\begin{equation*}
C|\sigma^2_\eps-\sigma^2_0|\|\nabla u\|_{L^2}\|\nabla \zeta\|_{L^2}.
\end{equation*}
Combine the above estimates, we obtain
\begin{equation*}
\|\nabla \zeta^\eta\|_{L^2} + \|\zeta\|_{L^2} \le C\left(\eps + |\sigma^2_\eps-\sigma^2_0|\right)\|u\|_{W^{2,d}}.
\end{equation*}
For $d=2$, the above estimate still holds if we use $W^{2,\infty}$ estimate for $u$ instead.

\subsection{The sub-critical setting} In this setting, $\sigma_\eps \to \infty$ and hence $\sigma^{-1}_\eps$ is a small number. We consider the discrepancy function
\begin{equation*}
\zeta^\eps = u^\eps - (Mu)^k v^\eps_k,
\end{equation*}
which belongs to $H^1_0(D^\eps)$, and we set its value as zero in the holes. Computation shows
\begin{equation*}
\begin{aligned}
-\cL^{\lambda,\mu} &[\zeta^\eps] = f - \frac{Mu}{\sigma^2_\eps} + \mu \left[v^\eps_k \Delta (Mu)^k + 2 \partial_\ell (Mu)^k \partial_\ell v^\eps_k\right] \\
&+ (\lambda+\mu)\left[\partial_i(v^\eps_k)^\ell \partial_\ell (Mu)^k +  (\partial^2 (Mu)^k)v^\eps_k \right]
 + (\lambda+\mu)(\div v^\eps_k) \nabla (Mu)^k
 \end{aligned}
  \qquad \text{in } D^\eps.
\end{equation*}
Using the equation satisfied by $u$, we rewrite the above as
\begin{equation*}
\begin{aligned}
-\cL^{\lambda,\mu} [\zeta^\eps] =& -\frac{Mu}{\sigma^2_\eps} +  \mu (\Delta (Mu)^k)(v^\eps_k-M^{-1}e_k)  +  (\lambda+\mu) (\partial^2 (Mu)^k)(v^\eps_k - M^{-1}\cdot e_k) \\
&+ 2\mu \partial_\ell (Mu)^k \partial_\ell (v^\eps_k-M^{-1}e_k) + (\lambda+\mu)\partial_i(v^\eps_k - M^{-1}e_k)^\ell \partial_\ell (Mu)^k\\
&+ (\lambda+\mu)\div (v^\eps_k-M^{-1}e_k) \nabla (Mu)^k
 \end{aligned}
  \qquad \text{in } D^\eps.
\end{equation*}
Test $\zeta^\eps$ against this equation, we obtain
\begin{equation}
\label{eq:quant-5}
\begin{aligned}
&\mu \|\nabla\zeta^\eps\|^2_{L^2} \le \int_D \mu \zeta^\eps \cdot (v^\eps_k-M^{-1}e_k) \Delta (Mu)^k + (\lambda+\mu)\zeta^\eps \cdot [ (\partial^2  (Mu)^k)(v^\eps_k-M^{-1}e_k)]\\
&\qquad +\int_D  (\lambda+\mu)\left[\div (v^\eps_k-M^{-1}e_k)\zeta^\eps \cdot \nabla (Mu)^k  +(\zeta^\eps)^i (\partial_i (v^\eps_k-M^{-1}e_k)^\ell) \partial_\ell (Mu)^k \right]\\ &\qquad + 2\mu\int_D \zeta^\eps \cdot[\partial_\ell (Mu)^k (\partial_\ell (v^\eps_k-M^{-1}e_k))]- \frac1{\sigma^2_\eps} \int_D \zeta^\eps\cdot Mu.
\end{aligned}
\end{equation}
The first three integrals on the right hand side can be analyzed as before and, for $d\ge 3$, they are bounded by
\begin{equation*}
C\sum_k \|v^\eps_k - M^{-1}e_k\|_{L^p}\|Mu\|_{W^{2,d}}\|\nabla \zeta^\eps\|_{L^2} \le C\eta^{\frac{d-2}{2}}\|Mu\|_{W^{2,d}}\|\nabla \zeta^\eps\|_{L^2}.
\end{equation*}
Note that we also use the usual Poincar\'e inequality on $D$ as $\zeta^\eps \in H^1_0(D)$. Using H\"older inequality and the usual Poinca\'e inequality, we can bound the last integral from above by
\begin{equation*}
C\sigma^{-2}_\eps \|Mu\|_{L^2}\|\nabla \zeta^\eps\|.
\end{equation*}
Combine those results, we deduce that, for $d\ge 3$,
\begin{equation*}
\|\nabla \zeta^\eps\|_{L^2} + \|\zeta^\eps\|_{L^2} \le \left(\eta^{\frac{d-2}{2}} + \sigma_\eps^{-2}\right)\|u\|_{W^{2,d}}.
\end{equation*}
For $d=2$, this estimate holds with $\eta^{\frac{d-2}{2}}$ replaced by $|\log \eta|^{-\frac12}$ and with $W^{2,d}$ replaced by $W^{2,\infty}$.

\section*{acknowledgements}
The author would like to thank Xin Fu for helpful discussions on layer potentials for Lam\'e systems.

\appendix

\section{Some useful lemmas}

The following results are very helpful and have been used in the main parts of the paper. 

\begin{theorem}[A Poincar\'e inequality]\label{thm:poincare} Let $d\ge 2$. Let $r, R$ be two positive real numbers and $r < R$. Then there exists a constant $C > 0$ that depends only on the dimension $d$, such that for any $u \in H^1(B_R(0))$ satisfying $u = 0$ in $B_r(0)$, we have
\begin{equation}
\label{eq:poincare}
\|u\|_{L^2(B_R)} \le \begin{cases}
CR(\frac{r}{R})^{-\frac{d-2}{2}} \|\nabla u\|_{L^2(B_R)}, &\qquad d\ge 3,\\
CR|\log(\frac{r}{R})|^\frac12 \|\nabla u\|_{L^2(B_R)}, &\qquad d=2.
\end{cases}
\end{equation}
\end{theorem}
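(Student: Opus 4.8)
The plan is to reduce to the unit ball by scaling and then run the classical one–dimensional argument along rays through the origin. Setting $v(x) := u(Rx)$ for $x \in B_1(0)$ and $\rho := r/R \in (0,1)$, the change of variables $y = Rx$ gives $v \in H^1(B_1)$, $v = 0$ on $B_\rho(0)$, $\|v\|_{L^2(B_1)}^2 = R^{-d}\|u\|_{L^2(B_R)}^2$, and $\|\nabla v\|_{L^2(B_1)}^2 = R^{2-d}\|\nabla u\|_{L^2(B_R)}^2$. Hence it suffices to establish
\[
\|v\|_{L^2(B_1)} \le C\,\omega(\rho)\,\|\nabla v\|_{L^2(B_1)}, \qquad \omega(\rho) := \begin{cases} \rho^{-\frac{d-2}{2}}, & d\ge 3,\\ |\log\rho|^{\frac12}, & d=2,\end{cases}
\]
with $C = C(d)$, and then read off the stated inequality by tracking the powers of $R$.

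To prove this I would pass to polar coordinates $x = s\theta$, $s>0$, $\theta \in \mathbb{S}^{d-1}$. By Fubini together with the absolute–continuity–on–lines characterization of $H^1$, for $\mathcal{H}^{d-1}$-a.e.\ $\theta$ the radial slice $t \mapsto v(t\theta)$ is absolutely continuous on $[\rho,1]$ with $\partial_t v(t\theta) = \theta \cdot \nabla v(t\theta)$, whence $|\partial_t v(t\theta)| \le |\nabla v(t\theta)|$. Since $v(\rho\theta) = 0$, the fundamental theorem of calculus followed by Cauchy--Schwarz on $[\rho,s]$ (inserting the weights $t^{\pm(d-1)/2}$ and then enlarging the $t$-domain to $[\rho,1]$) yields, for every $s \in [\rho,1]$,
\[
|v(s\theta)|^2 = \Bigl|\int_\rho^s \partial_t v(t\theta)\,dt\Bigr|^2 \le \Bigl(\int_\rho^1 t^{1-d}\,dt\Bigr)\Bigl(\int_\rho^1 |\nabla v(t\theta)|^2\,t^{d-1}\,dt\Bigr).
\]

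I would then integrate this pointwise bound in $s^{d-1}\,ds\,d\theta$ over $[\rho,1]\times \mathbb{S}^{d-1}$, using that $\|v\|_{L^2(B_1)}^2 = \int_{\mathbb{S}^{d-1}}\int_\rho^1 |v(s\theta)|^2 s^{d-1}\,ds\,d\theta$ because $v \equiv 0$ on $B_\rho$. The factor depending on $t$ decouples from the $s$-integration, $\int_\rho^1 s^{d-1}\,ds \le \tfrac1d$, and $\int_{\mathbb{S}^{d-1}}\int_\rho^1 |\nabla v(t\theta)|^2 t^{d-1}\,dt\,d\theta = \|\nabla v\|_{L^2(B_1\setminus B_\rho)}^2 \le \|\nabla v\|_{L^2(B_1)}^2$, so
\[
\|v\|_{L^2(B_1)}^2 \le \frac1d\Bigl(\int_\rho^1 t^{1-d}\,dt\Bigr)\|\nabla v\|_{L^2(B_1)}^2 .
\]
It remains to evaluate the radial integral: for $d \ge 3$, $\int_\rho^1 t^{1-d}\,dt = \frac{\rho^{-(d-2)}-1}{d-2} \le \frac{1}{d-2}\,\rho^{-(d-2)}$, while for $d=2$, $\int_\rho^1 t^{-1}\,dt = |\log\rho|$. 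Taking square roots and undoing the scaling completes the proof.

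The only delicate point is the use of the fundamental theorem of calculus directly on $\mathcal{H}^{d-1}$-a.e.\ ray, which sidesteps a separate density step; this is standard via the ACL property of Sobolev functions. Alternatively, one first proves the estimate for $v \in C^1(\overline{B_1})$ vanishing in a neighborhood of the origin — such functions being dense in the closed subspace $\{v \in H^1(B_1): v = 0 \text{ on } B_\rho\}$ by a routine cutoff-and-mollification argument that exploits $\|v\|_{L^2(B_{\rho(1+\delta)}\setminus B_\rho)} \lesssim \rho\delta\,\|\nabla v\|_{L^2}$ — and then passes to the limit. There is no substantial obstacle here; the entire content is the elementary radial integration above, with the dimension-dependent blow-up coming solely from $\int_\rho^1 t^{1-d}\,dt$.
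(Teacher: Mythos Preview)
Your argument is correct and is the standard radial--integration proof of this inequality. The paper itself does not supply a proof but only cites \cite[Lemma 3.4.1]{Allaire91-2} and \cite[Theorem A.1]{Jing20}; the argument found there is essentially the one you have written (scale to the unit ball, use the fundamental theorem of calculus along rays from the origin, apply Cauchy--Schwarz with the weight $t^{d-1}$, and read off the constant from $\int_\rho^1 t^{1-d}\,dt$). So there is nothing to contrast: you have filled in precisely the proof the paper omits.

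One small remark on the technical point you flag: the ACL characterization of $H^1$ as stated gives absolute continuity on a.e.\ line \emph{parallel to the coordinate axes}, not directly on a.e.\ ray through the origin. One can either pull back via the polar-coordinate diffeomorphism (which is smooth away from the origin, hence harmless on $B_1\setminus B_\rho$) or, more simply, take the density route you sketch in the final paragraph. Either way the issue is routine, as you say.
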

We refer to \cite[Lemma 3.4.1]{Allaire91-2} or \cite[Theorem A.1]{Jing20} for the proof. This inequality accounts for the various asymptotic regimes for \eqref{eq:hetlame} depending on the relative smallness of $\eta$ with respect to $\eps$. Clearly, if we change one or both of the balls to cubes, the above inequality still holds. In particular, it can be applied on the $\eps$-cubes, $\eps(z+\ol Y_f)$, $z\in \Z^d$, which form $\eps \R^d_f$ and $D^\eps$. 

\begin{lemma}\label{lem:semifred}
Suppose $H$ is a Hilbert space and $\mathcal{T} : H\to H$ is a bounded linear operator on $H$ and $\mathcal{T}^*$ is the adjoint operator. Suppose $\mathcal{T}$ has closed range, $\ker(\mathcal{T})$ has finite dimension $k$, and, moreover, $\mathcal{T} - \mathcal{T}^*$ is compact. Then $\dim \ker(\mathcal{T}^*) = k$ as well.
\end{lemma}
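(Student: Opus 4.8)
The plan is to identify $\mathcal{T}$ as a Fredholm operator of index zero, by combining two classical facts: passing to the adjoint reverses the index, while a compact perturbation preserves it. Since $\mathcal{T}^* = \mathcal{T} - (\mathcal{T}-\mathcal{T}^*)$ is a compact perturbation of $\mathcal{T}$, these two facts together will force $\operatorname{ind}\mathcal{T} = -\operatorname{ind}\mathcal{T}$, hence $\operatorname{ind}\mathcal{T}=0$, which is exactly the assertion $\dim\ker\mathcal{T}^* = \dim\ker\mathcal{T} = k$.

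First I would record the standard consequences of $\ran\mathcal{T}$ being closed: by the closed range theorem $\ran\mathcal{T}^*$ is also closed, and one has the orthogonality relations $\ker\mathcal{T}^* = (\ran\mathcal{T})^\perp$ and $\ker\mathcal{T} = (\ran\mathcal{T}^*)^\perp$. In particular $\dim\ker\mathcal{T}^* = \operatorname{codim}\ran\mathcal{T}$ and $\operatorname{codim}\ran\mathcal{T}^* = \dim\ker\mathcal{T} = k$. Thus the whole statement reduces to showing $\operatorname{codim}\ran\mathcal{T} = k$, i.e. that $\mathcal{T}$ is Fredholm with index $0$.

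The one genuinely non-formal point is the finiteness of $\dim\ker\mathcal{T}^*$ (equivalently, stability of upper semi-Fredholmness under compact perturbation). I would argue it directly rather than cite it: if $\ker\mathcal{T}^*$ were infinite-dimensional, choose an orthonormal sequence $(x_n)$ in it, so $x_n \rightharpoonup 0$ weakly. Since $\mathcal{T} x_n = (\mathcal{T}-\mathcal{T}^*)x_n$ and $\mathcal{T}-\mathcal{T}^*$ is compact, $\mathcal{T} x_n \to 0$ strongly. Write $x_n = y_n + z_n$ with $y_n \in (\ker\mathcal{T})^\perp$ and $z_n \in \ker\mathcal{T}$; because $\ran\mathcal{T}$ is closed, $\mathcal{T}$ is bounded below on $(\ker\mathcal{T})^\perp$ (open mapping theorem), so $c\|y_n\| \le \|\mathcal{T} y_n\| = \|\mathcal{T} x_n\| \to 0$, giving $y_n \to 0$ and hence $\|z_n\| \to 1$. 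But $(z_n)$ lies in the finite-dimensional space $\ker\mathcal{T}$ and $z_n \rightharpoonup 0$, so $z_n \to 0$, contradicting $\|z_n\|\to 1$. Hence $\dim\ker\mathcal{T}^* < \infty$, and combined with closedness of $\ran\mathcal{T}$ this makes $\mathcal{T}$ (and $\mathcal{T}^*$) Fredholm.

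With $\mathcal{T}$ known to be Fredholm, I would finish by the index bookkeeping: from the relations above $\operatorname{ind}\mathcal{T} = \dim\ker\mathcal{T} - \operatorname{codim}\ran\mathcal{T} = k - \dim\ker\mathcal{T}^* = -\operatorname{ind}\mathcal{T}^*$, while the invariance of the Fredholm index under compact perturbations, applied to $\mathcal{T}^* = \mathcal{T} - (\mathcal{T}-\mathcal{T}^*)$, gives $\operatorname{ind}\mathcal{T}^* = \operatorname{ind}\mathcal{T}$. Therefore $\operatorname{ind}\mathcal{T}=0$, i.e. $\dim\ker\mathcal{T}^* = \operatorname{codim}\ran\mathcal{T} = k$. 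The main obstacle is essentially only the finiteness step in the previous paragraph (ensuring the index is defined); once $\mathcal{T}$ is Fredholm, the conclusion is immediate from the two cited properties of the index.
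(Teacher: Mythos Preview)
Your proposal is correct and follows essentially the approach the paper itself sketches: the paper does not give a detailed argument but remarks that the result follows from $\mathcal{T}$ being semi-Fredholm and from the preservation of semi-Fredholmness and the index under compact perturbations, which is precisely your strategy. Your direct verification that $\dim\ker\mathcal{T}^*<\infty$ (via the open-mapping lower bound on $(\ker\mathcal{T})^\perp$ and the finite-dimensionality of $\ker\mathcal{T}$) fills in the one step the paper leaves as a citation, and the index bookkeeping $\operatorname{ind}\mathcal{T}=-\operatorname{ind}\mathcal{T}^*=\operatorname{ind}\mathcal{T}^*$ is exactly the intended conclusion.
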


This is rephrased from Lemma 2.3 of \cite{FabKenVer}. It can be proved directly, or, by using the fact that $\mathcal{T}$ is semi-Fredholm and that semi-Fredholmness and the index of such an operator are preserved by compact perturbations.
%
%



\end{document}